\definecolor{MyDarkBlue}{rgb}{0,0.08,0.50}
\definecolor{BrickRed}{rgb}{0.65,0.08,0}
\def\R{{\mathbb{R}}}
\def\Z{{\mathbb{Z}}}
\newcommand{\sT}{{\scriptscriptstyle {\mathbb T}}}
\newcommand{\sZ}{{\scriptscriptstyle {\mathbb Z}}}
\newcommand{\Qn}[1]{Q_{#1}}
\newcommand{\torus}{{\mathbb T}^d_r}
\newcommand{\LC}[1]{{\rm LC}_{#1}}
\newcommand{\e}{{\rm e}}
\newcommand{\eqn}[1]{\begin{equation} #1 \end{equation}}
\newcommand{\eqan}[1]{\begin{align} #1 \end{align}}
\newcommand{\sss}{\scriptscriptstyle}
\newcommand{\conn}{\longleftrightarrow}
\newcommand{\expec}{{\mathbb E}}
\theoremstyle{usual}
\newtheorem{theorem}{Theorem}[section]
\newtheorem{lemma}{Lemma}[section]
\newtheorem{proposition}{Proposition}[section]
\newtheoremstyle{likedef}
  {}%
  {}%
  {}%
  {\parindent}%
  {\bfseries}%
  {.}%
  {.5em}%
  {}%
\theoremstyle{likedef}
\newtheorem{remark}{Remark}
\numberwithin{equation}{section}
\begin{document}

\title{Cycle structure of percolation on high-dimensional tori}

\author{
Remco van der Hofstad
\thanks{Department of Mathematics and
Computer Science, Eindhoven University of Technology, P.O.\ Box 513,
5600 MB Eindhoven, The Netherlands. E-mail: {\tt
rhofstad@win.tue.nl}}
\and
Art\"{e}m Sapozhnikov
\thanks{Max-Planck Institute for Mathematics in the Sciences, 
Inselstrasse 22, 04103 Leipzig, Germany. 
E-mail: {\tt artem.sapozhnikov@mis.mpg.de}}
}

\maketitle

\footnotetext{MSC2000: 05C80, 60K35, 82B43.}
\footnotetext{Keywords: Random graph; phase transition; critical behavior; percolation; torus; cycle structure.}

\begin{abstract}
In the past years, many properties of the largest connected components of critical percolation
on the high-dimensional torus, such as their sizes and diameter, have
been established. The order of magnitude of these quantities equals the one
for percolation on the complete graph or Erd\H{o}s-R\'enyi random graph, raising
the question whether the scaling limits of the largest
connected components, as identified by Aldous (1997), are also equal.

In this paper, we investigate the \emph{cycle structure} of
the largest critical components
for high-dimensional percolation on the torus $\{-\lfloor r/2\rfloor, \ldots, \lceil r/2\rceil -1\}^d$.
While percolation clusters naturally have many \emph{short}
cycles, we show that the \emph{long} cycles, i.e.,
cycles that pass through the boundary of the cube of width $r/4$
centered around each of their vertices, have length
of order $r^{d/3}$, as on the critical Erd\H{o}s-R\'enyi random graph.
On the Erd\H{o}s-R\'enyi random graph, cycles play an essential role
in the scaling limit of the large critical clusters, as
identified by Addario-Berry, Broutin and Goldschmidt (2010).

Our proofs crucially rely on various new estimates of probabilities of 
the existence of open paths in critical Bernoulli percolation on $\Z^d$ 
with constraints on their lengths. We believe these estimates are interesting in their own right. 
\end{abstract}

\begin{abstract}
Plusieurs propri\'et\'es du comportement des grandes composantes connexes de la percolation critique sur le tore 
en dimensions grandes ont \'et\'e r\'ecemment \'etablies, telles la taille et le diam\'etre. 
L'ordre de grandeur de ces quantit\'es est \'egal \`a celle de la percolation sur le graphe complet ou sur le graphe al\'eatoire de Erd\H{o}s-R\'enyi. 
Ce r\'esultat sugg\`ere la question \`a savoir si les limites d'\'echelles des plus grandes composantes connexes, 
telles qu'identifi\'ees par Aldous (1997), sont aussi \'egales.

Dans ce travail, nous \'etudions la structure des cycles des plus grandes composantes connexes 
pour la percolation critique en dimension grande sur le tore $\{-\lfloor r/2\rfloor, \ldots, \lceil r/2\rceil -1\}^d$. 
Alors que les amas de percolation ont plusieurs cycles courts, nous montrons que les cycles longs, 
c'est-\`a-dire ceux qui passent \`a travers la fronti\`ere de chacun des cubes de largeur $r/4$ centr\'es aux sommets du cycle, 
ont une longueur de l'ordre $r^{d/3}$, comme dans le cas du graphe al\'eatoire critique d'Erd\H{o}s-R\'enyi. 
Sur ce dernier, les cycles jouent un r\^ole essentiel dans la limite d'\'echelle des grands amas critiques tels qu'identifi\'es par Addario-Berry, Broutin and Goldschmidt (2010).

Les preuves sont bas\'ees de mani\`ere cruciale sur de nouveaux estim\'es de la probabilit\'es d'existence de chemins ouverts dans la percolation critique 
de type Bernouilli sur $\Z^d$ avec contraintes sur leurs longueurs. 
Ces estim\'es sont potentiellement int\'eressants en soi.
\end{abstract}

\section{Introduction and results}
\label{sec-intro}
In the past years, the investigation of percolation on various
high-dimensional tori has attracted tremendous attention.
In \cite{BCHSS05a, BCHSS05b}, the phase transition of the
largest connected component was investigated for percolation
on general high-dimensional tori, including the complete graph,
the hypercube in high dimensions, as well as finite-range
percolation in sufficiently high dimensions. The phase transition
of percolation on high-dimensional tori is {\it mean-field},
i.e., it shares many features with that on
the complete graph as identified in \cite{ErdRen60} (see, e.g.,
\cite{Aldo97,Boll01,JanKnuLucPit93,JanLucRuc00,LucPitWie94}).

In \cite{BCHSS05a}, the subcritical and critical behavior was investigated under
the so-called \emph{triangle condition}, a general assumption
on the underlying graph that ensures that the model is mean-field.
The critical behavior of the model was identified in terms of the
blow-up of the expected cluster size,
which identifies a window of critical values of the edge occupation
probabilities. For any parameter value in this critical window,
the largest connected component was shown to be of order
$V^{2/3}$, as on the complete graph, where $V$ denotes the number
of vertices in the graph. In \cite{BCHSS05b}, the triangle condition
was proved to hold for the above-mentioned examples.

The situation of finite-range high-dimensional tori, which in the graph
sense converge to the hyper-cubic lattice,  was brought
substantially further in \cite{HH, HH1}, where, among others, it was shown that
the percolation critical value on the infinite lattice lies inside
the scaling window. We now know that the largest connected components are all of order $V^{2/3}$,
that the maximal connected component $|{\mathcal C}_{\rm max}|$ satisfies
that $|{\mathcal C}_{\rm max}|V^{-2/3}$ and $V^{2/3}/|{\mathcal C}_{\rm max}|$
are tight sequences of random variables that are non-concentrated, and that
the diameter of large clusters is of order $V^{1/3}$. These results
(and more) are also known to hold on the Erd\H{o}s-R\'enyi random graph,
see e.g., \cite{Aldo97, NacPer07b}, as well as the monographs
\cite{Boll01, JanLucRuc00}. This raises the question whether
the scaling limits agree. We shall expand on this question
in Section \ref{sec-dis} below.

\subsection{Percolation in high dimensions}
\label{secModel}
We consider bond percolation on a graph $G$. For a given parameter $p\in[0,1]$,
this is the probability measure ${\mathbb P}_p$ on subgraphs of $G$ defined as follows.
We delete edges of $G$ with probability $(1-p)$ and otherwise keep them,
independently for different edges. The edges of the resulting random subgraph of
$G$ are called {\it open} and the deleted edges are called {\it closed}.
Connected components of this random subgraph are called {\it open clusters}.
The graphs we investigate in this paper are
(a) the $d$-dimensional torus ${\mathbb T}_r^d = \{-\lfloor r/2\rfloor,\ldots,\lceil r/2\rceil-1\}^d$;
and (b) the hypercubic lattice ${\mathbb Z}^d$,
where the dimension $d$ is supposed to be sufficiently large.
How large we need to take $d$ depends on the edge structure of $G$.
We consider two different settings:
(a) In the {\it nearest-neighbor model}, two vertices are connected
by an edge if they are nearest-neighbors on $G$.
With our choice of $G$, every vertex has $2d$ nearest-neighbors.
In this setting we take the dimension $d$ large enough.
(b) In the {\it spread-out model} with a parameter $L$, two vertices
are connected by an edge if there is a hypercube of size $L$ in $G$
that contains these vertices.
With our choice of $G$, every vertex has $(2L+1)^d-1$ neighbors.
Of course, we are only interested in the case when the size of
the torus is much larger than $L$. In the spread-out setting with
large enough $L$, we take the dimension $d>6$.

To justify our choice of dimension, we recall a number of well-known
results about percolation on $\mathbb Z^d$. For bond percolation on
${\mathbb Z}^d$ with $d>1$, there exists a critical probability $p_c\in(0,1)$
such that, for $p<p_c$, all open clusters are almost surely finite and,
for $p>p_c$, there is almost surely an infinite open cluster.
At $p=p_c$, it is widely believed that there is almost surely no infinite open cluster.
This fact has been shown for $d=2$ by Kesten \cite{Kesten1/2} and for sufficiently
large $d$ by Hara and Slade \cite{HS}.
Here, by sufficiently large $d$, we mean $d > 18$ for the nearest-neighbor model and
$d>6$ for the spread-out model with sufficiently large $L$.
Showing this for all $d>1$ remains a challenging open problem.

The main assumption that we use in the paper concerns an estimate on the probability that,
at criticality, two vertices $x$ and $y$ are in the same open cluster of bond percolation on $\mathbb Z^d$,
which we denote by $x\leftrightarrow y$. We assume that there exist
constants $D_1$ and $D_2$ such that, for all $x$ and $y$ in $\mathbb Z^d$,
    \begin{equation}
    \label{prop2pf}
    D_1(1+|x-y|)^{2-d}\leq {\mathbb P}_{p_c}(x\leftrightarrow y) \leq D_2(1+|x-y|)^{2-d}.
    \end{equation}
These bounds have been established using so-called \emph{lace-expansion} techniques,
for the nearest-neighbor model with large enough $d$ by Hara \cite{Hara2008}, and
for the spread-out model with $d>6$ by Hara, van der Hofstad and Slade \cite{HHS}.
In fact, these papers give asymptotic formulas for such probabilities, but for our purposes,
the bounds (\ref{prop2pf}) suffice.

It is believed that the estimates (\ref{prop2pf}) hold for the nearest-neighbor model with $d>6$,
however the proof of this fact is beyond the current methods.
It has been proved by Chayes and Chayes \cite{ChCh} (assuming the existence of critical exponents)
that the bounds (\ref{prop2pf}) are violated for $d<6$.
The dimension $d=d_c=6$ is usually referred to as the {\it upper critical dimension}.

A simple computation using the upper bound in (\ref{prop2pf}) shows that
    \begin{equation}
    \label{triangle-cond}
    \nabla(p_c)=\sum_{x,y} {\mathbb P}_{p_c}(0\leftrightarrow x)
    {\mathbb P}_{p_c}(x\leftrightarrow y)
    {\mathbb P}_{p_c}(y\leftrightarrow 0)<\infty.
    \end{equation}
The bound in \eqref{triangle-cond} is called the \emph{triangle condition},
and is believed to be true for $d>6$. The triangle condition implies that
the sub- and critical phases of percolation on ${\mathbb Z}^d$ behave
similarly to the ones on a tree, for example, many critical exponents
on ${\mathbb Z}^d$ are equal to those on the tree. Intuitively,
the geometry of large critical clusters trivializes, since the space
is so vast that far away clusters are close to being independent.
In recent years, a related condition has been proved to hold on the torus,
which implies that the critical behavior of large connected components
on the high-dimensional torus is similar to that on the complete graph.
Sometimes this is called \emph{random graph asymptotics} for percolation
on the high-dimensional torus.

In this paper, we study the \emph{cycle structure} of bond percolation on the $d$-dimensional torus
in the above two settings.
Despite the fact that, for any $p\in(0,1)$, the vertices in open cycles occupy a positive fraction of the torus,
which is not the case for the critical and subcritical Erd\H{o}s-R\'enyi random graph
(see \cite{LucPitWie94}),
most of such vertices belong only to \emph{short} cycles, such as open squares of four bonds.
Short cycles vanish in the scaling limit of large critical clusters,
and are thus irrelevant to the scaling limit.
Therefore, we focus on the existence of open long cycles,
where we say that a cycle is long when it passes through the boundary of the cube of width $r/4$
centered around each of its vertices.
Special cases of long cycles are \emph{non-contractible cycles},
which are cycles that cannot, when considered as continuous curves, be contracted
to a point, and thus wind around the torus at least once.

Our main results show that the mean number of vertices in open
long cycles grows like $V^{1/3}$, and that such cycles
(when they exist) contain order of $V^{1/3}$ vertices. Moreover, we
show that the probability of the existence of at least one
open long cycle in a large cluster is bounded away from 0 and 1,
uniformly in the volume of the graph. As we discuss
in more details below, this situation is analogous to the situation
on the complete graph, as investigated in \cite{AddBroGol10,Aldo97,LucPitWie94}.
We also refer the reader to \cite[pages 722-723]{LucPitWie94}
for the discussion of more refined results about the structure of connected components
of the critical Erd\H{o}s-R\'enyi random graph.

For simplicity of presentation, {\it we restrict ourselves hereafter to
the nearest-neighbor model}. The results of this paper still hold for the
spread-out model on the $d$-dimensional torus with $d>6$.

The remainder of this section is organized as follows. In Section
\ref{sec-res}, we describe our main results, in Section \ref{sec-extra},
we describe some results on critical percolation on $\Z^d$ and the torus
that are used in the proofs of our main results and are interesting in
their own right, and in Section \ref{sec-dis},
we discuss the results and their relation to the work on the
Erd\H{o}s-R\'enyi random graph.

\subsection{Main results}
\label{sec-res}
We start by introducing some notation. For $a\in\R$, we write $|a|$ for the absolute value of $a$, and,
for a site $x = (x_1,\ldots,x_d)\in\Z^d$, we write $|x|$ for $\max(|x_1|,\ldots,|x_d|)$, and 
$|x|_1$ for $\sum_{i=1}^d|x_i|$.
For $s>0$ and $x\in\Z^d$,
let $\Qn{s}(x) = \{y\in\Z^d\colon|y-x|\leq s\}$ and $\partial \Qn{s}(x) = \{y\in\Z^d\colon|y-x|=\lfloor s\rfloor \}$.
We write $\Qn{s}$ for $\Qn{s}(0)$ and $\partial \Qn{s}$ for $\partial \Qn{s}(0)$.

\medskip

For a positive integer $r$, we consider the torus $(\torus,\mathbb E_r^d)$ with 
${\mathbb T}_r^d =\{-\lfloor r/2\rfloor,\ldots,\lceil r/2\rceil -1\}^d$
and the edge set 
${\mathbb E}_r^d = \big\{\{x,y\} \in\torus\times\torus\colon\sum_{i=1}^d|(x_i-y_i)(\mbox{mod } r)|=1\big\}$. 
We often abuse notation and write $\torus$ for $(\torus,{\mathbb E}^d_r)$.
The vertex $0 = (0,\ldots,0)$ is called the origin. 
We denote the number of vertices in the torus or \emph{volume} by $V = r^d$.
For $p\in[0,1]$, we consider the probability space
$(\Omega_{\sT,p},{\mathcal F}_{\sT,p},{\mathbb P}_{\sT,p})$, where
$\Omega_{\sT,p} = \{0,1\}^{{\mathbb E}_r^d}$, ${\mathcal F}_{\sT,p}$
is the $\sigma$-field generated by the 
cylinders of $\Omega_{\sT,p}$, and ${\mathbb P}_{\sT,p}$ is a
product measure on $(\Omega_{\sT,p},{\mathcal F}_{\sT,p})$,
${\mathbb P}_{\sT,p} = \prod_{e\in{\mathbb E}_r^d}\mu_e$, where
$\mu_e$ is given by $\mu_e(\omega_e = 1) = 1-\mu_e(\omega_e = 0) = p$, for
vectors $(\omega_e)_{e\in{\mathbb E}_r^d}\in\Omega_{\sT,p}$.
We write ${\mathbb E}_{\sT,p}$ for the expectation with respect to
${\mathbb P}_{\sT,p}$.

We further consider the hypercubic lattice $(\Z^d,{\mathbb E}^d)$, where
the edge set is given by
${\mathbb E}^d = \big\{\{x,y\} \in\Z^d\times\Z^d\colon|x-y|_1=1\big\}$.
Again, we often abuse notation and write $\Z^d$ for $(\Z^d,{\mathbb E}^d)$.
For $p\in[0,1]$, we consider a probability space
$(\Omega_{\sZ,p},{\mathcal F}_{\sZ,p},{\mathbb P}_{\sZ,p})$, where
$\Omega_{\sZ,p} = \{0,1\}^{{\mathbb E}^d}$, ${\mathcal F}_{\sZ,p}$
is the $\sigma$-field generated by the finite-dimensional
cylinders of $\Omega_{\sZ,p}$, and ${\mathbb P}_{\sZ,p}$
is the product measure on $(\Omega_{\sZ,p},{\mathcal F}_{\sZ,p})$,
${\mathbb P}_{\sZ,p} = \prod_{e\in{\mathbb E}^d}\mu_e$, where $\mu_e$
is given by $\mu_e(\omega_e = 1) = 1 - \mu_e(\omega_e = 0) = p$, for
vectors $(\omega_e)_{e\in{\mathbb E}^d}\in\Omega_{\sZ,p}$.

In both settings, we say that an edge $e$ is {\it open} or {\it occupied} if
$\omega_e = 1$, and $e$ is {\it closed} or {\it vacant} if $\omega_e = 0$.

\medskip

The event that two sets of sites ${\mathcal K}_1,{\mathcal K}_2\subset\mathbb T_r^d$
are connected by an open path is denoted by
$\{{\mathcal K}_1 \leftrightarrow {\mathcal K}_2~\mbox{in}~\mathbb T_r^d\}$, 
and the event that ${\mathcal K}_1$ and ${\mathcal K}_2$ are connected by an open path 
of length (number of edges) at most $k$ is denoted by 
$\{{\mathcal K}_1 \stackrel{\leq k}\longleftrightarrow {\mathcal K}_2~\mbox{in}~\mathbb T_r^d\}$.
We write ${\mathcal C}_{\sT}(x)$ for the set of $y\in\mathbb T_r^d$ such that
$x\leftrightarrow y$ in $\mathbb T_r^d$.

\medskip

Similarly, the event that two sets of sites ${\mathcal K}_1,{\mathcal K}_2\subset\Z^d$
are connected by an open path is denoted by $\{{\mathcal K}_1 \leftrightarrow {\mathcal K}_2~\mbox{in}~\Z^d\}$, and the event that
${\mathcal K}_1$ and ${\mathcal K}_2$ are connected by an open path of length at most $k$ is denoted by 
$\{{\mathcal K}_1 \stackrel{\leq k}\longleftrightarrow {\mathcal K}_2\mbox{ in }\Z^d\}$.
We write ${\mathcal C}_{\sZ}(x)$ for the set of $y\in\mathbb Z^d$ such
that $x\leftrightarrow y$ in $\mathbb Z^d$.

\medskip

Finally, for $x,y\in\torus$, we write 
$\tau_{\sT,p}(x,y) = {\mathbb P}_{\sT,p}(x\leftrightarrow y\mbox{ in }\torus)$, and $\tau_{\sT,p}(x)=\tau_{\sT,p}(0,x)$, while,
for $x,y\in\Z^d$, $\tau_{\sZ,p}(x,y) =  {\mathbb P}_{\sZ,p}(x\leftrightarrow y\mbox{ in }\Z^d)$
and $\tau_{\sZ,p}(x)=\tau_{\sZ,p}(0,x)$.

\medskip

We call a nearest-neighbor path $\pi = (x(1),\ldots,x(m))$ in $\torus$ a {\it cycle} if it is edge-disjoint and $x(1) = x(m)$, 
i.e., $\pi$ is an (edge-)self-avoiding polygon.
We say that a cycle $\pi$ is {\it long} if for each $1\leq n\leq m$, the cycle $\pi$ has a vertex in  
$\partial \Qn{r/4}(x(n))$. Finally, we denote by $\LC{k}$ the event that the origin is in an open long
cycle of length at most $k$.

\medskip

For two functions $g$ and $h$ from a set ${\mathcal X}$ to ${\mathbb R}$,
we write $g(z) \asymp h(z)$ to indicate that $g(z)/h(z)$ is bounded away from $0$ and $\infty$, uniformly in $z\in{\mathcal X}$.
All the constants $(C_i)$ in the proofs are strictly positive and finite
and depend only on the dimension, unless the dependence on other parameters is explicitly stated.
Their exact values may be different from section to section.

\medskip

We first give bounds on the probability that a vertex of the torus is in
an open long cycle.

\begin{theorem}[Expected number of vertices in long cycles]
\label{thmNCCExponent}
Assume (\ref{prop2pf}). 
For $x\in {\mathbb T}_r^d$,
    \begin{equation}
    {\mathbb P}_{\sT,p_c}(x~\mbox{is in an open long cycle})\asymp V^{-2/3}.
    \end{equation}
Consequently,
    \begin{equation}
    {\mathbb E}_{\sT,p_c}\big[\#\{x\colon x~\mbox{is in an open long cycle}\}\big]
    \asymp V^{1/3}.
    \end{equation}
\end{theorem}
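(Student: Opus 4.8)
The torus $\torus$ is vertex-transitive, so $\PP_{\sT,p_c}(x\text{ is in an open long cycle})$ does not depend on $x$; write $\theta_V$ for its common value. The second display is then immediate from the first by linearity of expectation,
\[
\EE_{\sT,p_c}\big[\#\{x : x\text{ is in an open long cycle}\}\big]
=\sum_{x\in\torus}\PP_{\sT,p_c}(x\text{ is in an open long cycle})
=V\,\theta_V ,
\]
so the task is to prove $\theta_V\asymp V^{-2/3}$, and by transitivity I take $x=0$.

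\emph{Upper bound.} The plan is to deduce from $\{0\text{ is in an open long cycle}\}$ the simultaneous, edge-disjoint occurrence of several \emph{long} open connections out of $0$, and to bound the probability of such a configuration by the BK inequality together with the torus two-point function. If $0$ lies on an open long cycle $\pi$, then $\pi$ meets $\partial\Qn{r/4}(0)$ at some vertex $y$, and the two arcs of $\pi$ from $0$ to $y$ are edge-disjoint open paths; moreover, since $\pi$ is long at \emph{every} of its (at least $\lfloor r/4\rfloor$) vertices, neither arc can be confined to a small box around any of its vertices, and iterating this observation along $\pi$ produces a chain of $\lfloor r/4\rfloor$-separated waypoints. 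Applying BK and summing over the positions of these waypoints, $\theta_V$ is bounded by a convolution of torus two-point functions $\tau_{\sT,p_c}$. One then inserts the estimate $\tau_{\sT,p_c}(0,y)\le C\big[(1+|y|)^{2-d}+V^{-2/3}\big]$, which follows from \eqref{prop2pf} and the standard coupling of percolation on $\torus$ with percolation on $\Z^d$ inside $\Qn{r/4}$ (and in particular gives $\tau_{\sT,p_c}(0,y)\le CV^{-2/3}$ whenever $|y|\ge\lfloor r/4\rfloor$ and $d>6$), together with $\sum_{y\in\torus}\tau_{\sT,p_c}(0,y)\asymp V^{1/3}$; the convolution then collapses to $O(V^{-2/3})$. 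The length constraints are essential here: the naive estimate $\theta_V\le\sum_{y\in\partial\Qn{r/4}}\tau_{\sT,p_c}(0,y)^2$, whose right-hand side is of order $r^{d-1}V^{-4/3}=V^{-1/3-1/d}$, is of the wrong order for $d>6$, and it is exactly to close this gap that the length-restricted connection estimates of Section~\ref{sec-extra} are needed.

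\emph{Lower bound.} Here the plan is a second moment argument. I fix a template consisting of pairs $(\gamma_1,\gamma_2)$ of internally vertex-disjoint open paths joining $0$ to a fixed far vertex $w$ and routed through a bounded number of waypoints that are pairwise at $\ell^\infty$-distance of order $r$, chosen so that the resulting cycle $\gamma_1\cup\gamma_2$ is automatically long (alternatively one may build a non-contractible cycle through $0$, which is long by the discussion in Section~\ref{sec-intro}, exploiting that a macroscopic torus cluster winds around $\torus$). Let $N$ be the number of such open cycles through $0$. Using the lower bound in \eqref{prop2pf} and the torus connection estimates one checks $\EE[N]\ge c\,V^{-2/3}$, while the second moment satisfies $\EE[N^2]\le C\big(\EE[N]+\EE[N]^2\big)$: the $\EE[N]$ term counts pairs of such cycles sharing a long common sub-path, whose rarity is again controlled by the length-restricted estimates, and the $\EE[N]^2$ term counts essentially independent pairs. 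The Paley--Zygmund inequality then gives $\theta_V\ge\PP(N\ge 1)\ge c'\,\EE[N]^2/\EE[N^2]\ge c''V^{-2/3}$.

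\emph{Main obstacle.} The delicate point is matching the exponent $V^{-2/3}$ (rather than, say, $V^{-4/3}$) in both directions. On the upper side this means one may not simply read off ``two disjoint arms reaching $\partial\Qn{r/4}(0)$''; on the lower side one must genuinely exhibit a cycle that is long \emph{at every one of its vertices}, for which a balanced ``two-hemisphere'' construction is far too costly. Both difficulties hinge on understanding that the relevant long cycles have length of order $V^{1/3}=r^{d/3}$, and are resolved through the new estimates on the lengths of open paths in critical percolation on $\Z^d$; controlling the second moment — the over-counting coming from cycles with a long common sub-path — is the corresponding technical crux on the lower side.
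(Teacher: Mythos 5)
Your reduction to $x=0$ by transitivity and the passage to the expectation by linearity are fine, and you correctly diagnose that the naive bound $\sum_{y\in\partial\Qn{r/4}}\tau_{\sT,p_c}(y)^2\asymp V^{-1/3-1/d}$ is too weak; but the mechanism you propose to close this gap does not work. A BK decomposition of the cycle into arcs between $\lfloor r/4\rfloor$-separated waypoints, estimated by \emph{unrestricted} torus two-point functions, only deteriorates as waypoints are added: each arc whose endpoints are at distance $\geq r/4$ contributes $\sup\tau_{\sT,p_c}\leq CV^{-2/3}$, but summing over each free waypoint contributes a factor of order $V$, so $m$ arcs give at most $C^mV^{m/3-1}$ ($V^{-1/3}$ for $m=2$, worse for larger $m$), never $V^{-2/3}$; and since the event ``$0$ is in an open long cycle'' carries no length constraint, the length-restricted estimates of Theorem~\ref{thm:ShortConn:r} cannot be applied to its arcs either. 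The ingredient actually needed (and used in Proposition~\ref{prNCCExponent1}) is a \emph{spatial} restriction: the arc of the cycle from $0$ up to its first exit of $\Qn{r/4}$ is a connection \emph{inside} $\Qn{r/4}$, and the nontrivial estimate $\sum_{x\in\partial\Qn{n}}{\mathbb P}_{\sZ,p_c}(0\leftrightarrow x\mbox{ in }\Qn{n})\leq C$ of Theorem~\ref{thm:ConnInsideBalls}(a) makes that boundary sum $O(1)$; multiplied by $\sup_{x\in\partial\Qn{r/4}}\tau_{\sT,p_c}(x)\leq CV^{-2/3}$ from Theorem~\ref{thmTwoPointTorus} together with $d>6$, this gives the upper bound. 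This estimate, proved in Section~\ref{sec-pf-thmShortArms} by Kozma--Nachmias-type arguments, is absent from your sketch.

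The lower bound has a more basic gap. Your second-moment variable $N$ counts configurations requiring \emph{two disjoint} macroscopic open connections, and the asserted step ``${\mathbb E}[N]\geq cV^{-2/3}$ using \eqref{prop2pf} and torus connection estimates'' has no available proof route: BK/Reimer bound disjoint occurrence only from above, FKG does not help with disjointness, and the product bound would give the wrong order $V^{-4/3}$ --- you acknowledge this (``two-hemisphere far too costly'') but offer no substitute mechanism. The paper's argument (Proposition~\ref{prNCCExponent2}) is structurally different: the second moment method is applied not to a count of cycles but to the number of $r$-equivalent translates $x\neq 0$ of the origin with $0\stackrel{\leq k}\longleftrightarrow u(x)$ in $\Z^d$, a count of \emph{single} length-restricted connections whose mean is of order $k/V$ by \eqref{propSizeCnr}; the resulting event is transferred to the torus through the explicit coupling of Proposition~\ref{prCoupling}, with the coupling error controlled by \eqref{eq:ShortConn:r}--\eqref{eq:ShortConn:3r} and \eqref{propTriangle} (this is precisely where the restriction $k\leq\varepsilon V^{1/3}$ enters, since $\sum_{y\stackrel{r}\sim x}\tau_{\sZ,p_c}(y)$ diverges without a length cutoff); and the cycle is then closed by a local modification inside a bounded box $\Qn{R}$, at the cost of a constant factor, the resulting cycle being long at every vertex simply because it contains both $0$ and a point of $\partial\Qn{r/2}$. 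Without this device (or an equivalent one) your lower bound remains a plan rather than a proof, and the torus lower bound $\tau_{\sT,p_c}(0,w)\geq cV^{-2/3}$ for distant $w$ that you implicitly invoke is itself of the same depth as the statement being proved.
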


\medskip

In the next theorem we show that, with high probability, large open clusters of
the torus may only contain long cycles of length of order $V^{1/3}$. 

\begin{theorem}[Long cycles have length of order $V^{1/3}$]
\label{thmNCCExistence}
Assume (\ref{prop2pf}). 
There exists $C<\infty$ such that for any positive $\varepsilon$ and $\delta$, and integer $r\geq 1$,
\begin{equation}\label{eq:LCexistence:1}
{\mathbb P}_{\sT,p_c}
\left(\exists x\colon |{\mathcal C}_{\sT}(x)|>\delta V^{2/3},
~{\mathcal C}_{\sT}(x)~\mbox{contains a long cycle of length}~\leq \varepsilon V^{1/3}\right) 
\leq \frac{C\varepsilon}{\delta} ,\
\end{equation}
and 
\begin{equation}\label{eq:LCexistence:2}
{\mathbb P}_{\sT,p_c}
\left(\exists \mbox{ a long cycle of length}~\geq \varepsilon^{-1} V^{1/3}\right)\leq C\varepsilon .\
\end{equation}
\end{theorem}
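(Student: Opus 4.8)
We first dispose of \eqref{eq:LCexistence:2}, which is a first moment bound: if there is an open long cycle of length at least $\varepsilon^{-1}V^{1/3}$, then each of its (at least $\varepsilon^{-1}V^{1/3}$) vertices lies in an open long cycle, so $\#\{x\colon x\text{ is in an open long cycle}\}\ge\varepsilon^{-1}V^{1/3}$. Markov's inequality and the second display of Theorem~\ref{thmNCCExponent} then give
\[
\P_{\sT,p_c}\big(\exists\text{ a long cycle of length }\ge\varepsilon^{-1}V^{1/3}\big)\le\varepsilon V^{-1/3}\,\E_{\sT,p_c}\big[\#\{x\colon x\text{ is in an open long cycle}\}\big]\le C\varepsilon .
\]

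For \eqref{eq:LCexistence:1}, set $\ell=\lfloor\varepsilon V^{1/3}\rfloor$ and let $\mathcal E$ be the event on the left-hand side. On $\mathcal E$ at least one open cluster of size larger than $\delta V^{2/3}$ contains an open long cycle of length $\le\ell$, so, writing $N$ for the number of such clusters, $\P_{\sT,p_c}(\mathcal E)\le\E_{\sT,p_c}[N]$. I would count each cluster through its vertices, $N=\sum_{x}|{\mathcal C}_{\sT}(x)|^{-1}\mathbf 1\big\{|{\mathcal C}_{\sT}(x)|>\delta V^{2/3}\text{ and }{\mathcal C}_{\sT}(x)\text{ contains an open long cycle of length}\le\ell\big\}$, and bound $|{\mathcal C}_{\sT}(x)|^{-1}<(\delta V^{2/3})^{-1}$ on the indicated event; translation invariance then yields $\P_{\sT,p_c}(\mathcal E)\le(V^{1/3}/\delta)\,\P_{\sT,p_c}(\mathcal B)$, where $\mathcal B=\{{\mathcal C}_{\sT}(0)\text{ contains an open long cycle of length}\le\ell\}$. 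Note that $\mathcal B$ does \emph{not} require $0$ to lie on the cycle, so the next step is to peel off the connection from $0$ to the cycle: on $\mathcal B$, choosing a self-avoiding open path from $0$ to a vertex of the cycle and letting $u$ be the first vertex of the cycle on it, the events ``$u$ is in an open long cycle of length $\le\ell$'' and $\{0\conn u\}$ occur on edge-disjoint sets of open edges. Summing the BK inequality over $u$ and using $\sum_{u}\tau_{\sT,p_c}(0,u)=\E_{\sT,p_c}|{\mathcal C}_{\sT}(0)|\le CV^{1/3}$ (the mean-field behaviour of the high-dimensional torus, a consequence of \eqref{prop2pf}) gives $\P_{\sT,p_c}(\mathcal B)\le CV^{1/3}\,\P_{\sT,p_c}(\LC{\ell})$, and hence \eqref{eq:LCexistence:1} reduces to the key estimate
\begin{equation}\label{eq:LCkey}
\P_{\sT,p_c}(\LC{\ell})\le C\varepsilon V^{-2/3},\qquad\ell=\lfloor\varepsilon V^{1/3}\rfloor .
\end{equation}

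To prove \eqref{eq:LCkey} I would observe that an open long cycle through $0$ of length $\le\ell$ meets $\partial\Qn{r/4}(0)$, so it contains a vertex $w$ with $|w|=\lfloor r/4\rfloor$, and its two arcs between $0$ and $w$ are edge-disjoint open paths of total length $\le\ell$; thus $\LC{\ell}\subseteq\bigcup_{w\in\partial\Qn{r/4}}\{0\stackrel{\le\ell}{\Longleftrightarrow}w\}$, where $\{0\stackrel{\le\ell}{\Longleftrightarrow}w\}$ is the event that two edge-disjoint open paths join $0$ to $w$ with total length $\le\ell$, and BK bounds $\P_{\sT,p_c}(0\stackrel{\le\ell}{\Longleftrightarrow}w)\le\sum_{\ell_1+\ell_2\le\ell}\P_{\sT,p_c}(0\stackrel{\le\ell_1}{\conn}w)\,\P_{\sT,p_c}(0\stackrel{\le\ell_2}{\conn}w)$. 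Summing over $w\in\partial\Qn{r/4}$ and passing from the torus to $\Z^d$ reduces everything to the length-constrained two-point function $\P_{\sZ,p_c}(0\stackrel{\le k}{\conn}x)$, which I would control with the estimates of Section~\ref{sec-extra}. \textbf{The main obstacle} is to extract the factor $\varepsilon$: the crude bound $\P_{\sT,p_c}(0\stackrel{\le\ell}{\Longleftrightarrow}w)\le\tau_{\sT,p_c}(0,w)^2\asymp V^{-4/3}$ ignores the length constraint and only yields $\P_{\sT,p_c}(\LC{\ell})\le C|\partial\Qn{r/4}|V^{-4/3}$, which is too large by a factor of order $r^{d/3-1}$. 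Because $d>6$ one has $\ell\gg r^2$, so two disjoint open paths joining $0$ to a point at $\ell^\infty$-distance $r/4$ typically have total length of order $V^{1/3}$; the real content of \eqref{eq:LCkey} is that restricting their total length to $\le\varepsilon V^{1/3}$, a lower-tail event, costs only a factor of order $\varepsilon$, and this is precisely what the new $\Z^d$ length-constrained path estimates are meant to supply — I expect this to be the most delicate point of the argument. (When $\varepsilon V^{1/3}$ is smaller than a constant multiple of $r$, no cycle of length $\le\ell$ can have diameter $r/4$, so $\LC{\ell}=\varnothing$ and \eqref{eq:LCkey} is trivial.)
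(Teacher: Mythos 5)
Your handling of \eqref{eq:LCexistence:2} and your reduction of \eqref{eq:LCexistence:1} coincide with the paper's argument: Markov plus Theorem~\ref{thmNCCExponent} for the former (note only that an edge-self-avoiding cycle of length $L$ may visit a vertex up to $d$ times, so it has at least $L/(2d)$ distinct vertices rather than $L$ -- a constant-factor correction), and for the latter the chain ${\mathbb P}(\mathcal E)\le(\delta V^{2/3})^{-1}V\,{\mathbb P}(\mathcal B)\le(CV^{2/3}/\delta)\,{\mathbb P}_{\sT,p_c}(\LC{\ell})$ via BK and ${\mathbb E}_{\sT,p_c}|{\mathcal C}_{\sT}(0)|\le CV^{1/3}$ is exactly the paper's. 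The genuine gap is that the whole statement then rests on the estimate ${\mathbb P}_{\sT,p_c}(\LC{k})\le Ck/V$ (your key bound with $k=\lfloor\varepsilon V^{1/3}\rfloor$), which you explicitly leave unproved, describing it as the delicate point to be supplied by length-constrained path estimates. In the paper this is precisely the second assertion of Proposition~\ref{prNCCExponent1}, proved before Theorem~\ref{thmNCCExistence}, and it is where essentially all of the work lies.

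Moreover, the route you sketch for it would not close. Splitting the long cycle at a vertex $w\in\partial\Qn{r/4}$ into two edge-disjoint \emph{length-constrained} arcs and applying BK gives, in the relevant regime $\ell\ge r^2$ and using Proposition~\ref{prCoupling}(a), \eqref{eq:ShortConn:r} and \eqref{KN-ball}, at best
\[
\sum_{w\in\partial\Qn{r/4}}{\mathbb P}_{\sT,p_c}\big(0\stackrel{\le\ell}\conn w\big)^2
\;\le\; |\partial\Qn{r/4}|\Big(\frac{C\ell}{V}\Big)^{2}\;\le\;\frac{Cr^{d-1}\ell^{2}}{V^{2}},
\]
which exceeds the required $C\ell/V$ by the factor $r^{d-1}\ell/V\asymp\varepsilon r^{d/3-1}$, unbounded in $r$ for $d>6$; replacing the supremum by the boundary sum $\sum_{w}{\mathbb P}_{\sT,p_c}(0\stackrel{\le\ell}\conn w)\le C\ell/r$, or keeping the convolution over $(\ell_1,\ell_2)$, does not recover the missing power of $V$. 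The idea you are missing is an \emph{asymmetric} decomposition: follow the cycle from $0$ until its first exit of $\Qn{r/4}$, so that this arc stays \emph{inside} $\Qn{r/4}$, while the return arc carries the length constraint; BK then yields
\[
{\mathbb P}_{p_c}(\LC{k})\;\le\;\sum_{x\in\partial\Qn{r/4}}
{\mathbb P}_{p_c}\big(0\conn x\mbox{ in }\Qn{r/4}\big)\,
{\mathbb P}_{p_c}\big(0\stackrel{\le k}\conn x\mbox{ in }\torus\big)
\;\le\; C\cdot\frac{Ck}{V},
\]
where the box-restricted boundary sum is $O(1)$ by Theorem~\ref{thm:ConnInsideBalls}(a) and the uniform bound $Ck/V$ comes from Proposition~\ref{prCoupling}(a) and Theorem~\ref{thm:ShortConn:r}. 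It is the confinement of one arc to the box -- not a second length constraint -- that produces the factor $k/V=\varepsilon V^{-2/3}$, so as written your proposal has a real hole at its central step.
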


\medskip

We next study the number of long cycles. We start by defining what this is.
For a subgraph $G$ of the torus, we define $Y_G$ as the smallest $k$ for
which there exist edges $e_1,\ldots, e_k$ in $G$ such that
$G\setminus\{e_1,\ldots,e_k\}$ does not contain any long cycles.
For $\delta>0$, we define
    \[
     Y_\delta = \sum_{\mathcal C} Y_{\mathcal C} I(|\mathcal C| >\delta V^{2/3}) ,\
    \]
where the sum is over all open clusters ${\mathcal C}$ of the torus.
We prove the following theorems:

\begin{theorem}
\label{thm:Ytight}
Assume (\ref{prop2pf}).
There exists $C<\infty$ such that for all $\delta>0$ and integer $r\geq 1$,
    \[
    {\mathbb E}_{\sT,p_c}[Y_\delta] \leq C/\delta .\
    \]
In particular, the random variables $Y_\delta$ are tight.
\end{theorem}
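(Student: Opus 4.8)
The plan is to bound $\E_{\sT,p_c}[Y_\delta]$ by relating the minimum number of edges whose removal destroys all long cycles in a cluster $\mathcal{C}$ to a quantity we already control, namely the number of vertices lying on long cycles. The key observation is that $Y_{\mathcal{C}}$ is at most the number of \emph{edges} of $\mathcal{C}$ that lie on some long cycle: removing all such edges certainly kills every long cycle. In fact, a cleaner bound is that $Y_{\mathcal{C}}$ is at most the cyclomatic number (first Betti number) of the subgraph of $\mathcal{C}$ spanned by the long cycles, which in turn is at most the number of vertices incident to a long cycle (since in any graph with $v$ vertices the number of independent cycles is at most the number of edges, and for a bounded-degree graph the number of edges is $O(v)$; here degree is at most $2d$). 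Summing over clusters with $|\mathcal{C}| > \delta V^{2/3}$,
\[
Y_\delta \;\leq\; C_1 \sum_{\mathcal C} \#\{x \in \mathcal{C}\colon x \text{ is in an open long cycle}\}\, I(|\mathcal C| >\delta V^{2/3})
\;\leq\; C_1 \sum_{x} I(x \text{ is in an open long cycle}),
\]
where the last step just drops the cluster-size restriction. Taking expectations and applying the already-proven bound from Theorem \ref{thmNCCExponent}, $\E_{\sT,p_c}[\#\{x\colon x \text{ is in an open long cycle}\}] \asymp V^{1/3}$, would give $\E_{\sT,p_c}[Y_\delta] \leq C_2 V^{1/3}$, which is \emph{not} the claimed bound $C/\delta$ and is in fact much worse when $\delta$ is bounded below. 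So the naive union bound is too lossy, and the restriction to large clusters must be exploited.

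The refinement I would carry out is to keep the size restriction and argue via a size-biasing / conditioning step. Write
\[
\E_{\sT,p_c}[Y_\delta] \;\leq\; C_1 \,\E_{\sT,p_c}\!\Big[\sum_{\mathcal C} I(|\mathcal C| >\delta V^{2/3}) \sum_{x\in\mathcal C} I(x \text{ is in an open long cycle})\Big]
\;=\; C_1 \sum_{x} \E_{\sT,p_c}\!\big[ I(|\mathcal C_{\sT}(x)| >\delta V^{2/3})\, I(x \text{ is in an open long cycle})\big].
\]
For each fixed $x$ this probability is at most $\PP_{\sT,p_c}(x \text{ is in an open long cycle})$, but we want to gain the factor $1/\delta$ rather than the factor $V^{1/3}$. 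The point is that on the event that $x$ is in a long cycle, $x$ is in particular in a cluster of diameter at least $r/4$, hence — by the torus analogue of the known cluster-size lower bounds under \eqref{prop2pf} (this is the kind of estimate flagged in Section \ref{sec-extra} as used throughout) — a cluster that is ``large'' in a scaling sense; more precisely one shows $\PP_{\sT,p_c}(x \in \text{open long cycle}) \asymp V^{-2/3}$ \emph{and} that, conditionally on $x$ being in a long cycle, $|\mathcal C_{\sT}(x)|$ is of order $V^{2/3}$ with the usual non-degenerate tail, so that summing $I(|\mathcal C_{\sT}(x)| > \delta V^{2/3})$ over $x$ — equivalently, $\sum_{\mathcal C} |\mathcal C| I(|\mathcal C| > \delta V^{2/3})$ reweighted by the long-cycle indicator — is controlled. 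Concretely I would bound
\[
\sum_x \E\big[ I(|\mathcal C_{\sT}(x)| >\delta V^{2/3}) I(x\in\text{l.c.})\big] \leq \E\Big[\sum_{\mathcal C\colon |\mathcal C|>\delta V^{2/3}} \#\{x\in\mathcal C\colon x\in\text{l.c.}\}\Big],
\]
and then use that each such cluster contributes $\leq C |\mathcal C|$ while $\sum_{\mathcal C\colon|\mathcal C|>\delta V^{2/3}}|\mathcal C| \leq V$ deterministically, together with the fact that the \emph{number} of clusters of size exceeding $\delta V^{2/3}$ that carry a long cycle has expectation $O(1/\delta)$ — this last fact following from $\PP(\text{a given large cluster has a long cycle})$ being bounded away from $0$ and there being $O(1/\delta)$ clusters of size $> \delta V^{2/3}$ in expectation, a standard consequence of the $V^{2/3}$-scaling of the largest clusters. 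Combining, $Y_{\mathcal C} \leq C|\mathcal C|^{?}$ must be sharpened: the right bound is $Y_{\mathcal C} = O(\#\{x\in\mathcal C\colon x\in\text{l.c.}\})$ and, per large cluster, the expected number of long-cycle vertices is $O(|\mathcal C| \cdot V^{-2/3})$-ish only in an averaged sense, so the cleanest route is the double-counting identity above followed by the two ingredients ``total volume $\leq V$'' and ``$\E[\#\{\text{large clusters with a long cycle}\}] = O(1/\delta)$.''

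The main obstacle will be precisely this last combinatorial/probabilistic input: controlling, uniformly in $r$, the expected number of large clusters (size exceeding $\delta V^{2/3}$) and ensuring that the per-cluster contribution $Y_{\mathcal C}$ is genuinely $O(|\mathcal C|)$ — or better, matches the scaling so that the sum telescopes to $C/\delta$ — without picking up extra factors of $V^{1/3}$. I would handle the per-cluster bound via the bounded-degree cyclomatic estimate (elementary) and handle the count of large clusters via the second-moment / cluster-size estimates for the high-dimensional torus under \eqref{prop2pf} already invoked elsewhere in the paper (the $V^{-2/3}$ one-point function and the $V^{2/3}$-scaling of $|\mathcal C_{\max}|$), which together give $\E[\#\{\mathcal C\colon |\mathcal C| > \delta V^{2/3}\}] \leq C/\delta$ by a direct first-moment computation $\sum_x \PP(|\mathcal C_{\sT}(x)| > \delta V^{2/3}) / (\delta V^{2/3}) \leq V \cdot (C V^{-2/3}) /(\delta V^{2/3}) \cdot \text{(something)}$ — the bookkeeping here is the delicate part and is where I expect the real work to lie. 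Tightness of $Y_\delta$ is then immediate from the uniform-in-$r$ bound $\E_{\sT,p_c}[Y_\delta] \leq C/\delta$ via Markov's inequality.
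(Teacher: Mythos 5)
Your per-cluster bound $Y_{\mathcal C}\leq C\,\#\{x\in\mathcal C\colon x\mbox{ is in a long cycle}\}$ is correct, and you rightly note that summing it over all clusters only yields $O(V^{1/3})$. But the refinement you propose does not close this gap. Restricting to clusters of size $>\delta V^{2/3}$ does not help as long as you bound $Y_{\mathcal C}$ by the number of long-cycle vertices: long cycles live essentially only in large clusters, so $\E\big[\sum_{\mathcal C\colon|\mathcal C|>\delta V^{2/3}}\#\{x\in\mathcal C\colon x\in\mbox{l.c.}\}\big]$ is still of order $V^{1/3}$, and your ``two ingredients'' (total volume $\leq V$, expected number of large clusters carrying a long cycle $=O(1/\delta)$) cannot bound it, since one such cluster typically carries order $V^{1/3}$ long-cycle vertices, not $O(1)$. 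Equivalently, at the vertex level you would need $\P_{\sT,p_c}\big(|\mathcal C_{\sT}(x)|>\delta V^{2/3},\,x\in\mbox{l.c.}\big)\leq C/(\delta V)$, a decorrelation-type inequality you never establish (positive association goes the wrong way, and the heuristic ``conditionally on $x$ being in a long cycle, $|\mathcal C_{\sT}(x)|$ is of order $V^{2/3}$'' is both unproven and unhelpful, since it says the cluster is large). Moreover, $\E[\#\{\mathcal C\colon|\mathcal C|>\delta V^{2/3}\}]\leq C/\delta$ does not follow from the first-moment computation you sketch: with only $\E_{\sT,p_c}|\mathcal C_{\sT}(0)|\leq CV^{1/3}$ one gets $C/\delta^2$.

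The missing idea, and the way the paper argues, is twofold. First, a size-biasing identity: $\E_{\sT,p_c}[Y_\delta]=\sum_x\E_{\sT,p_c}\big[|\mathcal C_{\sT}(x)|^{-1}Y_{\mathcal C_{\sT}(x)}I(|\mathcal C_{\sT}(x)|>\delta V^{2/3})\big]\leq(\delta V^{2/3})^{-1}V\,\E_{\sT,p_c}[Y_{\mathcal C_{\sT}(0)}]$, so it suffices to prove $\E_{\sT,p_c}[Y_{\mathcal C_{\sT}(0)}]\leq CV^{-1/3}$, a bound for the cluster of a \emph{fixed} vertex, which is small because $0$ rarely lies in a cluster containing a long cycle at all. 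Second, one cannot get this by bounding $Y_{\mathcal C_{\sT}(0)}$ by the number of all long-cycle vertices of the cluster, since $\sum_z\P_{p_c}(0\conn z,\ z\in\mbox{l.c.})$ is again of order $V^{1/3}$ (no BK factorization). The paper instead uses the set $\mathcal I$ of $z$ for which $\{0\conn z\}\circ\{z\mbox{ is in a long cycle}\}$ occurs \emph{disjointly}, and proves the combinatorial lemma that removing the at most $2d|\mathcal I|$ edges adjacent to $\mathcal I$ destroys every long cycle of $\mathcal C_{\sT}(0)$ (any long cycle in the cluster contains a vertex joined to $0$ by a path edge-disjoint from the cycle), so $Y_{\mathcal C_{\sT}(0)}\leq 2d|\mathcal I|$; the built-in disjointness then lets the BK inequality give $\E_{p_c}|\mathcal I|\leq\E_{\sT,p_c}|\mathcal C_{\sT}(0)|\cdot\P_{p_c}(0\mbox{ is in a long cycle})\leq CV^{1/3}\cdot CV^{-2/3}=CV^{-1/3}$. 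This disjointness device is exactly the factor-$V^{2/3}$ gain your proposal is missing.
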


\begin{theorem}[Non-trivial existence of long cycles]
\label{thmExisNCC}
Assume (\ref{prop2pf}).\\
(a) There exists $c>0$ such that for all integers $r\geq 1$,
\[
{\mathbb P}_{\sT, p_c}\left(\exists \mbox{ a long cycle of length }>c V^{1/3}\right)
> c .\
\]
(b) For any $\delta>0$ there exists $c>0$ such that for all integers $r\geq 1$,
\[
{\mathbb P}_{\sT, p_c}(Y_\delta = 0) > c .\
\]
In other words, with positive probability uniformly in $r$, there are no long cycles in clusters of size $>\delta V^{2/3}$.
\end{theorem}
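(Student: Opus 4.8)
The plan is to derive part~(a) from the first‑moment estimate of Theorem~\ref{thmNCCExponent} together with a matching second‑moment bound, and part~(b) from Harris' inequality; in both cases the technical core is the statement that a cluster of size larger than $\delta V^{2/3}$ carries only $O_\delta(1)$ long cycles.

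\emph{Part (a).} Let $Z$ be the number of vertices of $\torus$ lying on an open long cycle, and let $Z_{\le k}$, $Z_{>k}$ be the same counts restricted to open long cycles of length $\le k$, resp.\ $>k$, so that $\EE_{\sT,p_c}[Z_{\le k}]=V\,\PP_{\sT,p_c}(\LC{k})$. Theorem~\ref{thmNCCExponent} gives $\EE_{\sT,p_c}[Z]\ge c_1V^{1/3}$, and its upper bound $\PP_{\sT,p_c}(0\text{ is in an open long cycle})\le CV^{-2/3}$ is naturally obtained by summing, over dyadic length scales $[2^j,2^{j+1}]$, a contribution of order $2^j/V$; the same estimate gives $\PP_{\sT,p_c}(\LC{k})\le Ck/V$, i.e.\ $\EE_{\sT,p_c}[Z_{\le k}]\le Ck$, for every $k$. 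Choosing $\vep>0$ with $2C\vep<c_1$ we get $\EE_{\sT,p_c}[Z_{>\vep V^{1/3}}]\ge\tfrac12c_1V^{1/3}$, and, given a second‑moment bound $\EE_{\sT,p_c}[Z^2]\le CV^{2/3}$, the Paley--Zygmund inequality gives
\[
\PP_{\sT,p_c}\bigl(\exists\text{ an open long cycle of length}>\vep V^{1/3}\bigr)=\PP_{\sT,p_c}\bigl(Z_{>\vep V^{1/3}}\ge1\bigr)\ \ge\ \frac{\bigl(\EE_{\sT,p_c}[Z_{>\vep V^{1/3}}]\bigr)^2}{\EE_{\sT,p_c}[Z^2]}\ \ge\ c>0 .
\]
Part~(a) follows with the constant $\min(\vep,c)$.

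\emph{Part (b).} Enumerate by $\gamma$ the finitely many subgraphs of $\torus$ that are long cycles, fix a vertex $x_\gamma$ of each, and set $I_\gamma=\{\gamma\text{ open}\}\cap\{|{\mathcal C}_{\sT}(x_\gamma)|>\delta V^{2/3}\}$. Then $\{Y_\delta\ge1\}=\bigcup_\gamma I_\gamma$, and each $I_\gamma$ is increasing, so $\{Y_\delta=0\}=\bigcap_\gamma I_\gamma^{\,c}$ is a decreasing event; Harris' inequality gives
\[
\PP_{\sT,p_c}(Y_\delta=0)\ \ge\ \prod_\gamma\bigl(1-\PP_{\sT,p_c}(I_\gamma)\bigr)\ \ge\ \exp\Bigl(-2\sum_\gamma\PP_{\sT,p_c}(I_\gamma)\Bigr)\ =\ \exp\bigl(-2\,\EE_{\sT,p_c}[N_\delta]\bigr),
\]
where $N_\delta$ is the number of open long cycles lying inside clusters of size $>\delta V^{2/3}$ and we used $\PP_{\sT,p_c}(I_\gamma)\le p_c^{|\gamma|}\le\tfrac12$. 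Hence it suffices to prove $\EE_{\sT,p_c}[N_\delta]\le C(\delta)<\infty$ uniformly in $r$, which yields part~(b) with $c(\delta)=e^{-2C(\delta)}$.

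\emph{The main obstacle.} The estimate $\EE_{\sT,p_c}[N_\delta]\le C(\delta)$ — equivalently, the corresponding control of the diagonal ``same‑cluster'' terms in $\EE_{\sT,p_c}[Z^2]$ used in part~(a) — is where the real work lies. Since a connected graph with surplus $s$ has at most $2^s$ cycles, $N_\delta\le\sum_{{\mathcal C}:\,|{\mathcal C}|>\delta V^{2/3}}2^{s({\mathcal C})}$; writing this cluster‑sum as a vertex‑sum weighted by $1/|{\mathcal C}|$ and using that $\EE_{\sT,p_c}[|{\mathcal C}_{\sT}(0)|]$ is of order $V^{1/3}$ (so $\PP_{\sT,p_c}(|{\mathcal C}_{\sT}(0)|>\delta V^{2/3})\le C(\delta)V^{-1/3}$), one is reduced to showing that the surplus of a cluster conditioned to have size $>\delta V^{2/3}$ has a bounded exponential moment, uniformly in $r$. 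This is the high‑dimensional‑torus analogue of the boundedness of the surplus of the largest critical Erd\H{o}s--R\'enyi component. I expect it to be established by decomposing a long cycle into two edge‑disjoint open paths between a pair of its vertices, applying the BK inequality with the path‑lengths recorded, and summing against the torus two‑point function via the length‑constrained connectivity estimates for $\Z^d$ of Section~\ref{sec-extra}; the geometric ``long'' constraint is essential, since (by Theorem~\ref{thmNCCExistence}) it forces such a cycle to have length of order $V^{1/3}$ and hence to live in a cluster of size of order $V^{2/3}$, which is exactly what keeps $N_\delta$ — and the second moment of $Z$ — of the correct order.
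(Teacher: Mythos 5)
Your part (a) skeleton coincides with the paper's first-moment step, but the whole weight of the argument sits in the second-moment bound $\EE_{\sT,p_c}[Z^2]\le CV^{2/3}$, which you assume rather than prove. In the paper this is exactly where the work is: one splits $\sum_{x,y}\PP_{\sT,p_c}(x,y\text{ in long cycles})$ according to whether the two events occur disjointly (BK plus Theorem~\ref{thmNCCExponent}) or the cycles overlap, and the overlapping contribution is controlled by a separate lemma which extracts, from the overlap structure, five disjoint connections among $x,y,u,v$ of which at least one must realize a $\Delta_{r/12}$-type long-range connection; that factor is bounded by $Cn^{2-d}+CV^{-2/3}$ via \eqref{eq:torus2pf:n}, and the remaining diagram is summed using the torus triangle $\nabla_{\sT,p_c}<\infty$ and $\EE_{\sT,p_c}|\mathcal C_{\sT}(0)|\le CV^{1/3}$. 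Nothing in your sketch supplies this, and the route you indicate for it (surplus of large clusters having bounded exponential moments) cannot: on the torus a positive fraction of vertices lie in short open cycles, so the surplus of a cluster of size $\ge\delta V^{2/3}$ is itself of order $V^{2/3}$, not $O(1)$ as on the critical ERRG.

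The same misconception makes your part (b) unworkable. Your Harris-inequality reduction is formally fine, but the quantity it requires, $\EE_{\sT,p_c}[N_\delta]\le C(\delta)$ where $N_\delta$ counts \emph{all} open long cycles in large clusters, is false (and certainly not provable by these methods): given one open long cycle of length of order $V^{1/3}$, each of the order-$V^{1/3}$ short open detours (e.g.\ open unit squares sharing an edge with it) can be inserted or not, producing exponentially many distinct open long cycles; equivalently, the expected number of open long polygons through a vertex behaves like $\sum_\ell(\mu p_c)^\ell$ with $\mu p_c\ge1$, which is not summable. The bound $N_\delta\le\sum_{\mathcal C}2^{s(\mathcal C)}$ only makes this worse. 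This is precisely why the theorem is phrased in terms of $Y_\delta$, the minimal number of edge removals destroying all long cycles, rather than the number of long cycles. The paper's proof of (b) is of a different nature: a two-stage depth-first exploration of each cluster singles out a set $Z_x$ of \emph{unexplored} surplus edges such that (i) every long cycle of the cluster uses one of them, and (ii) conditionally on the explored part, the edges of $Z_x$ are independent; then $\PP_{\sT,p_c}(Y_\delta=0)\ge(1-p_c)^{\EE[\cdot]}$ by Jensen, and $\EE_{p_c}|Z_0|\le CV^{-1/3}$ follows from the BK bound on the set $\mathcal I$ of vertices $z$ with $\{0\conn z\}\circ\{z\text{ in a long cycle}\}$ (Lemma~\ref{l:YIJ}). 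You would need to replace your counting of long cycles by some such ``small witness set of edges'' argument for the proof to go through.
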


\subsection{Related results on critical percolation}
\label{sec-extra}
In this section, we state a few results about critical Bernoulli percolation 
on $\Z^d$ and $\torus$ that are interesting in their own right,
for the ease of future reference.

\begin{theorem}[Connections inside balls]
\label{thm:ConnInsideBalls}
Assume (\ref{prop2pf}). There exists $C<\infty$ such that\\
(a) for all $n\geq 1$,
\[
\sum_{x\in \partial \Qn{n}}{\mathbb P}_{\sZ, p_c}(0\leftrightarrow x\mbox{ in } \Qn{n}) \leq C ,\
\]
(b) for all $\varepsilon>0$,
    \begin{equation}\label{eq:ConnInsideBalls(b)}
    \limsup_{n\to\infty}
    \sum_{x\in \partial \Qn{n}}
    {\mathbb P}_{\sZ, p_c}(0\stackrel{\leq \varepsilon n^2}\longleftrightarrow x\mbox{ in } \Qn{n})
    \leq C\sqrt{\varepsilon} .\
    \end{equation}
\end{theorem}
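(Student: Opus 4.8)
The plan is to prove the two parts with part~(b) refining the argument for part~(a), the essential new input in both being that confinement inside $\Qn{n}$ makes the cluster of the origin hit $\partial\Qn{n}$ far less often than the free two-point function predicts.

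For part~(a), write the left-hand side as $\E_{\sZ,p_c}\big[\#\{x\in\partial\Qn{n}\colon 0\leftrightarrow x\text{ in }\Qn{n}\}\big]$ and reduce to a single face, say $F=\{y\in\Qn{n}\colon y_1=\lfloor n\rfloor\}$, since $\partial\Qn{n}$ is a union of $2d$ faces. The elementary but crucial observation is that any open path in $\Qn{n}$ from $0$ to a vertex $x\in F$, after its last visit to the hyperplane $\{y_1\le\lfloor n\rfloor-1\}$, stays inside the hyperplane $\{y_1=\lfloor n\rfloor\}$; percolation restricted to that hyperplane is nearest-neighbour percolation on a copy of $\Z^{d-1}$ at the parameter $p_c=p_c(d)$, which is \emph{strictly subcritical} since $p_c(d)<p_c(d-1)$. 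Hence in-face connections have exponentially decaying range, and via the BK inequality the face count is bounded, up to a constant depending only on $d$, by the expected number of ``entry vertices'' $\sum_{z\colon z_1=\lfloor n\rfloor-1,\ |z|\le n}\P_{\sZ,p_c}(0\leftrightarrow z\text{ in }\Qn{n})$.

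The remaining step — showing that this last sum is bounded uniformly in $n$ — is the heart of the matter, and I expect it to be the main obstacle, since bounding it crudely by $\sum_{z}\tau_{\sZ,p_c}(z)$ again only yields $O(n)$. Here I would genuinely use the confinement, combining the one-arm upper bound $\P_{\sZ,p_c}(0\leftrightarrow\partial\Qn{n})\le C n^{-2}$ of Kozma and Nachmias (which follows from \eqref{prop2pf}) with an exploration/regeneration argument: conditionally on $\{0\leftrightarrow\partial\Qn{n}\}$ one reveals the cluster along a backbone towards the boundary, keeps track of the excursions of this cluster into the outermost layer of $\Qn{n}$, and shows — using that each such excursion is itself confined and hence, by the subcritical-hyperplane estimate above applied at every scale, contributes only boundedly many new near-boundary vertices on average — that the cluster has in expectation $O(n^2)$ vertices within distance one of $\partial\Qn{n}$ given that it reaches $\partial\Qn{n}$. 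Multiplying by $\P_{\sZ,p_c}(0\leftrightarrow\partial\Qn{n})\asymp n^{-2}$ then gives the $O(1)$ bound. The delicate point is to organise the conditioning so that the BK/tree-graph estimates used inside the exploration do not over-count boundary vertices.

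For part~(b) the strategy is to run the same argument while carrying a length budget $L=\varepsilon n^2$, the gain $\sqrt{\varepsilon}$ arising from the fact that a path from $0$ to $\partial\Qn{n}$ inside $\Qn{n}$ typically has length of order $n^2$, so forcing its length below $\varepsilon n^2$ is an atypical event of probability of order $\sqrt{\varepsilon}$ in the relevant averaged sense. Concretely, I would decompose a path of length $\le L$ into its successive crossings of the $\lceil 1/\sqrt{\varepsilon}\rceil$ concentric annuli of width $\sqrt{\varepsilon}\,n$, so that the total length $L$ is shared among crossings that are on average below the diffusive length $(\sqrt{\varepsilon}\,n)^2=\varepsilon n^2$ of a single annulus; by a restricted-connection version of part~(a) at scale $\sqrt{\varepsilon}\,n$, together with the high-dimensional intrinsic one-arm estimates (valid in the $n\to\infty$ limit, which is where the $\limsup$ enters), each subdiffusive crossing costs a multiplicative factor strictly below $1$, and the product over the $\sim 1/\sqrt{\varepsilon}$ annuli produces the factor $\sqrt{\varepsilon}$. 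I expect the main difficulty here to be the bookkeeping: distributing the length budget over the $\sim 1/\sqrt{\varepsilon}$ scales and compounding the per-scale losses while sacrificing only an overall constant, and checking that for large $n$ all scales are genuinely macroscopic so that the scale-$\sqrt{\varepsilon}\,n$ estimates apply.
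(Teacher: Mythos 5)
Your argument for part (a) does not close the essential gap. The face decomposition together with in-face subcriticality only converts $\sum_{x\in \partial \Qn{n}}{\mathbb P}_{\sZ, p_c}(0\leftrightarrow x\mbox{ in } \Qn{n})$ into the completely analogous sum over the layer $\{z\colon z_1=\lfloor n\rfloor-1\}\cap\Qn{n}$, so nothing is gained; and the step you yourself call ``the heart of the matter'' is described but not proved. Indeed, given the one-arm bound ${\mathbb P}_{p_c}(0\leftrightarrow\partial\Qn{n})\asymp n^{-2}$, the claim that conditionally on $\{0\leftrightarrow\partial\Qn{n}\}$ the cluster has $O(n^2)$ vertices near the boundary is \emph{equivalent} to part (a), and the mechanism you invoke for it --- the subcritical-hyperplane estimate ``applied at every scale'' --- cannot deliver it: connections between near-boundary vertices of the restricted cluster are not confined to the outermost layer, they re-enter the interior of $\Qn{n}$ to arbitrary depth and return, and controlling exactly these excursions is the difficulty (it is what makes the restricted boundary two-point function smaller, on average, by a further factor of order $1/n$ than the unrestricted $n^{2-d}$). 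The paper's proof is of a different nature: it uses the Kozma--Nachmias machinery of $K$-regular boundary vertices and a double count of pivotal admissible pairs $(x,y)$, where $y\in\partial\Qn{n}$ is $K$-regular, $0\leftrightarrow y$ in $\Qn{n}$, $y\leftrightarrow x$, and the boundary edge at $y$ is pivotal for $0\leftrightarrow x$: by [KN, Lemma~5.1] the expected number of such pairs with $x\in\Qn{n}(y)$ is at least $C(K)\,n^2$ times the expected number of connected $K$-regular boundary vertices, while it is at most $\sum_{x\in\Qn{2n}}\tau_{\sZ,p_c}(x)\le Cn^2$ by \eqref{propSizeCn}; the $K$-irregular vertices are absorbed using [KN, Lemma~4.3] together with $\sum_{y\in\partial\Qn{n}}{\mathbb P}_{p_c}(0\leftrightarrow y\mbox{ in }\Qn{n})\ge 1$. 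Your sketch contains no substitute for this (or any comparable) quantitative input.

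Part (b) has the same problem, compounded. In the paper the factor $\sqrt\varepsilon$ comes from one specific estimate, a variant of [KN(IIC), Lemma~2.5], namely ${\mathbb P}_{p_c}\big(0\stackrel{\leq \varepsilon n^2}\longleftrightarrow \partial \Qn{n},\ 0\leftrightarrow x\big)\le C\sqrt\varepsilon\,{\mathbb P}_{p_c}(0\leftrightarrow x)$ for $|x|\ge n^2$, fed into the same admissible-pair double count (now with $x\in\Qn{2n^2}\setminus\Qn{n^2}$, compared against $n^4$), plus a bootstrap $F(\varepsilon)\le C\sqrt\varepsilon+\tfrac12 F(2\varepsilon)$ to handle irregular vertices; the $\limsup$ and part (a) enter only to start the bootstrap. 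Your annulus decomposition replaces this by the assertion that each sub-diffusive crossing ``costs a multiplicative factor strictly below $1$''; no such restricted-length crossing estimate is established, and the bookkeeping as described is not even consistent with the target: a constant loss per scale over $\sim 1/\sqrt\varepsilon$ annuli would give a bound of order $\lambda^{1/\sqrt\varepsilon}$, far smaller than $\sqrt\varepsilon$, while in reality the length budget may be spent very unevenly across annuli, so the scales cannot be treated as independent constant-cost factors. In short, both parts of the proposal are programmatic: the statements you would need --- the conditional $O(n^2)$ boundary-volume bound in (a), and a quantitative sub-diffusive crossing estimate with a correct way of compounding it in (b) --- are precisely the hard content of the theorem and are asserted rather than proved.
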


\begin{theorem}[Short connections]
\label{thm:ShortConn:r}
Assume (\ref{prop2pf}). There exists $C<\infty$ such that for any $z\in\Z^d$ and positive integers $r$ and $k$, 
\begin{equation}\label{eq:ShortConn:r}
\sum_{x\in\Z^d,~x\stackrel{r}\sim z,~|x|\geq \lfloor r/4\rfloor} 
{\mathbb P}_{\sZ, p_c}\left(0\stackrel{\leq k}\longleftrightarrow x\mbox{ in } \Z^d\right)
\leq 
\left\{
\begin{array}{ll}
\frac{Ck}{r^d} &\mbox{if } k\geq r^2 ,\\
Cr^{2-d}\cdot \e^{-\frac{r}{C\sqrt{k}}} &\mbox{if }k \leq r^2 ,\
\end{array}
\right.
\end{equation}
and for $k\geq r^2$, 
\begin{equation}\label{eq:ShortConn:3r}
\sum_{w\stackrel{r}\sim z,~|w|\geq 3r/4}\sum_{u,v}
{\mathbb P}_{\sZ, p_c}\left(0\stackrel{\leq k}\longleftrightarrow u\mbox{ in } \Z^d\right)
{\mathbb P}_{\sZ, p_c}\left(u\stackrel{\leq k}\longleftrightarrow v\mbox{ in } \Z^d\right)
{\mathbb P}_{\sZ, p_c}\left(v\stackrel{\leq k}\longleftrightarrow w\mbox{ in } \Z^d\right)
\leq \frac{Ck^3}{r^d} .\
\end{equation}
\end{theorem}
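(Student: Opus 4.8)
All three estimates are bounds on the \emph{length-restricted two-point function} $f_k(x):=\P_{\sZ,p_c}(0\stackrel{\leq k}{\longleftrightarrow}x)$ summed over the coset $z+r\Z^d$; note that $f_k$ is supported in $\Qn{k}(0)$, since a path of at most $k$ edges from $0$ cannot leave $\Qn{k}(0)$. The plan is to first establish the pointwise bound
\begin{equation}\label{eq:starbound}
f_k(x)\ \leq\ C\,(1+|x|)^{2-d}\,\e^{-|x|/(C\sqrt k)},\qquad x\in\Z^d,\ k\geq1,
\end{equation}
(for $|x|\leq\sqrt k$ this is just the upper bound in \eqref{prop2pf}; for $|x|\gg\sqrt k$ it quantifies that a $k$-edge path is very unlikely to travel much beyond distance $\sqrt k$), and then to deduce the three displayed inequalities from \eqref{eq:starbound} by elementary summation over $z+r\Z^d$ --- and, for \eqref{eq:ShortConn:3r}, a triple convolution. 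I expect the proof of \eqref{eq:starbound} to be the crux.

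For \eqref{eq:starbound} I would combine two multi-scale decompositions of a self-avoiding open path of at most $k$ edges from $0$ to $x$ (the case $|x|<2\sqrt k$ being immediate from \eqref{prop2pf}, so assume $n:=|x|\geq 2\sqrt k$ and $n\leq k$). \emph{Spatial halving}: let $w$ be the first vertex of the path on $\partial\Qn{\lfloor n/2\rfloor}(0)$; the two sub-paths are edge-disjoint, each has $\leq k$ edges, and the first lies in $\Qn{\lfloor n/2\rfloor}(0)$, so the BK inequality gives $f_k(x)\leq\sum_{w\in\partial\Qn{\lfloor n/2\rfloor}}\P_{\sZ,p_c}(0\stackrel{\leq k}{\longleftrightarrow}w\text{ in }\Qn{\lfloor n/2\rfloor})\,f_k(x-w)$. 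Since $k\leq\lfloor n/2\rfloor^2$, Theorem~\ref{thm:ConnInsideBalls}(b) (with $\varepsilon=k/\lfloor n/2\rfloor^2$) bounds $\sum_w$ of the first factor by $C\sqrt\varepsilon\leq C'\sqrt k/n$ (uniformly once $n$ is large; bounded ranges of $k,n$ being absorbed into constants); as $|x-w|\geq\lceil n/2\rceil$, writing $\bar f_k(m):=\sup_{|v|\geq m}f_k(v)$ this yields $\bar f_k(n)\leq (C'\sqrt k/n)\,\bar f_k(\lceil n/2\rceil)$ for $n\geq2\sqrt k$, and iterating down to scale $\asymp\sqrt k$ (where $\bar f_k\leq D_2(1+\sqrt k)^{2-d}$ by \eqref{prop2pf}) the geometrically growing prefactors multiply to give $f_k(x)\leq C_A(1+|x|)^{2-d}(\sqrt k/|x|)^A$ for every fixed $A$. \emph{Renewal into blocks of radius $L:=\lceil\sqrt k\rceil$}: cut the path at successive first visits to $\partial\Qn{L}(\cdot)$, obtaining $\gtrsim|x|/\sqrt k$ edge-disjoint crossings each confined to a radius-$L$ ball; a Chernoff weighting $\mathbbm{1}[\sum_i\ell_i\leq k]\leq\e^{\theta k}\prod_i\e^{-\theta\ell_i}$ with $\theta\asymp L^{-2}$ (and a large implied constant) turns Theorem~\ref{thm:ConnInsideBalls}(a) and (b) into a per-crossing weight $\leq\rho<1$, so this mechanism contributes $\e^{\theta k}\rho^{|x|/(2\sqrt k)}\leq C\e^{-|x|/(C\sqrt k)}$. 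The delicate point --- which I expect to require running both scales through a single induction rather than merely intersecting the two bounds --- is to keep the spatial prefactor $(1+|x|)^{2-d}$ while gaining the exponential factor: the spatial decay should come from telescoping Theorem~\ref{thm:ConnInsideBalls}(a) across the crossings (the $i$-th crossing carrying a factor comparable to $((i+1)/i)^{2-d}$ on top of its short-time cost), whose product telescopes to the required power of $|x|$.

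Granting \eqref{eq:starbound}, the three bounds follow routinely by dyadic summation, using that the coset $z+r\Z^d$ has $\asymp(\ell/r)^d$ points in $\{|x|\asymp\ell\}$ for $\ell\geq r$ (and $O(1)$ points for $r/4\leq\ell<r$). For \eqref{eq:ShortConn:r} with $k\geq r^2$: the shell at scale $\ell$ contributes $\asymp(\ell^2/r^d)\,\e^{-\ell/(C\sqrt k)}$, and since $\sqrt k\geq r$ the dyadic sum over $\ell\in[r/4,k]$ peaks at $\ell\asymp\sqrt k$, giving $\leq Ck/r^d$ (the $O(1)$ points with $|x|<r$ contributing $\lesssim r^{2-d}\leq k/r^d$). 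For \eqref{eq:ShortConn:r} with $k\leq r^2$: now $\sqrt k\leq r$, so the same sum over $\ell\geq r/4$ is, using $k\leq r^2$, bounded by $C r^{2-d}\e^{-r/(C\sqrt k)}$. For \eqref{eq:ShortConn:3r}: by translation invariance the triple sum equals $\sum_{w\in z+r\Z^d,\,|w|\geq3r/4}(f_k*f_k*f_k)(w)$; using \eqref{eq:starbound} in the form $f_k\leq Cg_k$ with $g_k(y)=(1+|y|)^{2-d}\e^{-|y|/(C\sqrt k)}$, the inequality $|y_1|+|y_2|+|y_3|\geq|y_1+y_2+y_3|$ pulls the exponential weight out of the convolution while the standard high-dimensional convolution estimate (valid since $d>6$) handles the polynomial part, yielding $(g_k*g_k*g_k)(w)\leq C(1+|w|)^{6-d}\e^{-|w|/(C\sqrt k)}$; summing this over the coset as above (the scale $\ell\asymp\sqrt k\geq r$ again dominating) gives $\leq Ck^3/r^d$.
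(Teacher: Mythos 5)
Your overall architecture---first a pointwise bound $\P_{\sZ,p_c}(0\stackrel{\leq k}\longleftrightarrow x)\leq C(1+|x|)^{2-d}\e^{-|x|/(C\sqrt k)}$, then summation over the coset $z+r\Z^d$ (and, for \eqref{eq:ShortConn:3r}, a triple convolution using $|u|+|v-u|+|w-v|\geq|w|$ and the $d>6$ convolution estimate)---is a legitimate alternative to the paper, and the summation/convolution reductions are fine; the pointwise bound itself is also true. But your proposed proof of that bound, which you correctly identify as the crux, has two concrete gaps. First, both the spatial-halving recursion and the Chernoff/renewal step invoke Theorem~\ref{thm:ConnInsideBalls}(b) with $\varepsilon$ depending on the scale (down to $\varepsilon\asymp k/n^2$, or integrated over all crossing lengths $t\leq L^2$), whereas \eqref{eq:ConnInsideBalls(b)} is a $\limsup_{n\to\infty}$ statement for each \emph{fixed} $\varepsilon$: it does not give the uniform bound $\sum_{y\in\partial\Qn{n}}\P_{\sZ,p_c}(0\stackrel{\leq t}\longleftrightarrow y\mbox{ in }\Qn{n})\leq C\sqrt t/n$ for all $t\leq n^2$ simultaneously, which is exactly what your layer-cake/Chernoff computation needs. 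Second, the step you flag as delicate---keeping the prefactor $(1+|x|)^{2-d}$ while gaining the exponential, via per-crossing factors ``comparable to $((i+1)/i)^{2-d}$'' telescoping out of Theorem~\ref{thm:ConnInsideBalls}(a)---is not supported by the available lemmas: part (a) controls the sum over the whole sphere $\partial\Qn{n}$ and provides no radial gain per crossing, and no actual induction combining the two mechanisms is given. As written, the key estimate is asserted rather than proved.

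The paper's proof shows that both difficulties can be bypassed, and that no pointwise bound is needed. It fixes a single $\varepsilon$ with $C\sqrt\varepsilon<1/2$, sets $A=1/\sqrt\varepsilon$, and uses \eqref{eq:ConnInsideBalls(b)} only along the one sequence of radii $A\sqrt k$ (so the $\limsup$ formulation suffices for all large $k$, small $k$ being absorbed into constants): each crossing of a ball of radius $A\sqrt k$, given the \emph{full} time budget $k$ rather than a share of it, costs at most $1/2$ after summing over the sphere. Chaining $\asymp|x|/(A\sqrt k)$ such edge-disjoint crossings by the BK inequality produces the exponential factor with no Chernoff weighting, and the polynomial factor enters by a much simpler device than the one you sketch: the last leg of the chain is left \emph{unrestricted} and has length at least $r/8$, so it is bounded by \eqref{prop2pf} (or, for the coset sums, by \eqref{propSizeCnr}). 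The same trick---confine the crossings to $\Qn{|x|/2}$ and pay $C|x|^{2-d}$ for one long unrestricted leg---would prove your pointwise bound and thereby rescue your route; the paper instead bounds the coset sums directly, and obtains \eqref{eq:ShortConn:3r} not by a convolution estimate but by a case split according to which of the three displacements is at least $r/4$, applying \eqref{eq:ShortConn:r} to that factor and \eqref{KN-ball} to the remaining two sums. So your plan is completable, but the engine driving the pointwise estimate must be replaced by something like the paper's fixed-scale chaining with one long unrestricted leg.
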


We note that in the special case where $r=1$, \eqref{eq:ShortConn:r} implies the result of \cite[Theorem~1.2(i)]{KN(IIC)} that 
	\begin{equation}
	\label{KN-ball}
	\sum_{x\in\Z^d} 
	{\mathbb P}_{\sZ, p_c}\left(0\stackrel{\leq k}\longleftrightarrow x\mbox{ in } \Z^d\right)
	\leq Ck.
	\end{equation}

\begin{theorem}[Torus two-point function]
\label{thmTwoPointTorus}
There exists $C<\infty$ such that
for all $x\in {\mathbb T}_r^d$,
    \begin{equation}\label{eq:torus2pf}
    \tau_{\sT,p_c}(x)\leq \tau_{\sZ,p_c}(x)+ C V^{-2/3},
    \end{equation}
and for all positive integers $n<r/2$, 
\begin{equation}\label{eq:torus2pf:n}
\sup_{x\in {\mathbb T}_r^d}
{\mathbb P}_{\sT,p_c}\left(0\leftrightarrow x \text{ by a path which visits }\partial\Qn{n}\right)
\leq Cn^{2-d} + C V^{-2/3}.
\end{equation}
\end{theorem}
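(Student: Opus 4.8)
\textit{Proof proposal.} Both bounds will come from the same circle of ideas: a coupling between percolation on $\torus$ and on $\Z^d$ via the projection $\pi\colon\Z^d\to\torus$, together with the fact that connections that cannot ``wind around'' $\torus$ are honest $\Z^d$ connections. The plan is to prove \eqref{eq:torus2pf:n} and to obtain \eqref{eq:torus2pf} from it. For \eqref{eq:torus2pf}, put $n_0=\lfloor r/4\rfloor$; since $n_0<r/2$, the ball $\Qn{n_0}(0)$ induces isomorphic subgraphs of $\torus$ and of $\Z^d$, so any open path from $0$ to $x$ that stays inside $\Qn{n_0}(0)$ is already realised in $\Z^d$, and hence
\[
\tau_{\sT,p_c}(x)\le\tau_{\sZ,p_c}(x)+{\mathbb P}_{\sT,p_c}\big(0\leftrightarrow x\text{ by a path visiting }\partial\Qn{n_0}\big).
\]
Applying \eqref{eq:torus2pf:n} with $n=n_0\asymp r$ and using $r^{2-d}\le r^{-2d/3}=V^{-2/3}$ (valid because $d\ge6$) yields \eqref{eq:torus2pf}. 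Note also that if $|x|\ge n$ then \eqref{eq:torus2pf:n} is already contained in \eqref{eq:torus2pf} via $\tau_{\sZ,p_c}(x)\le D_2|x|^{2-d}\le D_2 n^{2-d}$, so the content of \eqref{eq:torus2pf:n} lies in controlling, for $|x|<n$, open paths from $0$ to $x$ that nevertheless reach $\partial\Qn{n}(0)$.

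To bound ${\mathbb P}_{\sT,p_c}(0\leftrightarrow x\text{ by a path visiting }\partial\Qn{n})$, let $w$ be the first vertex of $\partial\Qn{n}(0)$ on such a path; its initial segment stays in $\Qn{n}(0)$ and is edge-disjoint from the terminal segment, so the van den Berg--Kesten inequality gives
\[
{\mathbb P}_{\sT,p_c}\big(0\leftrightarrow x\text{ by a path visiting }\partial\Qn{n}\big)\le\sum_{w\in\partial\Qn{n}(0)}{\mathbb P}_{\sZ,p_c}\big(0\leftrightarrow w\text{ in }\Qn{n}(0)\big)\,\tau_{\sT,p_c}(w,x),
\]
where Theorem~\ref{thm:ConnInsideBalls}(a) bounds $\sum_w{\mathbb P}_{\sZ,p_c}(0\leftrightarrow w\text{ in }\Qn{n}(0))\le C$. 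One then re-runs this decomposition on each $\tau_{\sT,p_c}(w,x)$, splitting at the first visit of a shell $\partial\Qn{r/4}(w)$; the recursion closes after a bounded number of steps, since each step displaces the reference vertex by $\ge r/4$ in $\torus$, and a connection that never visits such a shell is, again by the locality of balls of radius $<r/2$, an honest $\Z^d$ connection handled by \eqref{prop2pf}. Keeping track, at each step, of the number of edges used to traverse a shell lets one invoke the length-restricted bound \eqref{eq:ConnInsideBalls(b)} rather than Theorem~\ref{thm:ConnInsideBalls}(a), producing genuinely small factors; the upshot is a bound in terms of sums of products of length-restricted $\Z^d$ connectivities of the type in Theorem~\ref{thm:ShortConn:r}.

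The contribution of a winding path of length $\le k$ is rewritten, after lifting via $\pi$, as a sum over a coset of $r\Z^d$ of $\Z^d$ connections of length $\le k$ reaching sup-distance $\ge\lfloor r/4\rfloor$, which \eqref{eq:ShortConn:r} bounds by $Cr^{2-d}\e^{-r/(C\sqrt k)}$ when $k\le r^2$ and by $Ck/r^d$ when $k\ge r^2$, the triple-sum version \eqref{eq:ShortConn:3r} being needed when an intermediate branch vertex of the connection is also far from the origin. Summing the former over $k\le r^2$ already costs only $O(r^{2-d})=O(V^{-2/3})$. The main obstacle is the contribution of genuinely long open paths: one must truncate the intrinsic length at the scale $k\asymp V^{1/3}$ — the order of the diameter of large critical clusters on $\torus$ — and show that the tail beyond this scale contributes only $O(V^{-2/3})$. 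Here one uses the known random-graph asymptotics for critical torus percolation, $\chi_{\sT}(p_c)\asymp V^{1/3}$ and $|{\mathcal C}_{\rm max}|\asymp V^{2/3}$, to bound the relevant cluster-size and diameter tails; a crude Markov bound is too weak and would only give $O(V^{-1/3})$. Finally, the apparent circularity above (using a bound on $\tau_{\sT,p_c}$ to bound $\tau_{\sT,p_c}$) is removed by bootstrapping: start from $\tau_{\sT,p_c}\le1$, feed it in to obtain $\tau_{\sT,p_c}(y)\le C(1\vee|y|)^{2-d}+o(1)$, and iterate until the error term reaches $CV^{-2/3}$.
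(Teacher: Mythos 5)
Your reduction of \eqref{eq:torus2pf} to \eqref{eq:torus2pf:n} with $n_0=\lfloor r/4\rfloor$ is fine, and your first BK decomposition at $\partial\Qn{n}$ together with Theorem~\ref{thm:ConnInsideBalls}(a) is exactly the step the paper uses (for \eqref{eq:torus2pf:n}). But there is a genuine gap at the heart of the argument: the contribution of connections whose intrinsic length exceeds the scale $\asymp V^{1/3}$ is never actually bounded. You correctly note that a crude Markov bound only yields $O(V^{-1/3})$, but the appeal to ``cluster-size and diameter tails'' coming from $\chi_{\sT}(p_c)\asymp V^{1/3}$ and $|{\mathcal C}_{\rm max}|\asymp V^{2/3}$ does not produce the required $O(V^{-2/3})$: those are averaged or global statements and do not control the pointwise probability that $0$ and $x$ are connected only by intrinsically long paths. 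The mechanism that makes this work, and which is absent from your proposal, is the intrinsic-metric one-arm estimate \eqref{KN-intrinsic}, $\sup_G{\mathbb P}_{p_c}(\partial B_{\sT,k}^{\sss G}(x)\neq\varnothing)\leq C/k$, applied \emph{twice}: if $0\leftrightarrow x$ in $\torus$ but not within $3k$ steps, $k=\lfloor V^{1/3}\rfloor$, then $\partial B_{\sT,k}(0)\neq\varnothing$ and $\partial B_{\sT,k}(x)\neq\varnothing$ with the two intrinsic balls disjoint, and the uniformity in the subgraph $G$ allows the two factors $C/k$ to be multiplied, giving $(C/k)^2\leq CV^{-2/3}$. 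Without a two-factor argument of this type your truncation step does not close; this is precisely the term that produces the $V^{-2/3}$ error in \eqref{eq:torus2pf}.

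A second problem is the recursion you run on $\tau_{\sT,p_c}(w,x)$. On the torus, successive displacements by $r/4$ do not ``run out of space'', so the recursion does not terminate after a bounded number of steps; and since each shell-crossing sum is only bounded by a constant $C$ (Theorem~\ref{thm:ConnInsideBalls}(a)), not by something strictly less than $1$, neither the plain iteration nor the proposed bootstrap from $\tau_{\sT,p_c}\leq 1$ has a contraction making it converge. The ``genuinely small factors'' from \eqref{eq:ConnInsideBalls(b)} are only available once the total path length is controlled, which is again the unproved tail estimate above. (Also, ``lifting via $\pi$'' an open torus path to $\Z^d$ is not available for free, since the torus measure is not the projection of the $\Z^d$ measure; this must go through the coupling of Proposition~\ref{prCoupling}, constructed by an exploration, though that is easily repaired by citing it.) For comparison, the paper argues in the opposite order: it proves \eqref{eq:torus2pf} first by splitting at intrinsic length $3\lfloor V^{1/3}\rfloor$, bounding the short part via Proposition~\ref{prCoupling} and \eqref{eq:ShortConn:r} and the long part via the two one-arm factors, and then deduces \eqref{eq:torus2pf:n} from \eqref{eq:torus2pf} in a few lines by your first BK step, thereby avoiding the circularity you are forced to bootstrap around.
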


\subsection{Discussion}
\label{sec-dis}
In this section, we compare our results to those for the Erd\H{o}s-R\'enyi random graph (ERRG),
as proved, for example, by Aldous in \cite{Aldo97}, and formulate some open problems.

\paragraph{Cycle structure on the Erd\H{o}s-R\'enyi random graph.}
We refer to \cite{Aldo97} for the extensive literature on the cycle structure of the ERRG. 
The ERRG is obtained by removing each edge of the complete graph $K_n$ independently 
with probability $p$. On the critical ERRG, there is no distinction between long and short cycles, 
and the number of cycles of a cluster equals the tree excess of the cluster,
i.e., the minimal number of edges one needs to remove from the cluster 
in order for it to be a tree.
 
On the ERRG, within the critical window $p=(1+\lambda n^{-1/3})/n$ for some $\lambda\in \R$, 
the number of cycles of large clusters converges in distribution to a Poisson random variable with a 
{\it random} parameter. This random parameter
can be described as the \emph{area} of the cluster exploration process. Since this parameter is
a bounded random variable, in particular, each large cluster has a tree excess
that converges in distribution, and the probability that the $i^{\rm th}$ largest cluster 
does not contain any cycle is strictly positive for any $i\geq 1$ fixed. 

Since there is just a finite number of clusters
of size at least $\delta n^{2/3}$, this immediately implies that the probability
that there are no connected components of size at least $\delta n^{2/3}$
containing cycles is strictly positive. Further, all cycles have macroscopic length.
Indeed, the largest connected components in the ERRG have diameter of the order
$n^{1/3}$, and the length of cycles (when they exist) is also of the same order of magnitude.
Cycles play a crucial role in describing the scaling limit of the largest
critical clusters on the ERRG, as identified in \cite{AddBroGol10}. Indeed,
clusters locally look like trees, with cycles creating shortcuts between the
different branches of the tree. Since cycles have a macroscopic length, these
shortcuts are also macroscopic and thus the scaling limit of large critical clusters
on the ERRG within the critical window is {\it not} a tree.

We conclude that the main features of the scaling limit of the critical
ERRG are (a) the largest critical clusters being of size $O(n^{2/3})$,
with a non-concentrated limit; 
(b) the largest critical clusters being close to trees with 
at most a finite number of macroscopic cycles; 
(c) a non-trivial probability that there exists large clusters having cycles.

\paragraph{Cycle structure on high-dimensional tori.}
We next investigate percolation on high-dimensional tori.
In this case, of the above features (a-c) of the scaling limit of the critical ERRG, 
the feature (a) has been investigated in \cite{BCHSS05a, BCHSS05b, HH,HH1},
we focus on features (b) and (c) here.  While our results clearly are not
as strong as on the ERRG, they do establish the non-triviality of 
the probability of existence of cycles in large clusters as well
as bounds on their length. Based on our results, we see that a natural split
exists between cycles containing $O(V^{1/3})$ vertices that are truly
macroscopic, and short cycles that basically remain within a cube without leaving its
boundary. The former are essential in describing the scaling limit,
the latter vanish in the scaling limit. There is no middle ground.

Our results provide yet another argument why the scaling limit of 
critical percolation on high-dimensional
tori should be related to that for the critical ERRG.
More precisely, the scaling limit of large critical clusters
on the ERRG within the critical window is, or is not, a tree, each with positive 
probability. We see the same for 
large critical clusters on the high-dimensional torus, where with high probability,
there are no long cycles containing $o(V^{1/3})$ vertices in large cluster.
Thus, all long cycles are macroscopic, and will thus change the scaling limit
of large critical clusters, in a similar way as they do on the ERRG.

\paragraph{Open probems.}
We complete this section by formulating a few open problems.
The first extension deals with the values of $p$ within the so-called \emph{scaling window.}
The results in \cite{HH, HH1}, in conjunction with those in
\cite{BCHSS05a, BCHSS05b}, show that when $p=p_c(1+\varepsilon)$, where $V^{1/3}|\varepsilon|$
remains uniformly bounded, the largest clusters obey similar scaling as for $p=p_c$.
An open problem is to show that Theorems \ref{thmNCCExponent}--\ref{thmExisNCC}
remain valid throughout the scaling window. 
Another extension is to more general
high-dimensional tori, for example, to percolation on the hypercube as 
studied in \cite{BCHSS06, HN12}. An open problem is to prove that Theorems \ref{thmNCCExponent}--\ref{thmExisNCC}
also hold on the hypercube, where a cycle is defined to be \emph{long} when the number of 
edges in it is at least the random walk mixing time $n\log{n}$. The random walk mixing time
plays a crucial role in \cite{HN12} to identify the supercitical behavior of percolation on 
the hypercube.

\paragraph{Organization of this paper.}
This paper is organized as follows. In Section \ref{sec-prel}, we collect some
preliminary results that we use in the proof. In Section \ref{sec-pfthmNCCExponent},
we prove Theorem~\ref{thmNCCExponent}. In Section \ref{sec-PfthmNCCExistence}, we prove
Theorem~\ref{thmNCCExistence}. In Section \ref{sec-PfthmNumNCC_and_thmExisNCC}, we
prove Theorems~\ref{thm:Ytight} and \ref{thmExisNCC}. 
In Section \ref{sec-pf-thmShortArms}, we prove Theorem~\ref{thm:ConnInsideBalls}, in Section~\ref{sec:ShortConn:r} we prove Theorem~\ref{thm:ShortConn:r},
 and in Section~\ref{sec:thmTwoPointTorus} we prove Theorem~\ref{thmTwoPointTorus}.

\section{Preliminary results}
\label{sec-prel}
\noindent
In this section we collect some results that we use in the proofs.
\subsection{Coupling clusters on the torus and the lattice}
\label{sec-coupling}

We say that two vertices $x$ and $y$ in $\Z^d$ are {\it $r$-equivalent}
and write $x\stackrel{r}\sim y$, if $y = x + rz$ for some $z\in \Z^d$.
We say that two edges $e_1 = \{x_1,y_1\}$ and
$e_2 = \{x_2,y_2\}$ in $\mathbb E^d$ are $r$-{\it equivalent} ($e_1\stackrel{r}\sim e_2$) if
$x_1\stackrel{r}\sim x_2$ and $y_1\stackrel{r}\sim y_2$, or if
$x_1\stackrel{r}\sim y_2$ and $y_1\stackrel{r}\sim x_2$.
In the proof of Theorem~\ref{thmNCCExponent}, we need an extension
of \cite[Proposition~2.1]{HH}:

\begin{proposition}[Coupling of clusters on torus and lattice]
\label{prCoupling}
Consider bond percolation on $\Z^d$ and on $\mathbb T_r^d$ with parameter $p\in[0,1]$.
There exists a coupling ${\mathbb P}_{\sZ,\sT,p}$ of ${\mathbb P}_{\sZ,p}$ and 
${\mathbb P}_{\sT,p}$ on the joint space of percolation on $\mathbb Z^d$ and $\mathbb T_r^d$ 
such that
the following properties are satisfied ${\mathbb P}_{\sZ,\sT,p}$-almost surely for all $k$:
\begin{itemize}\itemsep1pt
\item[(a)]
for all $x\in \mathbb T_r^d$,
\[
\{0\stackrel{\leq k}\longleftrightarrow x\mbox{ in }\mathbb T_r^d\} \subseteq 
\bigcup_{y\in \Z^d,y\stackrel{r}\sim x}\{0\stackrel{\leq k}\longleftrightarrow y~\mbox{in}~\Z^d\},
\]
\item[(b)]
for $x\stackrel{r}\sim y$, the event 
\[
\{0\stackrel{\leq k}\longleftrightarrow y~\mbox{in}~ \Z^d\}\setminus    
\{0\stackrel{\leq k}\longleftrightarrow x~\mbox{in}~ \mathbb T_r^d\}    
\]
implies that there exist distinct $r$-equivalent vertices $v_1$ and $v_2$ in $\Z^d$ and a vertex $z\in \Z^d$ such that
the following disjoint connections take place in $\Z^d$:
\[
\{0\stackrel{\leq k}\longleftrightarrow z\} \circ 
\{z\stackrel{\leq k}\longleftrightarrow v_1\} \circ 
\{z\stackrel{\leq k}\longleftrightarrow v_2\} \circ 
\{v_1 \stackrel{\leq k}\longleftrightarrow y\}.
\]
\end{itemize}
\end{proposition}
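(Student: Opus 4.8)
The plan is to construct the coupling explicitly by projecting lattice percolation down to the torus, and then to analyze the difference between the two connection events via a BK-type decomposition. First I would recall the coupling from \cite[Proposition~2.1]{HH}: every edge $e$ of $\torus$ corresponds to a unique $r$-equivalence class of edges in $\mathbb E^d$, so I declare $e$ open on the torus if and only if \emph{at least one} representative in its class is open in $\Z^d$. Concretely, label the edges of $\torus$ by a fundamental domain, pick for each torus-edge the representative in that domain, and set $\omega^{\sT}_e$ to be the indicator that the chosen representative is open in $\Z^d$; this makes $\omega^{\sT}$ a genuine $\mathrm{Bernoulli}(p)$ configuration on the torus (one checks independence across torus-edges since distinct torus-edges have disjoint $r$-equivalence classes of representatives), and it couples $\PP_{\sZ,p}$ and $\PP_{\sT,p}$ on the joint space.

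With this coupling in hand, part~(a) is essentially immediate: an open path $(0 = x(0),\ldots,x(m) = x)$ of length $m \le k$ in $\torus$ lifts step by step to a path in $\Z^d$ starting at $0$ — at each step, the torus-edge traversed is open, hence some $r$-equivalent lattice edge is open, and we move along that lifted edge. The endpoint $y$ of the lifted path then satisfies $y\stackrel{r}\sim x$ and $0\stackrel{\le k}\longleftrightarrow y$ in $\Z^d$, which is exactly the claimed inclusion. (One should note the lifted path need not be self-avoiding, but $0\stackrel{\le k}\longleftrightarrow y$ only asks for a walk of length $\le k$, so this is harmless; alternatively erase loops to shorten it.)

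The substantive content is part~(b), and this is where I expect the main obstacle. Fix $x\stackrel{r}\sim y$ and suppose $0\stackrel{\le k}\longleftrightarrow y$ in $\Z^d$ but $0\stackrel{\le k}\longleftrightarrow x$ fails in $\torus$. Take a shortest open path $\gamma$ from $0$ to $y$ in $\Z^d$, of length $\le k$. Project $\gamma$ to the torus by the quotient map $\Z^d\to\torus$: each edge of $\gamma$ becomes an open torus-edge (its own class contains an open representative, namely the one used by $\gamma$), so the projected walk $\bar\gamma$ is an open walk in $\torus$ from $0$ to $\bar y = \bar x$ — a walk, of length $\le k$, not a path. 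If $\bar\gamma$ were self-avoiding we would contradict $0\stackrel{\le k}\not\longleftrightarrow x$ in $\torus$; hence $\bar\gamma$ must \emph{self-intersect}, i.e.\ there are two indices $i<j$ along $\gamma$ with vertices $w_i, w_j$ that are $r$-equivalent but \emph{not equal} in $\Z^d$. Let $z$ be the last vertex of $\gamma$ before $w_i$ that... — more cleanly: among all such self-intersections choose one, set $v_1 = w_i$, $v_2 = w_j$ (distinct, $r$-equivalent), let $z$ be the vertex $w_i$ itself or a branch point; the point is that $\gamma$ restricted to $[0,i]$, then $[i,j]$, and $[i, m]$ gives three sub-walks meeting at $w_i$, plus the tail to $y$. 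To get the \emph{disjoint} (BK) structure $\{0\conn z\}\circ\{z\conn v_1\}\circ\{z\conn v_2\}\circ\{v_1\conn y\}$ with all connections of length $\le k$, decompose $\gamma$ at the appropriate branch vertex $z$ as follows: walk along $\gamma$ from $0$; let $z$ be the first vertex from which two edge-disjoint portions of $\gamma$ emanate reaching $r$-equivalent-but-unequal points $v_1, v_2$; then $0$-to-$z$, $z$-to-$v_1$, $z$-to-$v_2$ and $v_1$-to-$y$ (the continuation of $\gamma$) use pairwise edge-disjoint edge sets of $\gamma$ (each of length at most $|\gamma|\le k$), and edge-disjoint open connections in $\Z^d$ realized on disjoint edge sets witness the BK event $\circ$. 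This is the argument of \cite[Proposition~2.1]{HH} with the length bound $k$ carried through everywhere; the only care needed is to verify that the four sub-walks can genuinely be chosen edge-disjoint — which follows because they are sub-paths of the single self-avoiding-up-to-the-torus path $\gamma$ branching at $z$ — and that each has length $\le k$, which is automatic since each is a sub-walk of $\gamma$. I would write the bound "for all $k$" uniformly by noting every step of the construction only ever \emph{shortens} walks, so the length constraint propagates without loss.
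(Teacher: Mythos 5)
There is a genuine gap, and it is in the very first step: the coupling you construct does not have the properties you later use. Your ``concrete'' coupling (torus edge open iff its chosen fundamental-domain representative is open in $\Z^d$) does have the correct marginals, but property (a) fails for it: when you lift a torus-open path step by step, the open lattice edge guaranteed by the coupling is the \emph{fundamental-domain representative} of the traversed torus edge, which in general is not incident to the current endpoint of your lifted path (this happens as soon as the torus path wraps around, i.e.\ the lifted path leaves the fundamental domain); the $r$-equivalent lattice edge you actually need is independent of the torus configuration and may well be closed. Symmetrically, your argument for (b) projects an open lattice path $\gamma$ to the torus and claims each projected edge is torus-open because ``its class contains an open representative, namely the one used by $\gamma$'' --- but under your coupling a torus edge is open only if the \emph{designated} representative is open, not if some edge of its class is; and under your alternative formulation (``open iff at least one representative in the class is open'') the torus marginal is not $\mathrm{Bernoulli}(p)$ at all, since each class contains infinitely many lattice edges. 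Either way, the dichotomy ``projected walk is self-avoiding (contradiction) or it self-intersects, producing distinct $r$-equivalent $v_1,v_2$'' is not established, so the BK decomposition never gets off the ground.

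This is precisely the difficulty the paper's proof is designed to overcome: it does \emph{not} use a static identification of torus edges with fixed lattice representatives, but builds the coupling by an exploration of ${\mathcal C}_{\sT}(0)$ that dynamically ``unwraps'' the torus cluster onto $\Z^d$, revealing each $r$-equivalence class at most once (at the lattice location where the growing unwrapped cluster needs it, chosen closest to the origin so that the length bound $k$ is preserved), declaring the other edges of that class \emph{ghost}, and filling in ghost and unexplored lattice edges from an independent configuration to recover the $\PP_{\sZ,p}$ marginal. With that construction (a) is immediate, and (b) is proved not via a self-intersection of the projected path but by locating the \emph{first ghost edge} on a shortest open lattice path from $0$ to $y$: the exploration structure supplies the second representative of its class, already attached to the explored cluster by an open path of length at most $k$, and this is what yields the distinct $r$-equivalent vertices $v_1,v_2$, the branch point $z$, and the four edge-disjoint connections. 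Your write-up is missing this entire mechanism; as stated, the coupling is either one with wrong marginals or one for which both (a) and the projection step in (b) are false.
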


\cite[Proposition~2.1]{HH} is Proposition \ref{prCoupling} for $k=\infty$. 

\begin{proof}
Let $\varphi$ be the map from $\mathbb E_r^d$ to subsets of $\mathbb E^d$ defined by  
	\[
	\varphi(\{x,y\}) = \left\{\{x',y'\}\in\mathbb E^d~:~x'\stackrel{r}\sim x,~y'\stackrel{r}\sim y\right\}
	\qquad\mbox{for all }\{x,y\}\in\mathbb E_r^d .\
	\]
The sets $(\varphi(e))_{e\in\mathbb E_r^d}$ form the equivalence classes of the equivalence
relation $e\stackrel{r}\sim f$, where $e\stackrel{r}\sim f$ denotes that the endpoitns are 
$r$-equivalent. Note that for each $e\in\mathbb E_r^d$, $\varphi(e)\neq\varnothing$, 
and the sets $(\varphi(e))_{e\in\mathbb E_r^d}$ form a partition of $\mathbb E^d$. 
Let $\Phi$ be the map from $\{0,1\}^{\mathbb E_r^d}$ to $\{0,1\}^{\mathbb E^d}$ defined by 
	\[
	\Phi(\omega)_f = 
	\omega_e\qquad\quad \mbox{for all }\omega\in\{0,1\}^{\mathbb E_r^d},
	~e\in\mathbb E_r^d,~ f\in\varphi(e) .\
	\]
Informally, we prove the proposition by defining 
for each percolation configuration on $\mathbb T_r^d$
a certain ``unwrapping'' of the set of edges with an end-vertex in $\mathcal C_{\sT}(0)$ onto $\mathbb E^d$, 
and then revealing the status of 
the remaining edges of $\mathbb E^d$ by sampling from an independent percolation configuration on $\Z^d$. 
We achieve this by constructing an exploration of the edges of $\mathcal C_{\sT}(0)$ 
for each percolation configuration on $\mathbb T_r^d$. 
\medskip 

Let $\omega\in\{0,1\}^{\mathbb E_r^d}$. For $n\geq 0$, 
we will define the following sets of edges in $\mathbb E_r^d$ recursively in $n$: 
	\[
	\begin{array}{c}
	\text{$A_{\sT}(n)$ (active edges), \hskip1cm $O_{\sT}(n)$ (occupied edges),
	\hskip1cm
	$V_{\sT}(n)$ (vacant edges), \hskip0.5cm and}\\
	\text{$E_{\sT}(n) = O_{\sT}(n)\cup V_{\sT}(n)$ (explored edges);}
	\end{array}
	\]
and the following sets of edges in $\mathbb E^d$:
	\[
	\begin{array}{c}
	\text{$A_{\sZ}(n)$ (active edges), \hskip1cm
	$O_{\sZ}(n)$ (occupied edges), \hskip1cm
	$V_{\sZ}(n)$ (vacant edges),}\\
	\text{$G_{\sZ}(n)$ (ghost edges), \hskip0.5cm and \hskip0.5cm 
	$E_{\sZ}(n) = O_{\sZ}(n)\cup V_{\sZ}(n)\cup G_{\sZ}(n)$ (explored edges). 
	}
	\end{array}
	\]
We refer to this recursive procedure as the {\it exploration}.
\medskip

We initiate the exploration by taking $A_{\sZ}(0)$ to be 
the set of all edges that are neighbors of the origin in $\Z^d$, $E_{\sZ}(0) = E_{\sT}(0) = \varnothing$, and 
$A_{\sT}(0) = \{e\in\mathbb E_r^d~:~\varphi(e)\cap A_{\sZ}(0)\neq\varnothing\}$. 
Note that $A_{\sT}(0)$ is the set of all edges in $\mathbb E_r^d$ that are neighbors of the origin in $\mathbb T_r^d$. 

Let $n\geq 0$. We assume that the exploration is defined up to step $n$ and now define it at step $(n+1)$. 
If $A_{\sZ}(n)=\varnothing$, then we stop the exploration and write $T = n-1$. 
Otherwise, we take an edge $e\in A_{\sZ}(n)$ which is {\it closest to the origin in terms of the graph distance in $O_{\sZ}(n)$} 
(with ties broken in an arbitrary deterministic fashion).
We define $G_{\sZ}(n+1)=G_{\sZ}(n)\cup\{f\colon f\neq e,f\stackrel{r}\sim e\}$. 
To make the other updates, we consider two cases:
\begin{itemize}\itemsep0pt
\item[(a)] 
if $\Phi(\omega)_e = 1$, then we define $O_{\sZ}(n+1) = O_{\sZ}(n)\cup\{e\}$, $V_{\sZ}(n+1) = V_{\sZ}(n)$, 
and $A_{\sZ}(n+1)=A_{\sZ}(n)\cup\{f\colon f\sim e\}\setminus E_{\sZ}(n+1)$, 
where $e\sim f$ means that $e$ and $f$ share an end-vertex; 
\item[(b)] 
if $\Phi(\omega)_e = 0$, then we define $O_{\sZ}(n+1) = O_{\sZ}(n)$, $V_{\sZ}(n+1)=V_{\sZ}(n)\cup\{e\}$, and 
$A_{\sZ}(n+1) = A_{\sZ}(n)\setminus E_{\sZ}(n+1)$.
\end{itemize}
Finally, for $S\in\{A,O,V,E\}$, we define 
$S_{\sT}(n+1) = \{e\in\mathbb E_r^d~:~\varphi(e)\cap S_{\sZ}(n+1)\neq\varnothing\}$.

Note that in the exploration the status of each edge $f\in\mathbb E_r^d$ (i.e., the value of $\omega_f$) is 
checked at most once. Indeed, once an edge $e\in\varphi(f)$ is selected, all the remaining edges in its equivalence class $\varphi(f)$ are immediately declared ghost, and therefore, cannot become active anymore. 
In particular, for each $S\in\{A,O,V,E\}$, if $S_{\sZ}(n+1)\setminus S_{\sZ}(n)\neq\varnothing$, then 
$S_{\sT}(n+1)\setminus S_{\sT}(n)\neq\varnothing$.

Also note that the exploration eventually stops, since $T$ is at most the number of edges in $\mathbb E_r^d$. 
Moreover, the set $O_{\sT}(T)$ coincides with the set of open edges of $\mathcal C_{\sT}(0)$ in $\omega$, and
the set $V_{\sT}(T)$ is the set of closed edges of $\mathbb E_r^d$ in $\omega$ sharing an end-vertex with $\mathcal C_{\sT}(0)$. 
In other words, the exploration stops as soon as all the edges with an end-vertex in $\mathcal C_{\sT}(0)$ are explored. 
\medskip

To complete the construction of the coupling, we define,
for each $\omega\in\{0,1\}^{\mathbb E_r^d}$ and $\overline \omega\in\{0,1\}^{\mathbb E^d}$, the configuration 
$\widetilde \omega\in\{0,1\}^{\mathbb E^d}$ such that for all $e\in\mathbb E^d$, 
	\[
	\widetilde \omega_e = 
	\left\{
	\begin{array}{l}
	\text{$\Phi(\omega)_e$\hskip1cm if $e\in O_{\sZ}(T)\cup V_{\sZ}(T)$,}\\
	\text{$\overline \omega_e$ \hskip1.5cm otherwise.}
	\end{array}
	\right.
	\]
Note that for any $p\in[0,1]$, if $(\omega,\overline \omega)$ is sampled from $\mathbb P_{\sT,p}\otimes\mathbb P_{\sZ,p}$, 
then $\widetilde \omega$ gives a sample from $\mathbb P_{\sZ,p}$. 
Therefore, $(\omega,\widetilde\omega)$ gives us a coupling of $\mathbb P_{\sT,p}$ and $\mathbb P_{\sZ,p}$.
It remains to check that this coupling satisfies the properties defined in the statement of the proposition. 
\medskip

Property (a) is immediate from the construction. 
It remains to prove property (b).
Take $x\in\mathbb T_r^d$, $y\in\Z^d$, with $x\stackrel{r}\sim y$, and an integer $k\geq 0$. 
Let $\omega\in\{0,1\}^{\mathbb E_r^d}$ and $\overline\omega\in\{0,1\}^{\mathbb E^d}$ be 
such that 
	\[
	\widetilde\omega\in\{0\stackrel{\leq k}\longleftrightarrow y\mbox{ in }\Z^d\}
	\qquad\mbox{and}\qquad
	\omega\notin\{0\stackrel{\leq k}\longleftrightarrow x\mbox{ in }\mathbb T_r^d\} .\
	\]
We need to show that there exist distinct $r$-equivalent vertices $v_1$ and $v_2$ in $\Z^d$ and a vertex $z\in \Z^d$ such that
	\[
	\widetilde \omega\in 
	\{0\stackrel{\leq k}\longleftrightarrow z\} \circ 
	\{z\stackrel{\leq k}\longleftrightarrow v_1\} \circ 
	\{z\stackrel{\leq k}\longleftrightarrow v_2\} \circ 
	\{v_1 \stackrel{\leq k}\longleftrightarrow y\}.
	\]
We fix a shortest open path $\pi$ from $0$ to $y$ in $\Z^d$ in $\widetilde \omega$. 
By assumption, the length $k'$ of $\pi$ is at most $k$. 
Since $\omega\notin\{0\stackrel{\leq k}\longleftrightarrow x\mbox{ in }\mathbb T_r^d\}$, 
there exists an edge $e$ on this path such that $e\in G_{\sZ}(T)$.
Let $e$ be the first edge on $\pi$ from $G_{\sZ}(T)$ (counting from $0$ to $y$).
We denote by $(e_1,\ldots,e_{m-1})$ all the edges on the part of $\pi$ from $0$ to $e$, and 
by $(e_{m+1},\ldots, e_{k'})$ all the edges on the part of $\pi$ from $e$ to $y$. 
Since $\pi$ is chosen to be a shortest open path from $0$ to $y$, 
there exists $n$ such that 
$e_1,\ldots,e_{m-1}\in O_{\sZ}(n)$ and $e_{m+1},\ldots,e_{k'}\notin O_{\sZ}(n)$. 
Indeed, otherwise by the definition of the exploration, 
there would exist an open path from $0$ to $y$ which is strictly shorter than $\pi$.  

By construction, there exists an edge $f\in\Z^d$ such that $f\neq e$, $f\stackrel{r}\sim e$, 
and the origin is connected to one of the end vertices of $f$ by an open path $\pi'$ inside $O_{\sZ}(n)$.
In particular, the length of $\pi'$ is at most $k$.
We denote this vertex by $v_2$, and let $v_1$ be the end-vertex of $e$ which is $r$-equivalent to $v_2$.
Let $\pi_1$ be the part of $\pi$ from $0$ to $v_1$, and $\pi_2$ be the part of $\pi$ from $v_1$ to $y$. 

Note that $\pi_1$ and $\pi_2$ are edge-disjoint by definition. 
Moreover, since the edges of $\pi'$ are all in $O_{\sZ}(n)$, 
and the edges of $\pi_2$ are not in $O_{\sZ}(n)$, 
we deduce that $\pi'$ and $\pi_2$ are also edge-disjoint. 
We have thus shown that 
	\[
	\widetilde\omega \in 
	\bigcup_{v_1\neq v_2,~v_1\stackrel{r}\sim v_2}
	\{0\stackrel{\leq k}\longleftrightarrow v_1\mbox{ in }\Z^d,~
	0\stackrel{\leq k}\longleftrightarrow v_2\mbox{ in }\Z^d\}
	\circ\{v_1\stackrel{\leq k}\longleftrightarrow y~\mbox{in}~\Z^d\} .\
	\]
We finish the proof by observing that if $0$ is connected to $v_1$ and $v_2$ by open paths in $\Z^d$ 
of length at most $k$, then
there exists $z\in \Z^d$ such that the following edge-disjoint open paths (each of length at most $k$) exist in $\Z^d$:
from $0$ to $z$,  from $z$ to $v_1$, and from $z$ to $v_2$.
\end{proof}

\medskip

From now on, we only consider the probability measure ${\mathbb P}_{\sZ,\sT,p}$ defined in Proposition~\ref{prCoupling} and
\begin{center}
{\it in the remainder of the paper, we write ${\mathbb P}_p$ for ${\mathbb P}_{\sZ,\sT,p}$.}
\end{center}
In particular, $\mathbb P_p(E) = {\mathbb P}_{\sZ,p}(E)$ for $E\in {\mathcal F}_{\sZ,p}$, and
$\mathbb P_p(E) = {\mathbb P}_{\sT,p}(E)$ for $E\in {\mathcal F}_{\sT,p}$ (see Section~\ref{sec-coupling} for notation), and
we always assume without mentioning the coupling from Proposition~\ref{prCoupling}
when we consider events from ${\mathcal F}_{\sZ,p}$ and ${\mathcal F}_{\sT,p}$ simultaneously.

\subsection{Previous results}
\label{sec-pre-res}

In the next theorem, we summarize a number of results on high-dimensional percolation
on $\Z^d$ that we will often use in the proofs in this paper. 

We start by introducing balls in the intrinsic distance. 
For $x\in \Z^d$ and $k\geq 0$, 
we write
 $B_{\sZ,k}(x)=\{y\colon x\stackrel{\leq k}{\conn} y \text{ in }\Z^d\}$ to denote all vertices at graph distance 
at most $k$ from $x$ in ${\mathcal C}_{\sZ}(x).$ Similarly, for $x\in \torus$ and $k\geq 0$, 
we write
$B_{\sT,k}(x)=\{y\colon x\stackrel{\leq k}{\conn} y \text{ in }\torus\}$ to denote all vertices at graph distance 
at most $k$ from $x$ in ${\mathcal C}_{\sT}(x).$
Further, we let 
$\partial B_{\sZ,k}(x)=B_{\sZ,k}(x)\setminus B_{\sZ,k-1}(x)$ and
$\partial B_{\sT,k}(x)=B_{\sT,k}(x)\setminus B_{\sT,k-1}(x)$ denote the 
vertices at graph distance precisely equal to $k$ from $x$.
Let $G$ be a subgraph of $\Z^d$ or $\torus$, respectively.
We define $B_{\sZ,k}^{\sss G}(x)$, $\partial B_{\sZ,k}^{\sss G}(x)$, $B_{\sT,k}^{\sss G}(x)$ and 
$\partial B_{\sT,k}^{\sss G}(x)$ in the same way as
$B_{\sZ,k}(x)$, $\partial B_{\sZ,k}(x)$, $B_{\sT,k}(x)$ and 
$\partial B_{\sT,k}(x)$, except that we are now only allowed
to use edges from $G$.

\begin{theorem}[Critical behavior of high-dimensional percolation]
\label{thmHighD}
Assume (\ref{prop2pf}). There exist $c>0$ and $C<\infty$ such that:
\begin{itemize}
\item[(i)]
For all $x\in\torus$, $y\in\Z^d$, and positive integer $k$, 
	\begin{equation}
	\label{KN-intrinsic}
	\sup_{G}{\mathbb P}_{p_c}\left(\partial B_{\sZ,k}^{\sss G}(y)\neq \varnothing\right)
	\leq C/k, \qquad
	\sup_{G}{\mathbb P}_{p_c}\left(\partial B_{\sT,k}^{\sss G}(x)\neq \varnothing\right)
	\leq C/k.
	\end{equation}
\item[(ii)] 
For all positive integers $n$, 
\begin{equation}
\label{prop1Arm}
c n^{-2}\leq {\mathbb P}_{p_c}\left(0\leftrightarrow \partial \Qn{n}~\mbox{in}~{\mathbb Z}^d\right) \leq C n^{-2},
\end{equation}
\item[(iii)]
\begin{equation}
\label{propSizeCn}
c n^2 \leq \sum_{x\in \Qn{n}}\tau_{\sZ,p_c}(x)\leq C n^2,
\end{equation}
and for any given $z\in\mathbb Z^d$ and a positive integer $r$ with $r\leq n$,
\begin{equation}
\label{propSizeCnr}
\frac{cn^2}{r^d} \leq \sum_{x\in \Qn{n},x\stackrel{r}\sim z, |x|\geq r/8}\tau_{\sZ,p_c}(x)\leq \frac{Cn^2}{r^d}.
\end{equation}
\item[(iv)]
For any $z\in\mathbb Z^d$,
\begin{equation}\label{propTriangle}
\sum_{x,y\in\mathbb Z^d}\tau_{\sZ,p_c}(0,x)\tau_{\sZ,p_c}(x,y)\tau_{\sZ,p_c}(y,z) \leq C|z|^{6-d}.
\end{equation}
\end{itemize}
\end{theorem}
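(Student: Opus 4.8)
The plan is to observe that most of the statements (i)–(iv) are either restatements of results already available in the literature under the assumption \eqref{prop2pf} (and the triangle condition \eqref{triangle-cond} it implies), or quick consequences of the two-point function bounds, and then to supply the few genuinely new ingredients, notably the restricted sum \eqref{propSizeCnr}. I will treat the items in an order that exploits the logical dependencies: first the two-point-function-level estimates (ii), (iii)--first half, (iv); then the intrinsic-distance bound (i); and finally the constrained sum \eqref{propSizeCnr}, which is where the main work lies.

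For (iv) I would simply invoke the standard ``convergence of the triangle diagram'' computation: inserting the upper bound in \eqref{prop2pf} and summing $\sum_{x,y}(1+|x|)^{2-d}(1+|x-y|)^{2-d}(1+|y-z|)^{2-d}$, one gets a convolution of three functions decaying like $|\cdot|^{2-d}$; by the usual Gagliardo--Nirenberg/Hardy--Littlewood--Sobolev type estimate (or the elementary ``bootstrap'' bound on convolutions of power laws used by Hara and Slade), this is $O(|z|^{6-d})$ once $d>6$, which is guaranteed by the range of validity of \eqref{prop2pf}. For (ii), the upper bound $\P_{p_c}(0\leftrightarrow\partial Q_n)\le C n^{-2}$ and the matching lower bound $cn^{-2}$ are precisely the one-arm exponent results established under the triangle condition (Kozma--Nachmias); similarly \eqref{propSizeCn} is the statement that the expected cluster size restricted to $Q_n$ grows like $n^2$, which follows from \eqref{prop2pf}: the upper bound is $\sum_{x\in Q_n}D_2(1+|x|)^{2-d}\asymp n^2$ by comparison with an integral, and the lower bound $cn^2$ follows the same way from the lower bound in \eqref{prop2pf}. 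The intrinsic-distance bound (i) is the Kozma--Nachmias estimate $\P_{p_c}(\partial B_k\ne\varnothing)\le C/k$; the supremum over subgraphs $G$ is harmless since restricting the allowed edges can only decrease the connection probabilities, and the torus version follows from the lattice version via the coupling in Proposition~\ref{prCoupling} (any intrinsic path of length $k$ on the torus lifts to one on $\Z^d$, so $\partial B_{\sT,k}^{G}(x)\ne\varnothing$ on the torus is contained in the union over $\Z^d$-preimages of $\{\partial B_{\sZ,k}^{\tilde G}(y)\ne\varnothing\}$, and one uses that the lift does not increase $k$ together with the union bound; alternatively, \cite[Proposition~2.1]{HH} gives this directly).

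The one step that requires real content is \eqref{propSizeCnr}: estimating $\sum_{x\in Q_n,\,x\stackrel{r}\sim z,\,|x|\ge r/8}\tau_{\sZ,p_c}(x)$ between $cn^2/r^d$ and $Cn^2/r^d$. The upper bound is straightforward from \eqref{prop2pf}: the sum is at most $D_2\sum_{x\in Q_n,\,x\stackrel{r}\sim z}(1+|x|)^{2-d}$; the points $x$ that are $r$-equivalent to $z$ form a sublattice of spacing $r$, so there are $\asymp (m/r)^d$ of them in any box of side $m$, whence $\sum_{x\in Q_n,\,x\stackrel{r}\sim z}(1+|x|)^{2-d}\asymp r^{-d}\int_{0}^{n}\rho^{2-d}\rho^{d-1}\,d\rho\asymp n^2/r^d$ (the constraint $|x|\ge r/8$ only removes finitely many lattice points near the origin and does not affect the order). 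The lower bound is the delicate direction: one cannot simply use the lower bound in \eqref{prop2pf} because the $x$ in question can be as far as $n$ from the origin, and $\tau_{\sZ,p_c}(x)\ge D_1(1+|x|)^{2-d}$ then gives $\sum D_1(1+|x|)^{2-d}\asymp n^2/r^d$ directly — so in fact this also works. Thus the main obstacle is mostly bookkeeping: verifying that the sublattice $\{x:x\stackrel{r}\sim z\}\cap Q_n$ genuinely has $\asymp (n/r)^d$ points with $|x|\ge r/8$ for all $z$ and all $1\le r\le n$ (including small $n/r$, where one must be careful that the sum is nonempty and of the claimed order — note if $r>8n$ the constraint $|x|\ge r/8$ could leave the set empty, but the hypothesis $r\le n$ rules this out, and for $r\le n$ there is always the point of $z$'s coset in $Q_n$ of smallest norm, which has norm $\le n$ and, after possibly translating, norm $\ge r/8$ for a suitable representative). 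So I would carry out (iv) via the convolution bound, cite Kozma--Nachmias for (i) and (ii), derive \eqref{propSizeCn} and the upper half of \eqref{propSizeCnr} by integral comparison using \eqref{prop2pf}, and finish the lower half of \eqref{propSizeCnr} by the same integral comparison using the lower bound in \eqref{prop2pf}, with the only care being the counting of coset points subject to $|x|\ge r/8$.
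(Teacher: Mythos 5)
Most of your proposal coincides with the paper's (deliberately short) proof: \eqref{prop1Arm} is \cite[Theorem~1]{KN}, the $\Z^d$ half of \eqref{KN-intrinsic} is \cite[Theorem~1.2(ii)]{KN(IIC)}, and \eqref{propSizeCn}, \eqref{propSizeCnr}, \eqref{propTriangle} follow from \eqref{prop2pf} by exactly the shell/convolution computations you sketch (your coset-counting bookkeeping under $r\leq n$ and $|x|\geq r/8$ is the only content there, and it is fine). The genuine gap is the \emph{torus} half of \eqref{KN-intrinsic}. Your derivation from Proposition~\ref{prCoupling} does not work: property (a) of the coupling controls the events $\{0\stackrel{\leq k}\longleftrightarrow x\}$, whereas $\partial B_{\sT,k}^{\sss G}(x)\neq\varnothing$ is an ``intrinsic radius at least $k$'' event, which is not preserved under the unwrapping. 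In the coupled configuration $\widetilde\omega$, the unwrapped torus cluster is augmented by resampled (ghost) edges that may be open, and such extra edges can act as shortcuts which strictly decrease the $\Z^d$-intrinsic radius; so ``torus radius $\geq k$'' does not imply ``$\Z^d$ radius $\geq k$'' for any lift, and no containment of the form you assert follows from Proposition~\ref{prCoupling}(a). Even granting a containment into a union over lifts, the union bound costs a factor of order $(1+k/r)^d$, which destroys the $C/k$ bound precisely in the regime the paper needs it (e.g.\ $k\asymp V^{1/3}\gg r^2$ in the proof of \eqref{eq:torus2pf}). Your fallback, that \cite[Proposition~2.1]{HH} ``gives this directly'', is also incorrect: that proposition is just the $k=\infty$ coupling. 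The paper instead cites the adaptation of the Kozma--Nachmias intrinsic one-arm argument to the torus carried out in \cite[verification of Theorem~4.1(b)]{HH1}; the bound is re-proved on the torus under the torus triangle condition, not deduced from the $\Z^d$ statement.

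A smaller, related slip: you dismiss the supremum over subgraphs $G$ on the grounds that removing edges can only decrease connection probabilities. That monotonicity is false for $\partial B_{\sZ,k}^{\sss G}(y)\neq\varnothing$ (deleting a shortcut can push a vertex out to intrinsic distance exactly $k$), which is exactly why Kozma--Nachmias state and prove their bound uniformly in subgraphs. Since the cited theorem already contains the supremum over $G$, this costs nothing on $\Z^d$, but the justification you give is not the right one --- and the same non-monotonicity is the reason the torus statement cannot be obtained by a soft comparison argument and has to be handled as in \cite{HH1}.
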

\begin{proof}
The first statement is \cite[Theorem~1.2(ii)]{KN(IIC)},
and its adaptation to the torus in \cite[Verification of Theorem 4.1(b)]{HH1}.
Statement (ii) is \cite[Theorem~1]{KN}. 
Statements (iii) and (iv) easily follow from (\ref{prop2pf}).
\end{proof}

The next theorem gives an upper bound on ${\mathbb E}_{\sT,p_c}|\mathcal C(0)|$, which is used often in the proofs:
\begin{theorem}[Expected critical cluster size on torus]
\label{thm:expCT}
There exists $C<\infty$ such that for all $r\geq 1$,
\begin{equation}
\label{eq:expCT}
{\mathbb E}_{\sT,p_c} |\mathcal C(0)| \leq CV^{1/3} .\
\end{equation}
\end{theorem}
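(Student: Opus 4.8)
The plan is to bound $\E_{\sT,p_c}|\mathcal C(0)| = \sum_{x\in\torus}\tau_{\sT,p_c}(x)$ by splitting the sum according to the intrinsic distance from $0$ to $x$, i.e. writing
\[
\E_{\sT,p_c}|\mathcal C(0)| = \sum_{k\geq 0}\E_{\sT,p_c}|\partial B_{\sT,k}(0)|,
\]
and estimating the expected size of each intrinsic sphere. The point is that by Theorem~\ref{thmHighD}(i), $\P_{p_c}(\partial B_{\sT,k}(0)\neq\varnothing)\leq C/k$, so the cluster is nonempty out to intrinsic distance at most $O(V^{1/3})$ only with small probability, while the expected volume of each sphere grows only polynomially in $k$. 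I would first establish a bound of the form $\E_{\sT,p_c}[|\partial B_{\sT,k}(0)|\,;\,\partial B_{\sT,k}(0)\neq\varnothing]\leq C k^{?}$, or more directly estimate $\E_{\sT,p_c}|B_{\sT,k}(0)|$, and then optimize the truncation level in $k$.

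The cleanest route is via the coupling of Proposition~\ref{prCoupling}(a): for each $x\in\torus$ and each $k$,
\[
\P_{p_c}(0\stackrel{\leq k}\conn x\text{ in }\torus)\leq \sum_{y\in\Z^d,\,y\stackrel{r}\sim x}\P_{p_c}(0\stackrel{\leq k}\conn y\text{ in }\Z^d),
\]
so that $\E_{\sT,p_c}|B_{\sT,k}(0)|\leq \E_{\sZ,p_c}|B_{\sZ,k}(0)| = \sum_{y\in\Z^d}\P_{p_c}(0\stackrel{\leq k}\conn y\text{ in }\Z^d)$, which by \eqref{KN-ball} (the $r=1$ case of Theorem~\ref{thm:ShortConn:r}, i.e. the Kozma--Nachmias bound) is at most $Ck$. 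Then I would write, for any threshold $K$,
\[
\E_{\sT,p_c}|\mathcal C(0)| = \sum_{k\geq 1}\E_{\sT,p_c}|\partial B_{\sT,k}(0)| \leq \E_{\sT,p_c}|B_{\sT,K}(0)| + \sum_{k> K}\E_{\sT,p_c}|\partial B_{\sT,k}(0)|,
\]
bounding the first term by $CK$ and controlling the tail via $\E_{\sT,p_c}[|\partial B_{\sT,k}(0)|] = \E_{\sT,p_c}[|\partial B_{\sT,k}(0)|\,;\,\partial B_{\sT,k}(0)\neq\varnothing]$ together with a bound on the conditional expectation of the sphere volume and the probability $\P_{p_c}(\partial B_{\sT,k}(0)\neq\varnothing)\leq C/k$ from \eqref{KN-intrinsic}. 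Balancing the two contributions at $K\asymp V^{1/3}$ should give the claimed $CV^{1/3}$.

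The main obstacle is handling the tail sum $\sum_{k>K}\E_{\sT,p_c}|\partial B_{\sT,k}(0)|$: one needs that on the event $\{\partial B_{\sT,k}(0)\neq\varnothing\}$ the sphere volume is not too large in expectation, which requires more than just $\P_{p_c}(\partial B_{\sT,k}(0)\neq\varnothing)\leq C/k$. The natural fix is a second-moment-type input: either a bound of the form $\E_{\sT,p_c}[|\partial B_{\sT,k}(0)|^2]\leq C k^2$ on $\Z^d$ (which follows from the triangle condition / \eqref{prop2pf} via the standard tree-graph estimates, cf. Kozma--Nachmias), combined with Cauchy--Schwarz to get $\E_{\sT,p_c}[|\partial B_{\sT,k}(0)|\,;\,\partial B_{\sT,k}(0)\neq\varnothing]\leq (\E|\partial B_{\sT,k}(0)|^2)^{1/2}(C/k)^{1/2}\leq C k^{1/2}$, so that $\sum_{k>K}Ck^{1/2}\cdot$(something) converges appropriately — or, more simply, using that $\E|B_{\sZ,k}(0)|\leq Ck$ already controls the full tail since $\sum_k \E|\partial B_{\sT,k}(0)|I(k\leq K) \leq CK$ and for $k>K$ one uses $\E|\partial B_{\sT,k}(0)| \leq \E[|\partial B_{\sT,k}(0)|\,;\,|B_{\sT,k}(0)|\text{ large}]$ bounded via a union over the lattice unwrapping and the exponential/polynomial decay in Theorem~\ref{thm:ShortConn:r}. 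In practice I expect the slickest argument splits $\Z^d$ connections at distance $K$ and invokes \eqref{eq:ShortConn:r} with $r$ replaced by the spatial displacement scale $r$ of the torus (so $k\geq r^2$ regime, giving the $Ck/r^d$ bound) to show that the torus two-point function differs from the $\Z^d$ one by at most $O(V^{-2/3})$ per vertex beyond the ball $\Qn{r/8}$, and then $\sum_{x\in\torus}\tau_{\sZ,p_c}(x)\leq Cr^2$ by \eqref{propSizeCn} combined with $r^d\cdot V^{-2/3} = V^{1/3}$ finishes it; making this clean is the crux of the proof.
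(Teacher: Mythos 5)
Your primary route --- writing $\E_{\sT,p_c}|\mathcal C_{\sT}(0)|=\sum_{k\geq 0}\E_{\sT,p_c}|\partial B_{\sT,k}(0)|$, bounding $\E_{\sT,p_c}|B_{\sT,K}(0)|\leq CK$ via Proposition~\ref{prCoupling}(a) and \eqref{KN-ball}, and then truncating --- has a genuine gap exactly where you flag ``the main obstacle''. Even granting the second-moment input, Cauchy--Schwarz gives only $\E_{\sT,p_c}|\partial B_{\sT,k}(0)|\leq Ck^{1/2}$, and $\sum_{k>K}Ck^{1/2}$ does not converge: the intrinsic radius on the torus can be of order $V$, so this yields $O(V^{3/2})$, which is useless. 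Indeed $\sum_k\E_{\sT,p_c}|\partial B_{\sT,k}(0)|$ is precisely the quantity you are trying to bound, and for $k\ll V^{1/3}$ each summand is of order one, so no per-sphere estimate that fails to \emph{decay} in $k$ can close the argument; the vaguer alternative (``a union over the lattice unwrapping'') is not an argument. The mechanism that actually controls the tail is per vertex, not per sphere: if $0\conn x$ in $\torus$ but not within intrinsic distance $3k$, with $k=\lfloor V^{1/3}\rfloor$, then $\partial B_{\sT,k}(0)\neq\varnothing$ and $\partial B_{\sT,k}(x)\neq\varnothing$ occur on disjoint edge sets, and \eqref{KN-intrinsic} bounds this by $(C/k)^2\leq CV^{-2/3}$, which summed over $x\in\torus$ gives $CV^{1/3}$; this is exactly how the paper proves \eqref{eq:torus2pf}.

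Your closing sketch is, in substance, the paper's (alternative) proof, and the step you call ``the crux'' is already available: the comparison $\tau_{\sT,p_c}(x)\leq\tau_{\sZ,p_c}(x)+CV^{-2/3}$ is Theorem~\ref{thmTwoPointTorus}, whose proof in Section~\ref{sec:thmTwoPointTorus} uses only Proposition~\ref{prCoupling}, \eqref{eq:ShortConn:r} and \eqref{KN-intrinsic}, not the present theorem, so invoking it here is not circular. Given that, the proof is two lines: $\E_{\sT,p_c}|\mathcal C_{\sT}(0)|=\sum_{x\in\torus}\tau_{\sT,p_c}(x)\leq\sum_{x\in\torus}\tau_{\sZ,p_c}(x)+CV\cdot V^{-2/3}\leq Cr^2+CV^{1/3}\leq C'V^{1/3}$, using \eqref{propSizeCn} with $n=r$ and $r^2=V^{2/d}\leq V^{1/3}$ for $d\geq 6$. (The paper's first-choice proof simply quotes (1.6) of \cite{HH} and Theorem~1.6(iii) of \cite{BCHSS05b}.) So the route you gesture at in your last paragraph is correct and is the intended one, but as written your proposal leaves its key input unestablished, while the route you actually develop (intrinsic spheres plus Cauchy--Schwarz) fails.
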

\begin{proof}
The statement follows from \cite[(1.6)]{HH} and \cite[Theorem 1.6(iii)]{BCHSS05b}.
(Alternatively, it follows from \eqref{eq:torus2pf} and \eqref{propSizeCn}.)
\end{proof}

\medskip

\section{Proof of Theorem~\ref{thmNCCExponent}}
\label{sec-pfthmNCCExponent}

In this section, we prove Theorem~\ref{thmNCCExponent} subject to Theorems \ref{thm:ConnInsideBalls}--\ref{thmTwoPointTorus}.
We give the proof of the upper bound in Proposition~\ref{prNCCExponent1}
and the proof of the lower bound in Proposition~\ref{prNCCExponent2}.
The results of these propositions are more general than the result of
Theorem~\ref{thmNCCExponent}, and also give bounds on the probability
that the origin is in a long cycle with small length.
In particular, the upper bound on the probability of such an event will be
used in the proof of Theorem~\ref{thmNCCExistence} to
show that large open clusters do not contain long cycles with few edges.

Recall that $\LC{k}$ denotes the event that the origin is in an open long
cycle of length at most $k$. We prove the following bounds:

\begin{proposition}[Upper bound on long cycles]
\label{prNCCExponent1}
Assume (\ref{prop2pf}).
There exists $C<\infty$ such that
    \[
    {\mathbb P}_{p_c}(0~\mbox{is in an open long cycle}) \leq CV^{-2/3},
    \]
and
    \[
    {\mathbb P}_{p_c}(\LC{k})
    \leq Ck/V.
    \]
\end{proposition}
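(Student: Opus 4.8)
The plan is to reduce the event that $0$ lies in an open long cycle of length at most $k$ to a configuration of open paths whose lengths are controlled, and then bound the probability of such configurations using the estimates of Theorems~\ref{thm:ConnInsideBalls}--\ref{thmTwoPointTorus} together with the coupling of Proposition~\ref{prCoupling}. First I would set up the basic geometric decomposition: if $0$ is in a long cycle $\pi$ of length $m \le k$, then $\pi$ contains a vertex in $\partial\Qn{r/4}(0)$, so we may split $\pi$ at $0$ and at the first vertex $w$ on $\pi$ lying in $\partial\Qn{r/4}(0)$ (in some fixed orientation) into two edge-disjoint open arms, one from $0$ to $w$ and the other from $w$ back to $0$, each of length at most $k$. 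Passing through the coupling $\P_p = \P_{\sZ,\sT,p}$ and applying Proposition~\ref{prCoupling}(a) to each arm, the existence of such a cycle on the torus implies that in $\Z^d$ there exist $r$-equivalent lifts and a vertex $w'$ with $|w'|\ge\lfloor r/4\rfloor$ such that $0$ is joined to $w'$ and back to (a lift of) $0$ by two open paths in $\Z^d$, each of length at most $k$. The BK-type inequality then gives a bound of the form $\sum_{w'} \P_{\sZ,p_c}(0 \stackrel{\le k}\conn w')^2$ summed over $w'$ with $|w'|\ge\lfloor r/4\rfloor$ lying over a fixed torus vertex, plus lower-order correction terms coming from part (b) of the coupling.

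The second step is to estimate these sums. The key point is to balance two regimes in the length constraint $k$. When $k \ge r^2$, the cycle is ``long enough'' that each arm behaves essentially diffusively and we can use \eqref{eq:ShortConn:r}: summing the square of the short connection probability over the relevant $w'$ yields a bound of order $(k/r^d)^2 \cdot r^d$-type contributions, but more carefully one wants the first statement of \eqref{eq:ShortConn:r} combined with the plain two-point bound $\tau_{\sZ,p_c}(w') \le D_2 |w'|^{2-d}$ from \eqref{prop2pf} for one of the two arms, giving a sum over $w'$ with $|w'| \ge \lfloor r/4\rfloor$ of $\tau_{\sZ,p_c}(w')\cdot \P_{\sZ,p_c}(0\stackrel{\le k}\conn w')$, which by \eqref{eq:ShortConn:r} is bounded by $C k / r^d = Ck/V$. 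For the regime $k \le r^2$ we instead use the exponential bound $Cr^{2-d}\e^{-r/(C\sqrt{k})}$ in \eqref{eq:ShortConn:r}, which for $k \le r^2$ is much smaller than $k/V$ (since $r^{2-d}\e^{-r/(C\sqrt k)} = V^{-1}\cdot r^{2}\e^{-r/(C\sqrt k)}$ and $r^2 \e^{-r/(C\sqrt k)} \le k$ up to constants once $k\le r^2$); hence the bound $Ck/V$ holds uniformly in $k$. Summing $Ck/V$ over $k\le V$, or rather taking $k=V$ (equivalently removing the length constraint), gives the first displayed bound $\P_{p_c}(0\text{ in an open long cycle}) \le C V^{-2/3} = C V/V^{... }$; more precisely, an unconstrained long cycle through $0$ forces a connection to distance $\ge r/4$ in $\Z^d$ and back, and $\sum_{|w'|\ge \lfloor r/4\rfloor,\, w'\stackrel{r}\sim 0} \tau_{\sZ,p_c}(w')^2 \le C r^{-2d}\cdot r^d\cdot(r/4)^{2}$-style estimates, combined with the torus two-point correction $CV^{-2/3}$ of \eqref{eq:torus2pf}, yield $CV^{-2/3}$.

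The third step is to handle the error terms from Proposition~\ref{prCoupling}(b): these involve a fourfold disjoint connection $\{0\stackrel{\le k}\conn z\}\circ\{z\stackrel{\le k}\conn v_1\}\circ\{z\stackrel{\le k}\conn v_2\}\circ\{v_1\stackrel{\le k}\conn y\}$ with $v_1 \stackrel{r}\sim v_2$ distinct. After applying BK and summing over all the free vertices, one such sum is controlled by \eqref{eq:ShortConn:3r} (the triangle-type bound $Ck^3/r^d$ when $k\ge r^2$), and the remaining factor, a short connection from $v_1$ to $y$ around distance $\ge r/4$, contributes another factor that makes the whole correction of smaller order than $Ck/V$ in the relevant range; the $k\le r^2$ case is again dominated by the exponential factor. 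The main obstacle I anticipate is precisely this bookkeeping of the coupling error terms: one must verify that the fourfold-connection correction, after all summations, is genuinely $o(k/V)$ (or at worst $O(k/V)$) uniformly in both $k$ and $r$, and this requires carefully pairing the $Ck^3/r^d$ bound from \eqref{eq:ShortConn:3r} against a short-connection factor that decays in $r$ — using that $v_1$ must be at distance of order $r$ from its $r$-translate, so the last factor is at most $\tau_{\sZ,p_c}$ evaluated at scale $r$, i.e.\ $O(r^{2-d})$, which kills the excess powers of $k$. Once the error analysis is in place, assembling the two displayed bounds is routine: the unconstrained bound follows by taking $k$ as large as needed (or directly from the $\Z^d$ sum plus \eqref{eq:torus2pf}), and the constrained bound $Ck/V$ follows from the case analysis $k\gtrless r^2$ above.
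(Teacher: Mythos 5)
There is a genuine gap in your main estimate: you never exploit the fact that the piece of the cycle from $0$ up to its \emph{first} visit to $\partial\Qn{r/4}(0)$ stays inside the box $\Qn{r/4}$, and this is exactly the ingredient that makes the bookkeeping close. The paper's proof writes the event as the disjoint occurrence $\{0\leftrightarrow x\mbox{ in }\Qn{r/4}\}\circ\{0\stackrel{\leq k}\longleftrightarrow x\mbox{ in }\torus\}$ for some $x\in\partial\Qn{r/4}$, applies BK, bounds the second factor by $Ck/V$ (coupling part (a) plus \eqref{eq:ShortConn:r}), respectively by $CV^{-2/3}$ (via \eqref{eq:torus2pf} and $d>6$), and then --- crucially --- uses Theorem~\ref{thm:ConnInsideBalls}(a) to get $\sum_{x\in\partial\Qn{r/4}}{\mathbb P}_{p_c}(0\leftrightarrow x\mbox{ in }\Qn{r/4})\leq C$. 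In your plan you replace the confined arm by the unrestricted two-point function $\tau_{\sZ,p_c}(w')\leq D_2|w'|^{2-d}$; but after summing over the $\asymp r^{d-1}$ possible boundary vertices (a sum you cannot avoid, since the hitting point of $\partial\Qn{r/4}(0)$ is not fixed), you have $\sum_{x\in\partial\Qn{r/4}}\tau_{\sZ,p_c}(x)\asymp r$ rather than $O(1)$, so your estimates only yield $Ckr/V$ and $CrV^{-2/3}$, each off by a factor of $r$. The variant in which both arms carry the length restriction fares no better: $\sum_{x\in\partial\Qn{r/4}}{\mathbb P}_{\sT,p_c}(0\stackrel{\leq k}\longleftrightarrow x\mbox{ in }\torus)^2$ is only of order $k^2/V$ (or $k^2/r^{d+1}$), which is insufficient for $k$ up to order $V^{1/3}$. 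The missing idea is thus precisely Theorem~\ref{thm:ConnInsideBalls}(a) applied to the arm that stays inside $\Qn{r/4}$.

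Two further points. First, your sketch of the unconstrained bound sums over lifts $w'\stackrel{r}\sim 0$ of the origin, i.e., it implicitly treats the long cycle as non-contractible; a long cycle need not wind around the torus, so that event implication is false, and the correct reduction is again the one above, with the second factor bounded by $\tau_{\sZ,p_c}(x)+CV^{-2/3}\leq CV^{-2/3}$ from \eqref{eq:torus2pf} for $x\in\partial\Qn{r/4}$ and $d>6$. Second, the ``correction terms from Proposition~\ref{prCoupling}(b)'' that occupy your third step do not arise in this upper bound at all: apply BK on the torus first and then the containment of part (a) to each factor separately; part (b) is only needed for the matching lower bound (Proposition~\ref{prNCCExponent2}).
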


\begin{proof}
We begin by proving the second statement of the proposition. Let $k\geq 1$. 
By the definition of a long cycle, if $\LC{k}$ occurs, then 
there exists $x\in\partial \Qn{r/4}$ such that 
the following connections occur disjointly:
	\[
	\{0\leftrightarrow x\mbox{ in }\Qn{r/4}\}\circ
	\{0\stackrel{\leq k}\longleftrightarrow x\mbox{ in }\mathbb T_r^d\} .\
	\]
By the BK inequality,
	\[
	\mathbb P_{p_c}(\LC{k})
	\leq 
	\sum_{x\in\partial \Qn{r/4}}
	\mathbb P_{p_c}\left(0\leftrightarrow x\mbox{ in }\Qn{r/4}\right)
	\mathbb P_{p_c}\left(0\stackrel{\leq k}\longleftrightarrow x\mbox{ in }\mathbb T_r^d\right) .\
	\]
By property (a) of Proposition~\ref{prCoupling} and \eqref{eq:ShortConn:r}, we obtain 
that for any $x\in\partial \Qn{r/4}$, 
	\[
	\mathbb P_{p_c}\left(0\stackrel{\leq k}\longleftrightarrow x\mbox{ in }\mathbb T_r^d\right)
	\leq 
	\sum_{y\in\Z^d, y\stackrel{r}\sim x}
	\mathbb P_{p_c}\left(0\stackrel{\leq k}\longleftrightarrow y\mbox{ in }\Z^d\right)
	\leq 
	C_1 k/V.\
	\]
Combining the above inequalities and using Theorem~\ref{thm:ConnInsideBalls}(a), we arrive at 
the second statement of the proposition. 

\medskip

We proceed with the proof of the first statement. 
As in the proof of the second statement,
	\[
	\mathbb P_{p_c}(0~\mbox{is in an open long cycle})
	\leq 
	\sum_{x\in\partial \Qn{r/4}}
	\mathbb P_{p_c}\left(0\leftrightarrow x\mbox{ in }\Qn{r/4}\right)
	\mathbb P_{p_c}\left(0\leftrightarrow x\mbox{ in }\mathbb T_r^d\right) .\
	\]
By \eqref{eq:torus2pf} in Theorem \ref{thmTwoPointTorus} and \eqref{prop2pf}, for any $x\in\partial\Qn{r/4}$, 
	\[
	\mathbb P_{p_c}\left(0\leftrightarrow x\mbox{ in }\mathbb T_r^d\right) 
	\leq C_2 r^{2-d} + C_2 V^{-2/3}
	\leq 2C_2 V^{-2/3} ,\
	\]
where the last inequality holds for $d>6$. 
Putting the bounds together and using Theorem~\ref{thm:ConnInsideBalls}(a), we obtain the first statement of the proposition. 
\end{proof}

\medskip

\begin{proposition}[Lower-bound on long cycles]
\label{prNCCExponent2}
Assume (\ref{prop2pf}).
There exist constants $c,\varepsilon>0$ and $K<\infty$ such that 
	\[
	{\mathbb P}_{p_c}(0~\mbox{is in an open long cycle}) \geq cV^{-2/3} ,\
	\]
and for any $k\in[Kr^2,\varepsilon V^{1/3}]$,
	\[
	{\mathbb P}_{p_c}(\LC{k}) \geq ck/V .\
	\]
\end{proposition}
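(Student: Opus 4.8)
The idea is to construct, by hand, a configuration that forces the origin onto an open long cycle, and to estimate its probability from below using the FKG inequality together with the sharp two-point estimates in \eqref{prop2pf} and the ``connections inside balls'' lower bounds hidden in Theorem~\ref{thmHighD}. For the first (unconstrained) statement, fix a site $x\in\partial\Qn{r/4}$. If the origin is connected to $x$ inside $\Qn{r/4}$ by a path $\pi_1$, \emph{and} simultaneously $0$ is connected to $x$ in $\torus$ by a path $\pi_2$ that leaves $\Qn{r/4}$ --- say, goes ``the long way around'' --- then $\pi_1\cup\pi_2$ contains an open cycle through $0$; one has to check this cycle can be taken long, which follows because every vertex of $\pi_1\cup\pi_2$ is at $|\cdot|$-distance at most $r/4$ from $0$ or from $x$, hence sees the boundary $\partial\Qn{r/4}$ of its own $r/4$-cube via the other arm (here one uses $|x|=\lfloor r/4\rfloor$ and that the arms reach both $0$ and a point at distance $\asymp r$). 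By FKG these two (increasing) events are positively correlated, so the probability is at least the product $\prob_{p_c}(0\leftrightarrow x\text{ in }\Qn{r/4})\cdot\prob_{p_c}(0\leftrightarrow x\text{ ``the long way'' in }\torus)$. Summing over $x\in\partial\Qn{r/4}$, the first factor summed is $\asymp 1$ by the matching lower bound to Theorem~\ref{thm:ConnInsideBalls}(a) (which follows from \eqref{prop2pf} by a standard second-moment argument on $\sum_{x\in\Qn{r/8}}$, cf.\ \eqref{propSizeCnr}), and for a positive fraction of such $x$ the second factor is $\gtrsim V^{-2/3}$: indeed $\tau_{\sZ,p_c}(y)\asymp r^{2-d}$ for $y$ at distance $\asymp r$, summing over the $r$-equivalence class of $x$ away from the origin gives $\asymp r^{2-d}\cdot (r^{?})$ --- more carefully, one restricts to the single ``antipodal'' representative and uses the torus lower bound $\tau_{\sT,p_c}(x)\gtrsim V^{-2/3}$ which holds for all $x$ by the random-graph asymptotics (this can be extracted from $\expec_{\sT,p_c}|\mathcal C(0)|\asymp V^{1/3}$ together with $|\mathcal C(0)|\le V$ and a reverse-Markov/second-moment argument). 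A subtlety is ensuring $\pi_2$ genuinely leaves $\Qn{r/4}$; one arranges this by requiring the connection $0\leftrightarrow x$ to use a vertex in $\partial\Qn{3r/8}$, which costs only a constant factor by \eqref{eq:torus2pf:n}-type reasoning in reverse, or more simply by noting $x\in\partial\Qn{r/4}$ together with a connection that is not confined to $\Qn{r/4}$, which has probability $\asymp$ the full $\tau_{\sT,p_c}(x)$ since confined connections contribute only $O(V^{-2/3})$ anyway.

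For the length-constrained statement, with $k\in[Kr^2,\varepsilon V^{1/3}]$, I replace the second arm by a \emph{short} connection. The construction: pick $x\in\partial\Qn{r/4}$, demand (A) $0\leftrightarrow x$ inside $\Qn{r/4}$, and (B) in $\torus$, $0$ is connected to $x$ by an open path of length at most $k$ that winds once around the torus in one coordinate direction (so it is non-contractible, hence automatically long and disjoint in homotopy type from $\pi_1$, which lies in a cube). Again FKG gives a product lower bound. The first factor summed over $x\in\partial\Qn{r/4}$ is $\asymp 1$ as above. For the second factor I need a matching \emph{lower} bound to the short-connection estimate \eqref{eq:ShortConn:r}: for $k\ge r^2$, $\sum_{y\in\Z^d,\ y\stackrel{r}\sim x,\ |y|\ge \lfloor r/4\rfloor}\prob_{p_c}(0\stackrel{\le k}{\longleftrightarrow} y\text{ in }\Z^d)\gtrsim k/V$, at least if one further restricts to $|y|\asymp r$ (one representative ``once around''). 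This lower bound should come from the same toolbox: by \eqref{propSizeCnr} with $n\asymp\sqrt k\gtrsim r$ one has $\sum_{y\in\Qn{\sqrt k},\ y\stackrel{r}\sim x,\ |y|\ge r/8}\tau_{\sZ,p_c}(y)\asymp k/r^d=k/V$, and a standard second-moment argument (using the triangle bound \eqref{propTriangle} and the intrinsic one-arm bound \eqref{KN-intrinsic} to control the cluster's intrinsic radius so that a positive fraction of these connections use $\le C k$ edges, then rescaling $\varepsilon$) upgrades $\tau$ to $\prob(0\stackrel{\le k}{\longleftrightarrow}\cdot)$ at the cost of a constant. The condition $k\ge Kr^2$ is exactly what makes $\sqrt k\gtrsim r$ so that ``once around the torus in $\le k$ steps'' is feasible and the box $\Qn{\sqrt k}$ genuinely wraps; the condition $k\le\varepsilon V^{1/3}$ keeps $\sqrt k\ll r^{d/2}$, i.e., inside the regime where $\Z^d$ and torus estimates agree via Proposition~\ref{prCoupling}(a) with negligible wrap-around corrections.

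The \textbf{main obstacle} I anticipate is the lower bound on the short-range connection probability $\prob_{p_c}(0\stackrel{\le k}{\longleftrightarrow}x\text{ in }\torus)$ with the \emph{winding} (non-contractible) constraint: the matching lower bound to \eqref{eq:ShortConn:r} is not stated as an available result, so it must be proven here, and one must be careful that (i) the path actually winds rather than doubling back, and (ii) after conditioning on large intrinsic-distance connections to control the $\le k$ constraint, the events (A) and (B) remain positively correlated --- FKG applies cleanly only because both are increasing in the configuration, so I will phrase everything in terms of increasing events and never condition, instead intersecting with the increasing event $\{$intrinsic distance from $0$ to $x$ is $\le Ck\}$'s complement being rare, i.e., using $\prob(0\stackrel{\le Ck}{\longleftrightarrow}x)\ge\tau(x)-\prob(0\leftrightarrow x,\ d_{\mathrm{intr}}(0,x)>Ck)$ and bounding the subtracted term by the intrinsic one-arm estimate \eqref{KN-intrinsic}. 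A secondary technical point is the disjointness/longness of the resulting cycle: $\pi_1\subset\Qn{r/4}$ and $\pi_2$ non-contractible are automatically edge-disjoint in the relevant sense, and longness of the combined cycle follows since it contains a non-contractible sub-loop, so every vertex on it sits within graph-distance $\le k/2\asymp$ of a point that is $|\cdot|$-distance $\ge r/4$ away --- though one must double-check the precise ``every vertex sees $\partial\Qn{r/4}$ of its own cube'' condition, which is where a short geometric lemma (a non-contractible cycle has diameter $\ge r$ in each winding coordinate, hence is long) is needed.
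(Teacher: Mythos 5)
There is a genuine gap at the heart of your construction: the FKG step does not produce what you need. The event $\{0\leftrightarrow x\mbox{ in }\Qn{r/4}\}\cap\{0\leftrightarrow x\mbox{ in }\torus\mbox{ ``the long way''}\}$ does \emph{not} imply that the origin is in an open long cycle. The two connections may be realized by paths sharing edges near $0$ (then no cycle through $0$ need exist at all), and even if they happen to be edge-disjoint, the union of two edge-disjoint $0$--$x$ paths decomposes into edge-disjoint cycles of which the one containing $0$ can be a short local loop, with the winding part forming a separate cycle avoiding $0$. What you actually need is a \emph{disjoint-occurrence} structure at the origin together with control of the local geometry there, and such probabilities cannot be lower-bounded by a product via FKG (BK goes in the opposite direction). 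This is precisely why the paper's proof does not use FKG: it runs a second-moment argument on $\Z^d$ for the event that $0$ is connected within $k$ steps to a point $u(x)$ near an $r$-equivalent copy $x$ of $0$ (Step 1), transfers this to a torus path from $0$ to $u(0)$ visiting $\partial\Qn{r/2}$ using part (b) of the coupling Proposition~\ref{prCoupling} together with \eqref{eq:ShortConn:r}, \eqref{eq:ShortConn:3r} and \eqref{propTriangle} (Step 2), and then uses a \emph{local modification} (finite-energy) argument inside the fixed box $\Qn{R}$, rewiring all bonds there so that $0$ has two disjoint arms to the endpoints $z,z'$ of the outside path; only this last step, at a constant cost $C(R)$, forces the cycle through $0$ to be long. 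Your plan contains no substitute for this step.

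A secondary but still substantive issue is the lower bound on short wrap-around connections, which you correctly identify as the main estimate but whose proof sketch is too thin. The upgrade from $\tau_{\sZ,p_c}(y)$ to ${\mathbb P}_{p_c}(0\stackrel{\leq k}\longleftrightarrow y)\geq\tfrac12\tau_{\sZ,p_c}(y)$ for $|y|\leq\sqrt{k/K}$ does not follow directly from the intrinsic one-arm bound \eqref{KN-intrinsic}, because $\{0\leftrightarrow y\}$ and the event that the intrinsic distance exceeds $k$ are correlated; the paper invokes the argument of \cite[Theorem~1.3(i)]{KN(IIC)} for exactly this point (its \eqref{eq:2pf:k}), and this is where the constraint $k\geq Kr^2$ with $K$ large enters. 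Moreover, transferring the resulting $\Z^d$ wrap-around connection to a torus path that genuinely leaves a large box cannot be done with Proposition~\ref{prCoupling}(a), which only bounds torus connection events from above by $\Z^d$ events; you need part (b) of the coupling and the quantitative estimate that the resulting four-fold disjoint connection pattern has probability at most half of the second-moment lower bound, which is the content of the paper's Step 2 and requires the choice of $\varepsilon$ small and $R$ large.
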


\begin{proof}
The first statement immediately follows from the second one for $k=\varepsilon V^{1/3}$.
We now prove the second statement.
Let $R$ be a large positive integer. Let $K$ be a large positive number, and $\varepsilon$ a small positive number.  
The precise choice of these numbers will be made later in the proof.
Let $k$ be an integer with $k\in[Kr^2,\varepsilon V^{1/3}]$. 

First of all, note that it suffices to prove the result for $r > 16R$.
Indeed, once we fix $R$, the result for $r\in [1,16R]$ will follow by adjusting the constant $c$.
Therefore, throughout the proof we assume that $r>16R$.
The proof consists of several steps.
\\
{\it Step~1.} For any $x\in {\mathbb Z}^d$, let $u(x)$ be the vertex in ${\mathbb Z}^d$ 
with coordinates $(x_1,\ldots,x_{d-1},x_d+R)$.
In particular, $u(x)\in\partial \Qn{R}(x)$.
For $x\in{\mathbb Z}^d$, consider the event
    \[
    A_x = A_x(k,R) = \{0\stackrel{\leq k}\longleftrightarrow u(x)\}.
    \]
Let
    \[
    N(A) = \left|\{x\in{\mathbb Z}^d\colon x\stackrel{r}\sim 0,~x\neq 0,~A_x~\mbox{occurs}\}\right|.
    \]
We use the second moment method to show that
    \[
    {\mathbb P}_{p_c}(N(A)\neq 0) \geq ck/V,
    \]
for some constant $c$ that may depend on $K$, but not on $r$, $k$ or $R$.

We first show that there exists a constant $C_3=C_3(K)$ such that
    \begin{equation}\label{eq:ENAlowerbound}
    {\mathbb E}_{p_c}N(A) \geq  C_3 k/V.
    \end{equation}
We write 
	\begin{equation}\label{eq:ENAlowerbound:1}
	{\mathbb E}_{p_c}N(A)
	\geq
	\sum_{x\stackrel{r}\sim 0,~ r/16\leq |x|\leq \sqrt{k/K}}
	{\mathbb P}_{p_c}(0\stackrel{\leq 	k}\longleftrightarrow u(x)) .\
	\end{equation}
Similarly to the proof of \cite[Theorem~1.3(i)]{KN(IIC)}, one can show that if $K = K(d)$ is chosen large enough, then 
for any $r/16\leq |x|\leq \sqrt{k/K}$, 
\begin{equation}\label{eq:2pf:k}
{\mathbb P}_{p_c}(0\stackrel{\leq k}\longleftrightarrow u(x)) \geq \frac 12 
{\mathbb P}_{p_c}(0\leftrightarrow u(x)) .\
\end{equation}
Inequality \eqref{eq:ENAlowerbound} now follows from \eqref{eq:ENAlowerbound:1}, \eqref{eq:2pf:k}, 
the assumptions $r>16R$ and $k\geq Kr^2$, 
and the lower bound in \eqref{propSizeCnr}, where $C_3=C_3(K)=c_3/K$ for some 
$c_3>0$ independent of all other parameters. 
From this moment onwards, the large integer $K$ remains unchanged. 
\medskip

Next, we bound the second moment of $N(A)$. Let ${\sum}'$ be the sum over all
distinct $x,y\in \Z^d$ such that $x,y\neq 0$ and $x,y\stackrel{r}\sim 0$.
We obtain
    \[
    {\mathbb E}_{p_c}N(A)^2
    \leq {\mathbb E}_{p_c}N(A) + 
{\sum}'{\mathbb P}_{p_c}\left(0\stackrel{\leq k}\longleftrightarrow u(x),~ 0\stackrel{\leq k}\longleftrightarrow u(y)\right).
    \]
Note that if $0\stackrel{\leq k}\longleftrightarrow u(x)$ and 
$0\stackrel{\leq k}\longleftrightarrow u(y)$, then there exists $z\in \Z^d$ such that the following
open paths occur disjointly: 
$0\stackrel{\leq k}\longleftrightarrow z$, $z\stackrel{\leq k}\longleftrightarrow u(x)$ and $z\stackrel{\leq k}\longleftrightarrow u(y)$.
Therefore, the BK inequality implies
    \[
    {\mathbb E}_{p_c}N(A)^2
    \leq {\mathbb E}_{p_c}N(A) +
    {\sum}'
    \sum_{z\in \Z^d} 
{\mathbb P}_{p_c}\left(0\stackrel{\leq k}\longleftrightarrow z\right)
{\mathbb P}_{p_c}\left(z\stackrel{\leq k}\longleftrightarrow u(x)\right)
{\mathbb P}_{p_c}\left(z\stackrel{\leq k}\longleftrightarrow u(y)\right) .\
    \]
Let ${\sum}''$ be the sum over all pairwise distinct $x,y,z\in\Z^d$ such that
$x\stackrel{r}\sim y\stackrel{r}\sim z$.
By translation invariance and the fact that $u(x) - z = u(x-z)$, we have 
    \[
    {\mathbb E}_{p_c}N(A)^2
    \leq {\mathbb E}_{p_c}N(A) + 
{\sum}''
{\mathbb P}_{p_c}\left(0\stackrel{\leq k}\longleftrightarrow z\right)
{\mathbb P}_{p_c}\left(0\stackrel{\leq k}\longleftrightarrow u(x)\right)
{\mathbb P}_{p_c}\left(0\stackrel{\leq k}\longleftrightarrow u(y)\right) .\
    \]
Since $x$, $y$ and $z$ are distinct and $r$-equivalent, at least two of them are at distance at least $r/2$ from the origin.
Therefore, the above sum is at most
    \begin{multline*}
    {\mathbb E}_{p_c}N(A) +
    \sum_{x\stackrel{r}\sim y\stackrel{r}\sim z;~ |x|,|y|\geq r/2}
    \left[
{\mathbb P}_{p_c}\left(0\stackrel{\leq k}\longleftrightarrow z\right)
{\mathbb P}_{p_c}\left(0\stackrel{\leq k}\longleftrightarrow u(x)\right)
{\mathbb P}_{p_c}\left(0\stackrel{\leq k}\longleftrightarrow u(y)\right) 
\right. \\
\left.
+ 
2~
{\mathbb P}_{p_c}\left(0\stackrel{\leq k}\longleftrightarrow x\right)
{\mathbb P}_{p_c}\left(0\stackrel{\leq k}\longleftrightarrow u(y)\right)
{\mathbb P}_{p_c}\left(0\stackrel{\leq k}\longleftrightarrow u(z)\right) 
    \right] .\
    \end{multline*}
Remember that we assume that $r>16R$. In particular, $|u(x)| \geq r/4$ when $|x|\geq r/2$.
Applying \eqref{eq:ShortConn:r} consequently to the sums over $x$, $y$, and then \eqref{KN-ball} to the sum over $z$, 
and then using the assumption $k\leq \varepsilon V^{1/3}$, we obtain that 
there exists $\varepsilon_0 = \varepsilon_0(d)>0$ such that for any $\varepsilon<\varepsilon_0$, 
\begin{equation}\label{eq:NA2auxbound}
{\mathbb E}_{p_c}N(A)^2
\leq 
{\mathbb E}_{p_c}N(A) +
C_4 k\cdot\left(\frac kV\right)^2
\leq {\mathbb E}_{p_c}N(A) +C_4 \varepsilon^2V^{-1/3}\cdot \frac kV 
\leq 2{\mathbb E}_{p_c}N(A) .\ 
\end{equation}
A second moment estimate, using \eqref{eq:ENAlowerbound} and \eqref{eq:NA2auxbound}, yields
    \begin{equation}\label{eq:NAsecondmoment}
    {\mathbb P}_{p_c}(N(A)\neq 0) \geq \frac{({\mathbb E}_{p_c}N(A))^2}{{\mathbb E}_{p_c}N(A)^2} \geq \frac{C_3 k}{2V} .\
    \end{equation}
\medskip

\noindent
{\it Step~2.} Consider the event
    \[
E = E(r,k,R) = 
\{0\stackrel{\leq k}\longleftrightarrow u(0)~\mbox{in}~{\mathbb T}_r^d~
\mbox{by an open path which visits }\partial \Qn{r/2}\}.
    \]
We show that for small enough $\varepsilon$ and large enough $R$,
    \[
    {\mathbb P}_{p_c}(E) \geq \frac 12 {\mathbb P}_{p_c}(N(A)\neq 0).
    \]
Since ${\mathbb P}_{p_c}(E) \geq {\mathbb P}_{p_c}(N(A)\neq 0) - {\mathbb P}_{p_c}(\{N(A)\neq 0\}\setminus E)$, we should show that
\begin{equation}\label{eq:NAauxbound}
{\mathbb P}_{p_c}(\{N(A)\neq 0\}\setminus E) \leq \frac 12 {\mathbb P}_{p_c}(N(A)\neq 0) ,\
\end{equation}
when $\varepsilon$ is chosen small enough and $R$ large enough.

If $N(A) \neq 0$ and $E$ does not occur, then according to Proposition~\ref{prCoupling},
there exist $x\in \Z^d$ with $x\stackrel{r}\sim 0$ and $x\neq 0$, a vertex $z\in \Z^d$,
and distinct vertices $v_1$ and $v_2$ in $\Z^d$ with $v_1\stackrel{r}\sim v_2$ such that
the following disjoint connections take place in $\Z^d$:
    \[
    \{0\stackrel{\leq k}\longleftrightarrow z\}\circ
\{z\stackrel{\leq k}\longleftrightarrow v_1\}\circ
\{z\stackrel{\leq k}\longleftrightarrow v_2\}\circ
\{v_1\stackrel{\leq k}\longleftrightarrow u(x)\}.
    \]
By the BK inequality, the probability of the event $\{N(A)\neq 0\}\setminus E$ is bounded from above by
\begin{equation}\label{EqNAminusCK}
	\sum_{x\stackrel{r}\sim 0, x\neq 0}\sum_{v_1\stackrel{r}\sim v_2, v_1\neq v_2} \sum_{z}
	{\mathbb P}_{p_c}\left(0\stackrel{\leq k}\longleftrightarrow z\right)
	{\mathbb P}_{p_c}\left(z\stackrel{\leq k}\longleftrightarrow v_1\right)
	{\mathbb P}_{p_c}\left(z\stackrel{\leq k}\longleftrightarrow v_2\right)
	{\mathbb P}_{p_c}\left(v_1\stackrel{\leq k}\longleftrightarrow u(x)\right) .\
	\end{equation}
Note that, since $v_1$ and $v_2$ are distinct and $r$-equivalent, either $|v_1-z|\geq r/2$ or $|v_2 - z|\geq r/2$.
Assume first that $|v_2 - z|\geq r/2$. It follows from \eqref{eq:ShortConn:r} that for any $v_1$ and $z$ fixed,
	\[
	\sum_{v_2\in \Z^d,~ v_2\stackrel{r}\sim v_1,~ |v_2-z| \geq r/2}
	{\mathbb P}_{p_c}\left(z\stackrel{\leq k}\longleftrightarrow v_2\right)
	\leq C_5 k/V,
    \]
where $C_5$ does not depend on $k$, $v_1$ or $z$.
On the other hand, by \eqref{eq:ShortConn:3r} and using the fact that $k\leq \varepsilon V^{1/3}$,
	\begin{equation}\label{EqNAminusCK1}
	\sum_{x\stackrel{r}\sim 0, x\neq 0}\sum_{v_1,z}
	{\mathbb P}_{p_c}\left(0\stackrel{\leq k}\longleftrightarrow z\right)
	{\mathbb P}_{p_c}\left(z\stackrel{\leq k}\longleftrightarrow v_1\right)
	{\mathbb P}_{p_c}\left(v_1\stackrel{\leq k}\longleftrightarrow u(x)\right) 
	\leq \frac{C_6 k^3}{V}
	\leq C_6 \varepsilon^3 .\
	\end{equation}
Therefore, for $v_2$ and $z$ with $|v_2-z|\geq r/2$, the sum \eqref{EqNAminusCK} is,
uniformly in $R$, bounded from above by
    \[
    C_5C_6\varepsilon^3k/V.
    \]
Next, consider the sum (\ref{EqNAminusCK}) in the case $|v_1-z|\geq r/2$.
By translation invariance, the sum (\ref{EqNAminusCK}) equals
    \[
    \sum_{x\stackrel{r}\sim 0,~ x\neq 0}\sum_{v_1\stackrel{r}\sim v_2,~ v_1\neq v_2} \sum_{z}
{\mathbb P}_{p_c}\left(0\stackrel{\leq k}\longleftrightarrow z\right)
{\mathbb P}_{p_c}\left(z\stackrel{\leq k}\longleftrightarrow v_1\right)
{\mathbb P}_{p_c}\left(z\stackrel{\leq k}\longleftrightarrow v_2\right)
{\mathbb P}_{p_c}\left(v_2\stackrel{\leq k}\longleftrightarrow u(x)+(v_2-v_1)\right) .\
    \]
By the definition of $u(x)$, the translation of $u(x)$ by $(v_2-v_1)$ equals $u(x + (v_2-v_1))$.
Since $v_1\stackrel{r}\sim v_2$, the translation of $x$ ($r$-equivalent to $0$) by $(v_2-v_1)$ is still $r$-equivalent to $0$.
However, note that it is possible that $x + (v_2-v_1) = 0$.
These observations imply that the above sum is bounded from above by
    \[
    \sum_{x\stackrel{r}\sim 0}\sum_{v_1\stackrel{r}\sim v_2, v_1\neq v_2} \sum_{z}
{\mathbb P}_{p_c}\left(0\stackrel{\leq k}\longleftrightarrow z\right)
{\mathbb P}_{p_c}\left(z\stackrel{\leq k}\longleftrightarrow v_1\right)
{\mathbb P}_{p_c}\left(z\stackrel{\leq k}\longleftrightarrow v_2\right)
{\mathbb P}_{p_c}\left(v_2\stackrel{\leq k}\longleftrightarrow u(x)\right) .\
    \]
Since we only consider the above sum in the case $z$ and $v_1$ satisfy $|v_1-z|\geq r/2$, we obtain as before that 
for any given $z$ and $v_2$, 
    \[
\sum_{v_1\in \Z^d,~ v_1\stackrel{r}\sim v_2,~ |v_1-z| \geq r/2}
{\mathbb P}_{p_c}\left(z\stackrel{\leq k}\longleftrightarrow v_1\right)
\leq C_5 k/V .\
    \]
It remains to bound the sum
    \[
    \sum_{z,v_2}\sum_{x\stackrel{r}\sim 0}
{\mathbb P}_{p_c}\left(0\stackrel{\leq k}\longleftrightarrow z\right)
{\mathbb P}_{p_c}\left(z\stackrel{\leq k}\longleftrightarrow v_2\right)
{\mathbb P}_{p_c}\left(v_2\stackrel{\leq k}\longleftrightarrow u(x)\right) .\
    \]
There are two cases depending on whether $x \neq 0$ or $x = 0$.
The case $x \neq 0$ can be considered similarly to (\ref{EqNAminusCK1}),
so the above sum is bounded from above by $C_6\varepsilon^3$ in this case.

It remains to consider the case $x = 0$. In this case $|u(x)| = R$, and 
we simply bound the above sum by 
\[
\sum_{z,v_2}
{\mathbb P}_{p_c}\left(0\leftrightarrow z\right)
{\mathbb P}_{p_c}\left(z\leftrightarrow v_2\right)
{\mathbb P}_{p_c}\left(v_2\leftrightarrow u(0)\right) ,\
\]
which is bounded from above by $C_7 R^{6-d}$ by \eqref{propTriangle}, where $C_7$ is independent of $R$. 
Therefore, for $v_1$ and $z$ with $|v_1-z|\geq r/2$, the sum (\ref{EqNAminusCK}) is bounded from above by
    \[
    C_5(2C_6\varepsilon^3 + C_7R^{6-d})k/V.
    \]
Now recalling \eqref{eq:NAsecondmoment}, we take $R$ large and $\varepsilon$ small so that \eqref{eq:NAauxbound} holds.
We obtain from \eqref{eq:NAsecondmoment} and \eqref{eq:NAauxbound} that
    \[
    {\mathbb P}_{p_c}(E) \geq C_3 k/(4V) .\
    \]
{\it Step~3.} We now show that there exists $C_8 = C_8(R)>0$ such that
    \[
    {\mathbb P}_{p_c}(\LC{k+R^d}) \geq C_8 {\mathbb P}_{p_c}(E).
    \]
This follows from a local modification argument as follows.
Note that if $E$ occurs, there exist $z$ and $z'$ on $\partial \Qn{R}$ such that 
$z$ is connected to $z'$ by a path in ${\mathbb T}_r^d\setminus \Qn{R}$ of length at most $k$
which visits $\partial \Qn{r/2}$. 
We can therefore modify the configuration of bonds inside $\Qn{R}$ to make sure that
$\{0\leftrightarrow z~\mbox{in}~\Qn{R}\}\circ \{0\leftrightarrow z'~\mbox{in}~\Qn{R}\}$,
which implies that the origin is in a long cycle of length at most $k+R^d$. 
Since there are only finitely many edges in $\Qn{R}$, the above inequality follows.

\medskip

We can now complete the proof of Proposition~\ref{prNCCExponent2}.
We pick $K=K(d)$ so that \eqref{eq:ENAlowerbound} holds for all $r >16R$.
We then pick $\varepsilon=\varepsilon(d)$ and $R=R(d)$ to satisfy \eqref{eq:NA2auxbound} and \eqref{eq:NAauxbound}.
It follows from Steps~2 and 3 of the proof that for all $r>16R$ and $k\in [Kr^2+R^d,\varepsilon V^{1/3}]$,
    \[
    {\mathbb P}_{p_c}(\LC{k}) \geq C_8\frac{C_3(k-R^d)}{4V} \geq C_9k/V.
    \]
Finally, we adjust the constant $C_9$ so that the result remained valid for $r\leq 16R$.
\end{proof}

\section{Proof of Theorem~\ref{thmNCCExistence}}
\label{sec-PfthmNCCExistence}

\begin{proof}[Proof of \eqref{eq:LCexistence:1}]
Let $\varepsilon>0$ and $\delta>0$. 
Let $M$ be the number of vertices $x\in\torus$ such that $|\mathcal C_{\sT}(x)|\geq \delta V^{2/3}$ and 
$\mathcal C_{\sT}(x)$ contains a long cycle of length at most $\varepsilon V^{1/3}$. 
We need to show that there exists $C<\infty$ such that 
	\[
	{\mathbb P}_{\sT,p_c}\left(M\neq 0\right) \leq C\varepsilon/\delta .\
	\]
By the definition of $M$ and the Markov inequality,
	\[
	{\mathbb P}_{\sT,p_c}\left(M\neq 0\right) = {\mathbb P}_{\sT,p_c}\left(M\geq \delta V^{2/3}\right)
	\leq \frac{\mathbb E_{\sT,p_c}[M]}{\delta V^{2/3}} .\
	\]
By translation invariance,
	\[
	\mathbb E_{\sT,p_c}[M]
	\leq V\cdot 
	{\mathbb P}_{\sT,p_c}\left(\mathcal C_{\sT}(0)\mbox{ contains a long cycle of 
	length at most }\varepsilon V^{1/3}\right) .\
	\]
Note that if $\mathcal C_{\sT}(0)$ contains a long cycle of length at most $\varepsilon V^{1/3}$, then 
there exists $z\in \mathcal C_{\sT}(0)$ such that the following events occur disjointly:
	\[
	\{0\leftrightarrow z\mbox{ in }\torus\}\circ \LC{\varepsilon V^{1/3}} .\
	\]
Therefore, application of the BK inequality gives that
	\[
	{\mathbb P}_{\sT,p_c}\left(\mathcal C_{\sT}(0)\mbox{ contains a long cycle of 
	length at most }\varepsilon V^{1/3}\right)
	\leq
	\mathbb E_{\sT,p_c}|\mathcal C_{\sT}(0)|\cdot \mathbb P_{\sT,p_c}(\LC{\varepsilon V^{1/3}}) .\
	\]
Using \eqref{eq:expCT} and Proposition~\ref{prNCCExponent1}, and putting all the bounds together gives \eqref{eq:LCexistence:1}. 
\end{proof}

\medskip

\begin{proof}[Proof of \eqref{eq:LCexistence:2}]
This is a simple consequence of Theorem \ref{thmNCCExponent}. 
Indeed, if there exists a long cycle of length at least
$\varepsilon^{-1} V^{1/3}$, then the number of vertices in long cycles is at least
$(2d \varepsilon)^{-1} V^{1/3}$. Denote the number of vertices in long cycles by $M$. 
Then, by the Markov inequality and Theorem \ref{thmNCCExponent},
\begin{multline*}
{\mathbb P}_{p_c}
\left(\exists \mbox{ a long cycle of length}~\geq \varepsilon^{-1} V^{1/3}\right)
\leq {\mathbb P}_{p_c}(M\geq (2d \varepsilon)^{-1} V^{1/3})
\leq 2d\varepsilon V^{-1/3}
{\mathbb E}_{p_c}[M]\\
=
2d\varepsilon V^{2/3}
{\mathbb P}_{p_c}
\left(\mbox{$0$ is in a long cycle}\right)
\leq C_1 \varepsilon .
\end{multline*}
\end{proof}

\section{Proof of Theorems~\ref{thm:Ytight} and \ref{thmExisNCC}}
\label{sec-PfthmNumNCC_and_thmExisNCC}

\subsection{Proof of Theorem~\ref{thm:Ytight}}\label{sec:Ytight}
Let $\mathcal I$ be the set of $z\in \mathcal C_{\sT}(0)$ such that
$\{0\leftrightarrow z\}\circ\{z\mbox { is in a long cycle}\}$.
Theorem~\ref{thm:Ytight} follows from Lemma \ref{l:YIJ}:
\begin{lemma}[Bound on the number of long cycles]
\label{l:YIJ}
    \[
    Y_{\mathcal C_{\sT}(0)} \leq 2d~ |\mathcal I| .\
    \]
Moreover, when (\ref{prop2pf}) holds, there
exists a finite constant $C$ such that
    \[
    {\mathbb E}_{p_c}|\mathcal I| \leq CV^{-1/3} .\
    \]
\end{lemma}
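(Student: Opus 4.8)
The plan is to prove the two claims separately. For the deterministic inequality $Y_{\mathcal C_{\sT}(0)} \leq 2d|\mathcal I|$, I would argue as follows. Recall that $Y_{\mathcal C_{\sT}(0)}$ is the smallest number of edges whose removal destroys all long cycles in $\mathcal C_{\sT}(0)$, and that $\mathcal I$ is the set of $z\in\mathcal C_{\sT}(0)$ that lie in some long cycle inside the cluster (with the extra disjointness $\{0\leftrightarrow z\}\circ\{z\text{ in a long cycle}\}$, which is automatic here since $z$ is in $\mathcal C_{\sT}(0)$ and the long cycle through $z$ is already edge-disjoint from a path $0\to z$ outside it, or can be arranged by BK-type surgery — I would simply note $\mathcal I$ equals the set of $z\in\mathcal C_{\sT}(0)$ lying on a long cycle). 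The key point is that any long cycle has length at least... well, at least it visits $\partial \Qn{r/4}(x(n))$ for each vertex, so it has many vertices, but more usefully: every long cycle passing through a vertex $z$ contributes $z$ to $\mathcal I$. Now take the set $F$ of all edges incident to vertices of $\mathcal I$; then $|F|\leq 2d|\mathcal I|$ since each vertex has degree $2d$. Removing $F$ from $\mathcal C_{\sT}(0)$ removes every vertex of $\mathcal I$ as a non-isolated vertex, hence destroys every long cycle (a long cycle through $z\in\mathcal I$ uses an edge incident to $z$). Therefore $Y_{\mathcal C_{\sT}(0)}\leq |F| \leq 2d|\mathcal I|$.

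For the expectation bound, I would write, by definition of $\mathcal I$ and the BK inequality,
\[
{\mathbb E}_{p_c}|\mathcal I| = \sum_{z\in\torus}{\mathbb P}_{p_c}\big(\{0\leftrightarrow z\}\circ\{z\text{ is in a long cycle}\}\big)
\leq \sum_{z\in\torus}\tau_{\sT,p_c}(0,z)\,{\mathbb P}_{p_c}(z\text{ is in a long cycle}).
\]
By translation invariance on the torus, ${\mathbb P}_{p_c}(z\text{ is in a long cycle})$ does not depend on $z$ and equals ${\mathbb P}_{p_c}(0\text{ is in an open long cycle})$, which is $O(V^{-2/3})$ by Proposition~\ref{prNCCExponent1}. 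Pulling this factor out gives
\[
{\mathbb E}_{p_c}|\mathcal I| \leq {\mathbb P}_{p_c}(0\text{ is in an open long cycle})\sum_{z\in\torus}\tau_{\sT,p_c}(0,z)
= {\mathbb P}_{p_c}(0\text{ is in an open long cycle})\cdot{\mathbb E}_{\sT,p_c}|\mathcal C_{\sT}(0)|.
\]
Now ${\mathbb E}_{\sT,p_c}|\mathcal C_{\sT}(0)|\leq CV^{1/3}$ by Theorem~\ref{thm:expCT}, so the product is $O(V^{-2/3}\cdot V^{1/3}) = O(V^{-1/3})$, as required.

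The main obstacle I anticipate is the deterministic inequality, specifically getting the combinatorics of "which edges to remove" exactly right and confirming the factor $2d$ is the intended bookkeeping: one must be careful that removing all edges incident to $\mathcal I$ genuinely kills \emph{every} long cycle (which it does, since a cycle through $z$ must traverse two edges at $z$), while not over/undercounting when long cycles share vertices — but since we bound $|F|\le 2d|\mathcal I|$ crudely, overcounting is harmless. The probabilistic half is routine once Proposition~\ref{prNCCExponent1} and Theorem~\ref{thm:expCT} are in hand; the only subtlety is justifying the BK factorization $\{0\leftrightarrow z\}\circ\{z\text{ in a long cycle}\}$, which is exactly how $\mathcal I$ was defined, so no work is needed there.
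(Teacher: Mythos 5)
Your probabilistic half is exactly the paper's argument (BK inequality, translation invariance, Proposition~\ref{prNCCExponent1}/Theorem~\ref{thmNCCExponent} for the $V^{-2/3}$ factor, and Theorem~\ref{thm:expCT} for ${\mathbb E}_{\sT,p_c}|\mathcal C_{\sT}(0)|\leq CV^{1/3}$), and it is correct as written. The deterministic half, however, has a genuine gap. You base it on the claim that ``$\mathcal I$ equals the set of $z\in\mathcal C_{\sT}(0)$ lying on a long cycle,'' asserting that the disjointness in the definition of $\mathcal I$ is automatic because ``the long cycle through $z$ is already edge-disjoint from a path $0\to z$ outside it.'' This is false in general: if, say, $\mathcal C_{\sT}(0)$ consists exactly of one long cycle $\pi$ through the origin, then for $z\in\pi$, $z\neq 0$, every path from $0$ to $z$ uses edges of $\pi$ and $\pi$ is the only available long-cycle witness, so the two events do \emph{not} occur disjointly and $z\notin\mathcal I$; here $\mathcal I=\{0\}$ while every vertex of $\pi$ lies on a long cycle. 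Consequently, your justification that removing all edges incident to $\mathcal I$ destroys \emph{every} long cycle (``a long cycle through $z\in\mathcal I$ uses an edge incident to $z$'') only covers cycles that happen to pass through $\mathcal I$, and your reason why all of them do is the incorrect identification above.

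What the argument actually needs — and what the paper proves — is the weaker statement that every long cycle $\pi$ in $\mathcal C_{\sT}(0)$ contains \emph{at least one} vertex of $\mathcal I$. This requires a short but nontrivial argument: if $0\in\pi$ take $z=0$ (the empty path witnesses $0\leftrightarrow z$ disjointly from $\pi$); otherwise take a path in $\mathcal C_{\sT}(0)$ from $0$ to $\pi$ and let $z$ be its first vertex on $\pi$, so that the initial segment of the path uses no edge of $\pi$ and hence $0\leftrightarrow z$ and the long cycle $\pi$ occur edge-disjointly, giving $z\in\mathcal I$. (The paper phrases this as: any long cycle surviving in $\mathcal C_{\sT}(0)$ minus the edges incident to $\mathcal I$ would have a vertex $z$ reachable from $0$ by a path avoiding the cycle's edges, so $z\in\mathcal I$ yet $z$ is not isolated — a contradiction.) With this repair your counting $|F|\leq 2d|\mathcal I|$ and the conclusion $Y_{\mathcal C_{\sT}(0)}\leq 2d|\mathcal I|$ go through exactly as in the paper; without it, the key step ``removing $F$ destroys every long cycle'' is unjustified.
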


Before we prove Lemma \ref{l:YIJ}, we show how to
use it to complete the proof of Theorem~\ref{thm:Ytight}:
    \[
     {\mathbb E}_{p_c} \left[Y_\delta\right]
     =
     \sum_x {\mathbb E}_{p_c}\left[\frac{1}{\mathcal C_{\sT}(x)} Y_{\mathcal C_{\sT}(x)} I(|\mathcal C_{\sT}(x)| >\delta V^{2/3})\right]
     \leq
     (\delta V^{2/3})^{-1} V {\mathbb E}_{p_c} \left[Y_{\mathcal C_{\sT}(0)}\right]
     \leq
     C_1/\delta,
    \]
as required. In the remaining part of this section, we prove Lemma \ref{l:YIJ}.

\begin{proof}[Proof of Lemma~\ref{l:YIJ}]
Let $\mathcal E$ be the set of edges of the torus adjacent to at least one of the vertices from $\mathcal I$.
We will show that $Y_{\mathcal C_{\sT}(0)} \leq |\mathcal E|$.

Let $G=\mathcal C_{\sT}(0)$, and let $\widetilde G= G\setminus \mathcal E$
denote the subgraph of $G$ obtained by removing every edge of $G$ that is in $\mathcal E$.
Note that every vertex from $\mathcal I$ is an isolated vertex in $\widetilde G$.
We claim that the graph $\widetilde G$ does not contain long cycles.
Indeed, assume that there is a long cycle $\pi$ in $\widetilde G$. Since $\widetilde G$ is a subgraph of $G$,
$\pi$ is a long cycle in $G$.
In particular, there exists $z\in\pi$ such that $0$ is connected to $z$ in $G$ by a path that does not use any edges from $\pi$.
Therefore, $z\in\mathcal I$ and $z$ is not an isolated vertex in $\widetilde G$. This is a contradiction.

We have just shown that by removing every edge adjacent to a vertex in $\mathcal I$,
we obtain a subgraph of $\mathcal C_{\sT}(0)$ without long cycles.
This implies that $Y_{\mathcal C_{\sT}(0)} \leq |\mathcal E| \leq 2d |\mathcal I|$.

Further, by the BK inequality, \eqref{eq:expCT} and Theorem \ref{thmNCCExponent},
    \[
    {\mathbb E}_{p_c}|\mathcal I|
    \leq
    {\mathbb E}_{p_c}|\mathcal C_{\sT}(0)|~ {\mathbb P}_{p_c}(0\mbox{ is in a long cycle})
    \leq C_2 V^{-1/3} .\
    \]
\end{proof}

\subsection{Proof of Theorem~\ref{thmExisNCC}(a): existence of long cycles}

We need to show that
there exists $c>0$ such that
    \[
     {\mathbb P}_{p_c}\left(\mbox{there exists a long cycle of length }>c V^{1/3}\right) >c .\
    \]
Take $\varepsilon >0$. The precise value of $\varepsilon$ will be determined later. Define
    \[
     M=|\{x\colon x \mbox{ is in a long cycle of length }>\varepsilon V^{1/3}\}|.
    \]
Then, clearly,
    \begin{equation}
    {\mathbb P}_{\sT, p_c}\left(\exists \mbox{ a long cycle of length }>\varepsilon V^{1/3} \right)
    = {\mathbb P}_{\sT,p_c}(M\neq 0).
    \end{equation}
By the second moment method, we can bound
    \[
    {\mathbb P}_{\sT,p_c}(M\neq 0) \geq \frac{({\mathbb E}_{\sT, p_c}M)^2}{{\mathbb E}_{\sT, p_c}M^2}.
    \]
We first show that ${\mathbb E}_{\sT, p_c}M \geq C_3V^{1/3}$, and then that
${\mathbb E}_{\sT, p_c}M^2 \leq C_4V^{2/3}$.
By translation invariance,
    \[
    {\mathbb E}_{\sT, p_c}M
    =
    V {\mathbb P}_{\sT, p_c}(0 \mbox{ in a long cycle of length }>\varepsilon V^{1/3}) .\
    \]
Recall the definition of $\LC{k}$. 
We write
    \[
    {\mathbb P}_{\sT, p_c}(0 \mbox{ in a long cycle of length }>\varepsilon V^{1/3})
    \geq
    {\mathbb P}_{\sT, p_c}(0 \mbox{ in a long cycle})
    -
    {\mathbb P}_{\sT, p_c}(\LC{\varepsilon V^{1/3}}) .\
    \]
It follows from Theorem~\ref{thmNCCExponent} that
    \[
    {\mathbb P}_{\sT, p_c}(0 \mbox{ in a long cycle}) > C_5V^{-2/3} ,\
    \]
and from Proposition~\ref{prNCCExponent1} that 
    \begin{equation}
    \label{eq:EvarepsilonV13}
    {\mathbb P}_{\sT, p_c}(\LC{\varepsilon V^{1/3}}) < C_6\varepsilon V^{-2/3} .\
    \end{equation}
By taking $\varepsilon$ small enough, we deduce that 
\[
{\mathbb P}_{\sT, p_c}(0 \mbox{ in a long cycle of length }>\varepsilon V^{1/3})
\geq
\frac 12 {\mathbb P}_{\sT, p_c}(0 \mbox{ in a long cycle}) ,\
\]
and the desired lower bound on ${\mathbb E}_{\sT, p_c}M$ follows. 

\medskip

It remains to prove that ${\mathbb E}_{\sT, p_c}M^2 \leq C_4 V^{2/3}$. Since
    \[
    {\mathbb E}_{\sT, p_c}M^2
    =\sum_{x,y} {\mathbb P}_{\sT,p_c}\left(x, y \mbox{ in long cycles of length }>\varepsilon V^{1/3}\right) ,\
    \]
it suffices to show that
\[
 \sum_{x,y} {\mathbb P}_{\sT,p_c}\left(x, y \mbox{ in long cycles}\right) \leq C_4 V^{2/3} .\
\]
We split the above sum, depending on whether the events
$\{x \mbox{ in long cycle}\}$ and
$\{y \mbox{ in long cycle}\}$ occur disjointly or not.
The contribution where these events do occur disjointly can be bounded, using the BK inequality and Theorem~\ref{thmNCCExponent}, by
$C_7V^{2/3}$, so that
    \begin{equation}\label{eq:M2}
    {\mathbb E}_{\sT, p_c}M^2
    \leq C_7V^{2/3}
    +
    \sum_{x,y}{\mathbb P}_{\sT,p_c}\left(x, y \mbox{ in overlapping long cycles}\right),
    \end{equation}
where the event $\{x, y \mbox{ in overlapping long cycles}\}$
indicates that all pairs of long cycles, one of which
contains $x$ and the other $y$, share at least one edge.

\begin{lemma}[Contribution of overlapping cycles]
\label{lem-overlap}
Assume (\ref{prop2pf}).
There exists $C<\infty$ such that
    \begin{equation}
    \label{to-do-lb-exis-NCC}
    \sum_{x,y}{\mathbb P}_{\sT,p_c}\left(x, y \mbox{ in overlapping long cycles}\right)
    \leq CV^{2/3}.
    \end{equation}
\end{lemma}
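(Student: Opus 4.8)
By translation invariance, $\sum_{x,y}{\mathbb P}_{\sT,p_c}(x,y\text{ in overlapping long cycles}) = V\,{\mathbb E}_{p_c}[N]$, where $N = \#\{y\in\torus\colon 0\text{ and }y\text{ lie in overlapping long cycles}\}$, so it suffices to prove ${\mathbb E}_{p_c}[N]\le CV^{-1/3}$. The event forces $0\leftrightarrow y$ and the existence of a long cycle $C_0\ni 0$ and a long cycle $C_y\ni y$ with $C_0\cap C_y\neq\varnothing$ (since no two long cycles, one through $0$ and one through $y$, occur disjointly). I would split $N=N_1+N_2$ according to whether $0$ and $y$ can be chosen on a \emph{common} long cycle ($N_1$), or not ($N_2$), and bound ${\mathbb E}_{p_c}[N_1]$ and ${\mathbb E}_{p_c}[N_2]$ separately.

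\textbf{Bounding $N_1$.} If $0$ and $y$ lie on a common long cycle $C$, split $C$ at $0$ and $y$ into two edge-disjoint arcs $\pi_1,\pi_2$. Since $C$ is long at $0$, one of them, say $\pi_1$, contains a vertex of $\partial\Qn{r/4}$, so $\pi_1$ is a witness of the event $E(y):=\{0\leftrightarrow y\text{ by an open path visiting }\partial\Qn{r/4}\}$, while $\pi_2$ witnesses $\{0\leftrightarrow y\text{ in }\torus\}$. By the BK inequality, ${\mathbb P}_{\sT,p_c}(0,y\text{ on a common long cycle}) \le {\mathbb P}_{\sT,p_c}(E(y))\,\tau_{\sT,p_c}(0,y)$. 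Applying \eqref{eq:torus2pf:n} with $n=\lfloor r/4\rfloor$ and using that $r^{2-d}=V^{(2-d)/d}\le V^{-2/3}$ for $d\ge 6$, we get ${\mathbb P}_{\sT,p_c}(E(y))\le C(r/4)^{2-d}+CV^{-2/3}\le C'V^{-2/3}$. Summing over $y$ and invoking Theorem~\ref{thm:expCT},
\[
{\mathbb E}_{p_c}[N_1]\le C'V^{-2/3}\sum_{y\in\torus}\tau_{\sT,p_c}(0,y)=C'V^{-2/3}\,{\mathbb E}_{\sT,p_c}|{\mathcal C}_{\sT}(0)|\le C''V^{-1/3}.
\]

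\textbf{Bounding $N_2$.} Here $C_0\neq C_y$ share an edge, so $K:=C_0\cup C_y$ is $2$-connected and has cycle rank at least $2$; hence $K$ contains a ``theta'' skeleton --- two vertices joined by three pairwise edge-disjoint open paths --- together with edge-disjoint segments reaching $0$, $y$, a vertex $z_0\in\partial\Qn{r/4}(0)$ provided by the longness of $C_0$ at $0$, and a vertex $w_0\in\partial\Qn{r/4}(y)$ provided by the longness of $C_y$ at $y$ (note also that a long cycle has at least $\lfloor r/4\rfloor$ edges, so all constituents are ``large''). After a case analysis on how $0,y,z_0,w_0$ distribute over the three paths and the pendant segments, one extracts in every case a family of pairwise edge-disjoint open connections containing a local connection $\{0\leftrightarrow z_0\text{ in }\Qn{r/4}(0)\}$, a local connection $\{y\leftrightarrow w_0\text{ in }\Qn{r/4}(y)\}$, and enough further connections between vertices at $\ell^\infty$-distance $\ge r/4$ to supply two factors of $V^{-2/3}$. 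Applying the BK inequality, summing the two local factors by Theorem~\ref{thm:ConnInsideBalls}(a) (each contributes $O(1)$), bounding each long factor by $\tau_{\sZ,p_c}(\cdot)+CV^{-2/3}$ via \eqref{eq:torus2pf} and \eqref{prop2pf}, and using $\sum_{y\in\torus}\tau_{\sZ,p_c}(0,y)\le Cr^2=CV^{2/d}$ from \eqref{propSizeCn}, the dominant contribution is $O(1)\cdot O(1)\cdot V^{-2/3}\cdot V^{-2/3}\cdot V=V^{-1/3}$, while every remaining term carries a surplus factor $V^{2/d-2/3}\le V^{-1/3}$ (using $d>6$) or is smaller. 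This yields ${\mathbb E}_{p_c}[N_2]\le CV^{-1/3}$ and completes the proof.

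\textbf{Main obstacle.} The technical heart is the combinatorial extraction in the $N_2$ case: showing that from an arbitrary pair of distinct, edge-overlapping long cycles one can always read off --- after handling the degenerate positions of $0$ and $y$ on the shared part, pendant segments that re-enter $K$, and figure-eight-type unions --- a skeleton of pairwise edge-disjoint open connections with one local piece near $0$, one local piece near $y$, and two genuinely long (distance $\ge r/4$) pieces, with none of the required pieces forced to be trivial, so that the BK inequality together with Theorems~\ref{thm:ConnInsideBalls}--\ref{thmTwoPointTorus} closes the bound.
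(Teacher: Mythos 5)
Your reduction and your $N_1$ step are fine: splitting a common long cycle at $0$ and $y$ into two edge-disjoint arcs, noting one arc visits $\partial\Qn{r/4}$, and applying BK with \eqref{eq:torus2pf:n} and Theorem~\ref{thm:expCT} is a correct (and clean) argument for that case. The problem is that the whole weight of the lemma sits in your $N_2$ case, and there the proof is not actually given. You assert that from any pair of distinct, edge-overlapping long cycles one can extract a pairwise edge-disjoint family consisting of a local connection $\{0\leftrightarrow z_0\text{ in }\Qn{r/4}\}$, a local connection $\{y\leftrightarrow w_0\text{ in }\Qn{r/4}(y)\}$, and two further connections between vertices at distance $\geq r/4$, ``after a case analysis'' that is never carried out --- and you yourself flag this extraction as the technical heart. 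It is genuinely not obvious: when $C_0$ and $C_y$ share most of their edges, the natural long piece of $C_0$ at $0$ and the natural long piece of $C_y$ at $y$ can overlap heavily, and it is not clear that two \emph{edge-disjoint} long witnesses survive. Moreover, the diagrammatic bookkeeping is only asserted: you cannot simultaneously bound a connection factor by $CV^{-2/3}$ (a sup bound) \emph{and} sum freely over its endpoints, so the count ``$O(1)\cdot O(1)\cdot V^{-2/3}\cdot V^{-2/3}\cdot V$'' presupposes a specific skeleton (which endpoints are fixed, which are summed, which pairs are guaranteed to be at distance $\geq r/4$) that you never exhibit; the catch-all claim that ``every remaining term carries a surplus factor $V^{2/d-2/3}$'' is likewise unverified. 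As it stands, $N_2$ is a plan, not a proof.

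For comparison, the paper avoids your dichotomy altogether. It extracts, for any $x,y$ in overlapping long cycles, two vertices $u,v$ such that the five connections $x\leftrightarrow u$, $u\leftrightarrow v$, $x\leftrightarrow v$, $y\leftrightarrow u$, $y\leftrightarrow v$ occur disjointly and at least one of the three connections among $x,u,v$ realizes the event $\Delta_{r/12}(\cdot,\cdot)$ (a connection through $\partial\Qn{r/12}(\cdot)$). Only that single factor is bounded by $CV^{-2/3}$ via \eqref{eq:torus2pf:n}; the remaining four two-point functions are summed using the bounded torus triangle $\nabla_{\sT,p_c}<\infty$ (from \cite{BCHSS05b,HH}) and ${\mathbb E}_{\sT,p_c}|\mathcal C_{\sT}(0)|\leq CV^{1/3}$, giving $V^{-2/3}\cdot V\cdot\nabla_{\sT,p_c}\cdot{\mathbb E}|\mathcal C_{\sT}(0)|\leq CV^{2/3}$. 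Your route tries to trade the triangle-diagram input for a second factor of $V^{-2/3}$; that trade is exactly the step you have not established, so you should either complete the extraction and the case-by-case summation, or switch to a single-long-connection skeleton plus the triangle bound as in the paper.
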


\begin{proof} 
For a pair of vertices $x,y\in\torus$ and $s\geq 0$, we denote by $\Delta_s(x,y)$ the event that 
$x$ is connected to $y$ by a path in $\torus$ which visits $\partial \Qn{s}(x)$. 

Note that if $x$ and $y$ are in overlapping long cycles, then there exist $u,v$ such that
\begin{itemize}\itemsep1pt
\item[(a)]
$u$ and $v$ both are part of the long cycle that contains $x$ as well as the one that contains $y$, 
\item[(b)]
the connections $x\leftrightarrow u, u\leftrightarrow v, x\leftrightarrow v, y\leftrightarrow u, y\leftrightarrow v$ all occur disjointly,
\item[(c)]
at least one of the events $\Delta_{r/12}(x,u)$, $\Delta_{r/12}(u,v)$, or $\Delta_{r/12}(v,x)$ occur. 
\end{itemize}
Therefore, using symmetry and the BK inequality, we can upper bound the sum in \eqref{to-do-lb-exis-NCC} by
\eqan{
&4\sum_{x,y,u,v}{\mathbb P}_{\sT,p_c}(\Delta_{r/12}(x,u))\tau_{\sT,p_c}(x,v)\tau_{\sT,p_c}(u,v)\tau_{\sT,p_c}(y,u)\tau_{\sT,p_c}(y,v)\nonumber\\
&\qquad+\sum_{x,y,u,v}\tau_{\sT,p_c}(x,u)\tau_{\sT,p_c}(x,v){\mathbb P}_{\sT,p_c}(\Delta_{r/12}(u,v))\tau_{\sT,p_c}(y,u)\tau_{\sT,p_c}(y,v).\label{eq:overlapingcycles_xyuv}
}
By \eqref{eq:torus2pf:n}, we can bound ${\mathbb P}_{\sT,p_c}(\Delta_{r/12}(x,u))
\leq C_8V^{-2/3}$.
Let
    \eqn{
    \nabla_{\sT,p}=\sup_{x,y} \sum_{u,v}\tau_{\sT,p}(x,u)\tau_{\sT,p}(u,v)\tau_{\sT,p}(v,y) .\label{eq:nabla}
    }
It follows from \cite[Theorem 1.6(iii)]{BCHSS05b} and \cite[(1.6)]{HH}
that $\nabla_{\sT,p_c} < C_9$.
Therefore, we can bound the first sum in \eqref{eq:overlapingcycles_xyuv} by
    \eqn{
    C_8V^{-2/3}\sum_{x,y,u,v}\tau_{\sT,p_c}(x,v)\tau_{\sT,p_c}(u,v)\tau_{\sT,p_c}(y,u)\tau_{\sT,p_c}(y,v)
    \leq C_8V^{-2/3}V\nabla_{\sT,p_c}\expec_{\sT,p_c}|{\mathcal C}_{\sT}(0)|
    \stackrel{\eqref{eq:expCT}}\leq C_{10}V^{2/3},
    }
and the second sum in \eqref{eq:overlapingcycles_xyuv} by
    \eqan{
    &C_8V^{-2/3}\sum_{x,y,u,v}\tau_{\sT,p_c}(x,u)\tau_{\sT,p_c}(x,v)\tau_{\sT,p_c}(y,u)\tau_{\sT,p_c}(y,v)\nonumber\\
    &\quad \leq C_8V^{-2/3} V \sup_v\sum_{x,y,u,v}\tau_{\sT,p_c}(0,u)\tau_{\sT,p_c}(u,y)\tau_{\sT,p_c}(y,v)\sum_v\tau_{\sT,p_c}(v)\nonumber\\
    &\quad=C_8V^{-2/3} V  \nabla_{\sT,p_c}\expec_{\sT,p_c}|{\mathcal C}(0)|
    \stackrel{\eqref{eq:expCT}}\leq C_{10}V^{2/3}.\nonumber
    }
These estimates complete the proof.
\end{proof}

\medskip

We continue with the proof of Theorem~\ref{thmExisNCC}(a). 
It follows from \eqref{eq:M2} and Lemma~\ref{lem-overlap} that ${\mathbb E}_{\sT,p_c}M^2 \leq C_4V^{2/3}$.
By the second moment method, we obtain
    \[
    {\mathbb P}_{\sT, p_c}\left(\exists \mbox{ a long cycle of length }>\varepsilon V^{1/3}\right)
    = {\mathbb P}_{\sT,p_c}(M\neq 0)
    \geq \frac{({\mathbb E}_{\sT, p_c}M)^2}{{\mathbb E}_{\sT, p_c}M^2}
    \geq \frac{C_3^2}{C_4}>0 .\
    \]
This completes the proof of the fact that a long cycle of length $>\varepsilon V^{1/3}$ exists with positive probability.
\qed

\medskip

\subsection{Proof of Theorem~\ref{thmExisNCC}(b): non-existence of long cycles}

In this section we prove that, for any positive $\delta$, with positive
probability uniformly in $r$, the clusters of size $>\delta V^{2/3}$ do
not contain any long cycles.
In other words, recalling the definition of $Y_\delta$ in Section~\ref{sec:Ytight}, we will prove the following proposition:
\begin{proposition}[Non-existence of long cycles]
\label{prop:Ydelta=0}
For any positive $\delta$ there exists $c>0$ such that, for all $r\geq 1$,
    \[
     {\mathbb P}_{\sT,p_c}(Y_\delta =0)>c.\
    \]
\end{proposition}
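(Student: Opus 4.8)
The plan is to use a second-moment/truncated-moment argument on the \emph{number} of long cycles in large clusters, complemented by a local-modification step to convert expectation bounds into a positive lower bound on the probability $\{Y_\delta=0\}$. Recall from Theorem~\ref{thm:Ytight} that ${\mathbb E}_{\sT,p_c}[Y_\delta]\leq C/\delta$, which already gives a \emph{lower bound bounded away from $1$} on ${\mathbb P}_{\sT,p_c}(Y_\delta=0)$ whenever $\delta$ is large; but for small $\delta$ this is useless and the real content of the proposition is the uniformity in $r$ for \emph{every} fixed $\delta>0$. The main point is that long cycles in large clusters are ``expensive'': by Theorem~\ref{thmNCCExponent} the expected number of vertices in long cycles is $\asymp V^{1/3}$, and each such cycle has length $\asymp V^{1/3}$ (Theorems~\ref{thmNCCExistence} and \ref{thmExisNCC}(a)), so the expected number of long cycles in clusters of size $>\delta V^{2/3}$ is $O(1)$ and does not concentrate.

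First I would set up the counting variable. Let $Z_\delta$ denote the number of (edge-disjoint) long cycles lying in open clusters of size $>\delta V^{2/3}$; more precisely, following the proof of Lemma~\ref{l:YIJ}, it is cleaner to work with $|\mathcal I|$ restricted to large clusters, i.e.\ count vertices $z$ such that $z$ lies in a long cycle inside a cluster of size $>\delta V^{2/3}$, and divide by the $\asymp V^{1/3}$ length. By Theorem~\ref{thmNCCExponent} and translation invariance, ${\mathbb E}_{\sT,p_c}[\#\{z\colon z\text{ in a long cycle}\}]\asymp V^{1/3}$; restricting to large clusters only decreases this, so this count has expectation $O(V^{1/3})$. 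Combined with the lower bound on cycle lengths from \eqref{eq:LCexistence:1} of Theorem~\ref{thmNCCExistence} (with $\varepsilon$ chosen depending on $\delta$), with probability close to $1$ every long cycle in a cluster of size $>\delta V^{2/3}$ has length $\geq \varepsilon V^{1/3}$, hence the number $Z_\delta$ of long cycles in large clusters satisfies ${\mathbb E}_{\sT,p_c}[Z_\delta]\leq C(\delta)$ up to an event of small probability. By Markov, $Z_\delta=0$ (equivalently $Y_\delta=0$) with probability bounded away from $0$ \emph{provided} ${\mathbb E}_{\sT,p_c}[Z_\delta]$ can be made small — but it cannot be made small uniformly; it is only $O(1)$.

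To get past the ``$O(1)$, not $o(1)$'' obstruction, I would run a local-surgery argument in the spirit of Step~3 of the proof of Proposition~\ref{prNCCExponent2}, but in reverse: rather than creating a long cycle, destroy all of them in large clusters at a bounded cost. Condition on the percolation configuration outside a fixed box $\Qn{R}$ (with $R=R(\delta,d)$ a large constant chosen below); on the event that $\{Z_\delta>0\}$ but there are only $O(1)$ long cycles — which, by the previous paragraph, carries most of the probability mass of $\{Z_\delta>0\}$ — each such long cycle, being \emph{long}, must exit the box $\Qn{r/4}$ around each of its vertices and in particular is not trapped near the origin; the strategy is instead to show that with probability bounded below (uniformly in $r$) there are simply no large clusters carrying long cycles at all, by combining: (i) the first-moment bound ${\mathbb E}_{\sT,p_c}[Z_\delta]=O(1)$ with (ii) a second-moment estimate ${\mathbb E}_{\sT,p_c}[Z_\delta^2]=O(1)$ obtained exactly as in Lemma~\ref{lem-overlap} (overlapping-cycle contributions are $O(V^{2/3})$, disjoint contributions factor through the BK inequality and $\nabla_{\sT,p_c}<\infty$), which forces $Z_\delta$ to be a genuinely $O(1)$, non-degenerate integer random variable, so ${\mathbb P}_{\sT,p_c}(Z_\delta=0)\geq ({\mathbb E}[Z_\delta]^2)/({\mathbb E}[Z_\delta(Z_\delta-1)]+{\mathbb E}[Z_\delta])$ is bounded away from $0$ by the Paley–Zygmund-type inequality only if ${\mathbb E}[Z_\delta]$ is bounded away from $0$ as well — which is not what we want.

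\emph{Correction of the approach.} The correct tool is not Paley–Zygmund but a direct ``restart'' / sprinkling argument. I would argue as follows: by Theorem~\ref{thmExisNCC}(a) and its proof (the second-moment computation there), the event $\mathcal A=\{\exists\text{ a long cycle of length }>cV^{1/3}\}$ has ${\mathbb P}_{\sT,p_c}(\mathcal A)$ bounded both \emph{away from $0$ and from $1$}; the ``away from $1$'' part is what we need and follows because ${\mathbb E}_{\sT,p_c}[Z_\delta']\leq C$ where $Z_\delta'$ counts long cycles (of any length) in large clusters — already established via Theorem~\ref{thmNCCExponent} and $\nabla_{\sT,p_c}<\infty$ as above — so by Markov, ${\mathbb P}_{\sT,p_c}(Z_\delta'\geq 1)\leq C$, which is only informative if $C<1$, i.e.\ after possibly replacing the torus by a sub-scale or, more robustly, by conditioning. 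The clean route, which I would write up in detail, is: decompose $\{Z_\delta\geq1\}$ according to which large cluster $\mathcal C$ carries a long cycle and where that cluster meets a sparse net of boxes of side $R$; on the complementary event inside each box (probability bounded below by a constant depending only on $R$ and $d$), surgery inside that box disconnects the cycle; a union-bound over the $O(1)$ expected number of long cycles, together with the Harris/FKG inequality to combine the independent box-events, yields ${\mathbb P}_{\sT,p_c}(Y_\delta=0\mid Z_\delta\text{ finite and }O(1))\geq c(\delta,R,d)>0$, and the unconditional bound follows since $\{Z_\delta=O(1)\}$ has probability close to $1$ by Chebyshev using ${\mathbb E}[Z_\delta^2]=O(1)$ from the Lemma~\ref{lem-overlap}-type estimate.

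The step I expect to be the genuine obstacle is this last surgery/combination step: one must show that a \emph{single} bounded-size local modification can kill a long cycle regardless of the (unbounded) geometry of that cycle, and that one can do this simultaneously for the $O(1)$ long cycles present without the modifications interfering. For the first point, the key observation is that a long cycle, by definition, has large diameter, so it passes through many disjoint boxes of side $R$; pick a box $\Qn{R}(w)$ through which the cycle passes such that the cycle enters and leaves $\Qn{R}(w)$, and such that the configuration of the $O(R^d)$ edges inside $\Qn{R}(w)$ can be resampled to a configuration where the two boundary arcs of the cycle are no longer connected inside the box (closing a bounded set of edges suffices); since only finitely many edges are involved, this resampling has probability bounded below by a positive constant $c(R,d)$, uniformly in everything else, and is independent of the configuration outside $\Qn{R}(w)$. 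For the second point — handling $O(1)$ cycles at once — use that the total number of long cycles in large clusters has exponential tails (or at least ${\mathbb E}[Z_\delta^2]=O(1)$, giving polynomial control), truncate to $Z_\delta\leq N$ for a large constant $N$, and choose $N$ disjoint modification boxes (possible once $r$ is large since the cycles have large diameter and live in bounded-degree clusters), apply FKG/independence across the disjoint boxes. Putting the pieces together: ${\mathbb P}_{\sT,p_c}(Y_\delta=0)\geq {\mathbb P}(Z_\delta\leq N)\cdot c(R,d)^N - o(1)>0$ uniformly in $r\geq 1$ (the small $r$ case being trivial by adjusting $c$), which is the claim. I would double-check the edge case where the cluster carrying a long cycle is \emph{itself} contained in a bounded box — but that is impossible, since a long cycle has diameter $\geq r/4$, so large-cluster-plus-long-cycle forces the relevant geometry to be spread out, exactly as needed for the disjoint-box argument.
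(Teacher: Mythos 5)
There is a genuine gap, and it sits exactly where you predicted: the surgery/resampling step. Your plan selects modification boxes $\Qn{R}(w)$ \emph{because} a long cycle passes through them, and then asserts that the configuration inside each box can be resampled ``with probability bounded below by $c(R,d)$, independently of everything else.'' But the event that determines which boxes you pick is measurable with respect to the edges \emph{inside} those boxes (the cycle's arcs run through them), so under the conditional law given your selection the edges in the box are not i.i.d.\ Bernoulli --- they are conditioned to contain the crossing arcs. A configuration-dependent local modification of this kind needs either a multivalued-map/counting correction or an exploration scheme in which the relevant edges have genuinely \emph{not} been examined; you supply neither. This is precisely the difficulty the paper's proof is engineered around: it runs a two-stage depth-first exploration of each cluster that deliberately leaves unexamined a set $Z_x$ of ``special'' surplus edges, chosen (via the ordering through the auxiliary tree $\mathbf T_x$) so that (i) conditionally on the explored edges they are i.i.d.\ Bernoulli$(p_c)$, (ii) if all of them are closed the cluster has no long cycles, and (iii) every \emph{open} special edge is adjacent to the set $\mathcal I$ of Lemma~\ref{l:YIJ}, whence ${\mathbb E}_{p_c}|Z_0|\leq CV^{-1/3}$; Jensen's inequality then gives ${\mathbb P}_{p_c}(Y_\delta=0)\geq (1-p_c)^{C/\delta}$. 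No analogue of (i) is available in your construction.

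A second, independent problem is combinatorial: even if the resampling were legitimate, killing an $O(1)$ family of (edge-disjoint) long cycles does not give $Y_\delta=0$, which demands that \emph{no} long cycle survive in any large cluster. Distinct long cycles overlap, their total number is not $O(1)$ in expectation (it can be exponential in the tree excess --- think of a theta graph with three long arcs between two distant vertices, where closing an edge on one arc of your chosen representative cycle leaves the cycle formed by the other two arcs intact), and a cycle avoiding all of your chosen boxes is untouched by the surgery. The quantity that \emph{is} $O(1)$ in expectation is $Y_\delta$ itself (Theorem~\ref{thm:Ytight}), i.e.\ the minimal number of edge removals, but converting ``there exist $Y_\delta$ edges whose closure kills all long cycles'' into a probability lower bound is exactly the step that requires the conditional-independence structure above; Markov or second-moment bounds on cycle counts (your $Z_\delta$, the Lemma~\ref{lem-overlap}-type estimate) cannot substitute for it. So the proposal does not close the argument, and the missing ideas are the ones the paper's exploration algorithm provides.
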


\begin{proof}
For $x\in\torus$, run the following exploration of the edges of $\mathcal C_{\sT}(x)$ started from $x$.
Enumerate the edges of $\torus$. (In the algorithm we describe now, if
there are several edges to choose from, we always pick the edge with the smallest number.)
The first stage of the algorithm is the standard depth-first exploration.
At this stage, after $n$ steps, the algorithm produces
\begin{itemize}
\item
the set of explored vertices $X_n$
(which will be a subset of the vertices of $\mathcal C_{\sT}(x)$),
\item
the set of explored edges $E_n$ (these will be the explored edges, the occupancy of which we will check),
\item
the set of open explored edges $T_n$ and the open cluster induced by these edges, also denoted by $T_n$
(which will be part of the depth-first spanning tree of $\mathcal C_{\sT}(x)$), and
\item
the set of unexplored edges $U_n$ (the algorithm will not check the occupancy of these edges).
\end{itemize}
Further, let $W_n = E_n\cup U_n$.

Take $x\in\torus$. Let $X_0 = T_0 = \{x\}$, $W_0 = \varnothing$.
Let $n\geq 0$. Assume that $X_{n}$, $E_{n}$, $T_{n}$ and $U_n$ are defined.
If there is no edge $\{a,b\}$ with $a\in X_n$ and $\{a,b\}\notin W_n$, then we stop the algorithm and
write $\mathcal A_x = \mathcal A_n$ for all $\mathcal A\in \{X,E,T,U,W\}$.
Otherwise, pick the vertex $a\in X_n$ which is the farthest from $x$ in $T_{n}$ for which there exists $b\in\torus$ such that $\{a,b\} \notin W_n$. Such a vertex, if it exists, is always \emph{unique},
since we explore depth-first.
(We prove this statement in Lemma~\ref{l:dftree:properties}(a) at the end of the section.)

Let $e = \{a,b\}$ be the smallest such edge.
We distinguish two cases:
\begin{enumerate}
 \item
If $b\notin X_n$, then we define $E_{n+1} = E_{n}\cup\{e\}$, $U_{n+1} = U_n$, and {\it check} the occupancy of $e$.
\begin{enumerate}
\item
If $e$ is open, then we define $X_{n+1} = X_n\cup\{b\}$ and $T_{n+1} = T_n\cup\{e\}$.
\item
If $e$ is closed, then we define $X_{n+1} = X_n$ and $T_{n+1} = T_n$.
\end{enumerate}
\item
If $b\in X_n$ (in this case we call $e$ a {\it surplus} edge), then we define
$X_{n+1} = X_n$, $T_{n+1} = T_n$, $E_{n+1} = E_n$, and $U_{n+1}=U_n\cup\{e\}$, and {\it do not check} the occupancy of $e$.
\end{enumerate}
Since the number of edges of $\torus$ is finite, this stage of the algorithm will terminate at some step $N<\infty$.
We then write $\mathcal A_x = \mathcal A_N$ for all $\mathcal A\in \{X,E,T,U,W\}$.
In particular, $X_x$ is the vertex set of $\mathcal C_{\sT}(x)$,
$T_x$ is the ``depth-first'' spanning tree of $\mathcal C_{\sT}(x)$ with root at $x$, and
$W_x$ is the set of edges with at least one end vertex in $\mathcal C_{\sT}(x)$.
The occupancy of edges in $E_x$ is known. In particular, the graph induced by sets
of open edges in $E_x$ is $T_x$. The occupancy of edges in $U_x$ has not been checked.
The sets $E_x$ and $U_x$ are disjoint. Also note that, given the set of unexplored edges
$U_x$, the edges in $U_x$ are open independently of each other.
An example of an edge $\{a,b\}\in U_x$ is given in Figure \ref{fig-1-Ux} below.

\begin{figure}
\begin{center}
\vskip-1cm

\includegraphics[scale=0.70]{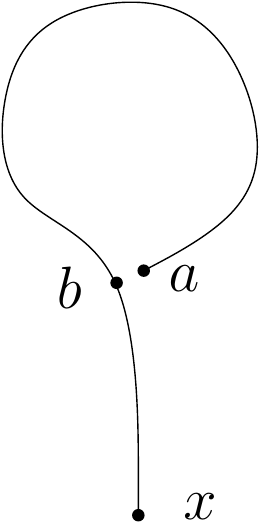}
\caption{Example of an edge $\{a,b\}\in U_x$.}
\label{fig-1-Ux}
\end{center}
\end{figure}

We proceed by describing the second stage of the algorithm.
The aim of this second stage is to select those surplus
edges $\{a,b\}$ that (i) close a long cycle; and (ii) are such that
$x\conn b$ is completely disjoint from the long cycle that is created
by the addition of the edge $\{a,b\}$; and (iii) there are no long cycles
precisely when all these selected edges are closed.

After $n$ steps, the algorithm produces
\begin{itemize}
 \item
the set of open explored edges $G_n$ and the open cluster induced by these edges, also denoted by $G_n$
(which will be a subgraph of $\mathcal C_{\sT}(x)$ without long cycles),
\item
the set of explored edges $F_n$ (which will be a subset of edges
of $U_x$, the occupancy of which we will check), and
\item
the set of special edges $Z_n$ (the algorithm will not check the occupancy
of these edges; each $e\in Z_n$ will have the property that
in the graph $G_n\cup\{e\}$, $e$ is in a long cycle $\pi$,
and there exists a path $\rho$ connecting one of the end-vertices of $e$ to $x$
which is edge disjoint of $\pi$).
\end{itemize}
Note that, according to the first stage of the algorithm,
each edge $e\in U_x$ can be written as $\{a,b\}$ such that the
unique path from $a$ to $x$ in the spanning-tree $T_x$
passes through $b$. (We prove this statement in Lemma~\ref{l:dftree:properties}(b)
at the end of the section.)

Denote by $B_x$ the set of end-vertices with this property, that is,
a vertex $b$ is in $B_x$ if and only if
there exists a vertex $a$ such that the edge $\{a,b\}$ is
in $U_x$ and the unique path from $a$ to $x$ in the spanning tree $T_x$
passes through $b$.

We enumerate the vertices of $B_x$ subject to the following restriction:
a vertex $b\in B_x$ receives a smaller number than $b'\in B_x$ if
the unique path from $b'$ to $x$ in the spanning tree $T_x$ passes through $b$.
This ordering of the vertices in $B_x$ can be better understood by introducing
an auxiliary abstract tree $\mathbf T_x$ rooted at $x$ with the vertex set
$\{x\}\cup B_x$ and the following set of oriented (away from the root) edges:\\
For $b,b'\in B_x$, there is an edge from $b$ to $b'$ in $\mathbf T_x$,
if the unique path from $b'$ to $x$ in the depth-first spanning tree $T_x$
passes through $b$, and the unique path between $b$ and $b'$ in $T_x$
does not contain any other vertices from $B_x$.
With this definition, we can alternatively say that a vertex
$b\in B_x$ has a smaller number than $b'\in B_x$ if
there is an oriented path from $b$ to $b'$ in $\mathbf T_x$.
In other words, we enumerate the vertices of $B_x$ according to
their distance to $x$ in the abstract tree $\mathbf T_x$.
An example of a collection of cycles and the corresponding tree $\mathbf T_x$
is given in Figure \ref{fig-2-Tx} below.

The second stage of the algorithm goes as follows.
Let $G_0 = T_x$, $F_0 = \varnothing$, $Z_0 = \varnothing$ and $B_0 = B_x$.
Assume that, for $n\geq 0$, the sets $G_n$, $F_n$, $Z_n$ and $B_n$ are defined.
If $B_n = \varnothing$, then we stop the algorithm and define
$G_x = G_n$, $F_x = F_n$ and $Z_x = Z_n$.
\begin{figure}
\begin{center}
\vskip-1cm

\includegraphics[scale=0.7]{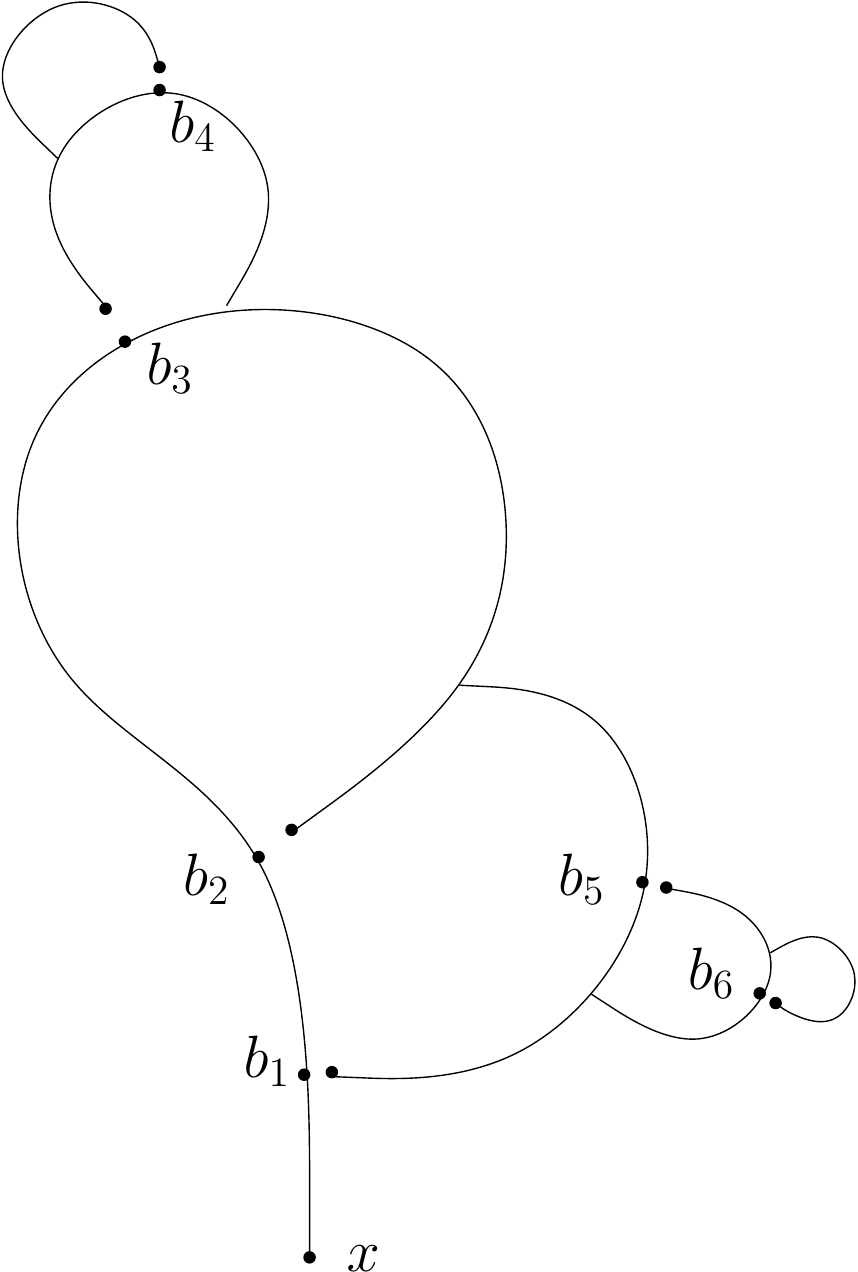}\hskip2cm
\includegraphics[scale=0.7]{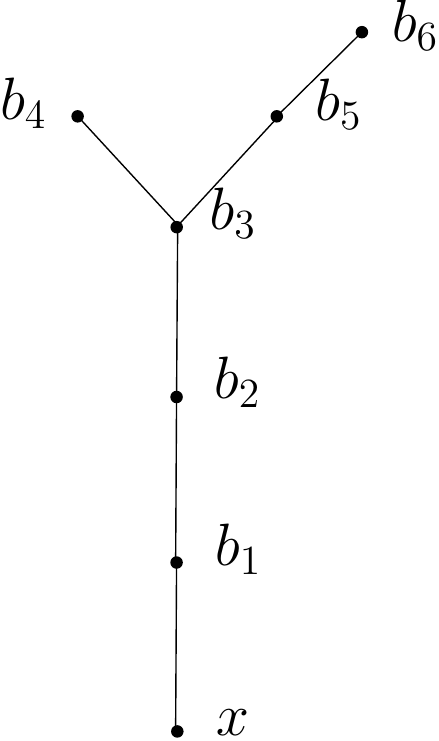}
\caption{Example of a collection of cycles and the corresponding tree $\mathbf T_x$.}
\label{fig-2-Tx}
\end{center}
\end{figure}

Otherwise, pick a vertex $b\in B_n$ with the biggest number.
We distinguish two cases:
\begin{enumerate}
 \item
If there are at least two vertices $a$ and $a'$ such that the edges $\{a,b\}$ and $\{a',b\}$ are in
$U_x\setminus (F_n\cup Z_n)$, then define $B_{n+1} = B_n$ and we select the admissible edge with the smallest number.
(This is the same numbering of the edges of the torus as in the first stage of the algorithm.)
\item
If the vertex $a$ such that the edge $\{a,b\}$ is in $U_x\setminus (F_n\cup Z_n)$ is
unique, then we define $B_{n+1} = B_n\setminus \{b\}$
and select this edge.
\end{enumerate}
Assume that the edge $e = \{a,b\}$ is selected.
\begin{enumerate}
 \item
If the graph $G_n\cup\{e\}$ {\it does not contain} a long cycle, then we define $F_{n+1} = F_n\cup\{e\}$ and $Z_{n+1} = Z_n$ and check the occupancy of $e$.
\begin{enumerate}
\item
If $e$ is open, then define $G_{n+1} = G_n\cup\{e\}$.
\item
If $e$ is closed, then define $G_{n+1} = G_n$.
\end{enumerate}
\item
If the graph $G_n\cup\{e\}$ {\it does contain} a long cycle, then we define $G_{n+1} = G_n$, $F_{n+1} = F_n$ and $Z_{n+1} = Z_n\cup\{e\}$ and {\it do not check} the occupancy of $e$.
Note that, by the special ordering of the vertices in $B_x$, every long cycle of the graph $G_n\cup\{e\}$ passes through $e$ and it is
{\it edge disjoint with the unique path from $b$ to $x$ in the tree $T_x$}.
\end{enumerate}
Since the number of edges of $\torus$ is finite, this stage of the algorithm will terminate at some step $N'<\infty$.
We then write $\mathcal A_x = \mathcal A_{N'}$ for all $\mathcal A\in \{G,F,Z\}$.
The sets $F_x$ and $Z_x$ are disjoint, and their union is $U_x$.
The occupancy of edges in $F_x$ is known. In particular, the graph induced by set of open edges in $E_x\cup F_x$ is $G_x$.
The occupancy of edges in $Z_x$ has not been checked.
In particular, given the set $Z_x$, the edges in $Z_x$ are open independently of each other.
By the definition of $Z_x$, any edge $e\in Z_x$ is in a long cycle in the graph $G_x\cup\{e\}$,
and every long cycle of the graph $G_x\cup\{e\}$ passes through $e$.
Moreover, by the special ordering of the vertices in $B_x$,
any edge $e\in Z_x$ can be written as $\{a,b\}$ so that the unique path
from $b$ to $x$ in the spanning tree $T_x$ is edge disjoint from some long
cycle of the graph $G_x\cup\{e\}$ (but not necessarily from all long cycles
of the graph $G_x\cup\{e\}$).

We run the above defined exploration algorithm for all the open clusters of the torus.
We pick a vertex $x_1$ uniformly on the torus and determine the depth-first spanning tree of the cluster of $x_1$ with root at $x_1$, $T_{x_1}$, the set of
explored edges $E_{x_1}\cup F_{x_1}$, and the set of special edges $Z_{x_1}$.
We then pick a vertex $x_2$ uniformly from the remaining vertices and
determine the depth-first spanning tree of $\mathcal C_{\sT}(x_2)$, $T_{x_2}$,
the set of explored edged $E_{x_2}\cup F_{x_2}$, and the set of special edges $Z_{x_2}$.
We then proceed similarly by selecting $x_3,\ldots, x_M$ and determining $T_{x_i}$, $E_{x_i}\cup F_{x_i}$ and $Z_{x_i}$. Here $M= M(\omega)$ is the number of open clusters in the realization $\omega$.

Given the sets of explored edges $E_{x_i}\cup F_{x_i}$, the number of long cycles
is defined by the status of the special edges $Z_{x_i}$.
In particular, if all the edges in $Z_{x_i}$ are closed, then
$\mathcal C_{\sT}(x_i)$ does \emph{not} contain long cycles.
Note that given the set of explored edges $E_{x_i}\cup F_{x_i}$, the event
that all the edges in $Z_{x_i}$ are closed has probability
    \[
     (1-p_c)^{|Z_{x_i}|} .\
    \]
Also, the size of a cluster is determined by the number of vertices in a spanning tree.
Therefore, (remember that $M$ is the number of open clusters in the torus)
    \begin{equation}\label{eq:YdeltaZxi}
    {\mathbb P}_{p_c}\left(Y_\delta = 0\right)
    =
    {\mathbb E}_{p_c} \left[(1-p_c)^{\sum_{i=1}^M |Z_{x_i}| I(|\mathcal C(x_i)| > \delta V^{2/3})}\right]
    \geq
    (1-p_c)^{{\mathbb E}_{p_c}\left[\sum_{i=1}^M |Z_{x_i}| I(|\mathcal C(x_i)| > \delta V^{2/3})\right]} .\
    \end{equation}
The last step follows from Jensen's inequality.

Let $\mathcal C_{\sss (1)},\ldots, \mathcal C_{\sss(M)}$ be all the clusters of the torus
sorted from the largest (in the number of vertices) to the smallest with ties broken in an arbitrary way.
We will show that
    \begin{equation}\label{eq:fromuniformtosorted}
    {\mathbb E}_{p_c}\left[\sum_{i=1}^M |Z_{x_i}| I(|\mathcal C_{\sT}(x_i)| > \delta V^{2/3})\right]
    =
    {\mathbb E}_{p_c}\left[\sum_{i=1}^M \sum_{x\in\mathcal C_{\sss (i)}}\frac{1}{|\mathcal C_{\sss (i)}|}|Z_{x}| I(|\mathcal C_{\sss(i)}| > \delta V^{2/3})\right] .\
    \end{equation}
Fix a percolation realization on the torus.
Remember the way we select the vertices $x_1,\ldots,x_M$:
select $x_1$ uniformly on the torus, select $x_2$ uniformly on
$\torus\setminus \mathcal C_{\sT}(x_1)$,
select $x_3$ uniformly on $\torus\setminus (\mathcal C_{\sT}(x_1)
\cup\mathcal C_{\sT}(x_2))$, and so on.
Given a percolation realization on the torus, we can
select the vertices $x_1,\ldots,x_M$ in two steps: first
select a permutation $\sigma$ of $\{1,\ldots,M\}$
(the distribution of $\sigma$ is irrelevant to us),
and then select $x_i$ uniformly from $\mathcal C_{\sss (\sigma(i))}$.
Note that the sum $\sum_{i=1}^M |Z_{x_i}| I(|\mathcal C_{\sT}(x_i)| > \delta V^{2/3})$
does not depend on $\sigma$, i.e., on the order in which we select clusters,
and only depends on which points in clusters we select as $x_i$'s.
This implies \eqref{eq:fromuniformtosorted}.

Note that
    \[
    {\mathbb E}_{p_c}\left[\sum_{i=1}^M \sum_{x\in\mathcal C_{\sss(i)}}\frac{1}{|\mathcal C_{\sss(i)}|}|Z_{x}| I(|\mathcal C_{\sss(i)}| > \delta V^{2/3})\right]
    \leq
    \frac{1}{\delta V^{2/3}}
    {\mathbb E}_{p_c}\left[\sum_{i=1}^M \sum_{x\in\mathcal C_{\sss(i)}}|Z_{x}|\right]
    =
    \frac{1}{\delta V^{2/3}} \sum_{x\in\torus} {\mathbb E}_{p_c} [|Z_x|] .\
    \]
Therefore, it follows from \eqref{eq:YdeltaZxi} and \eqref{eq:fromuniformtosorted} that
    \[
    {\mathbb P}_{p_c}\left(Y_\delta = 0\right)
    \geq
    (1-p_c)^{\delta^{-1}V^{1/3} {\mathbb E}_{p_c} [|Z_0|]} .\
    \]
Proposition~\ref{prop:Ydelta=0} follows once we show that
    \begin{equation}\label{eq:ineqforZ0}
    {\mathbb E}_{p_c} [|Z_0|]
    \leq
    C_{11} V^{-1/3} .\
    \end{equation}
Recall that $\mathcal I$ is the set of $z\in \mathcal C_{\sT}(0)$ such that
$\{0\leftrightarrow z\}\circ\{z\mbox { is in a long cycle}\}$, and let
$\mathcal E$ be the set of edges with at least one end-vertex in $\mathcal I$.
By the properties of $Z_0$, if $e\in Z_0$ is open, then $e\in \mathcal E$.
Therefore,
    \[
    {\mathbb E}_{p_c} [|Z_0|]
    =
    \frac{1}{p_c}\sum_e {\mathbb P}_{p_c}(e\in Z_0, e\mbox{ is open})
    \leq
    \frac{1}{p_c}{\mathbb E}_{p_c} [|\mathcal E|]
    \leq
    \frac{1}{p_c}2d{\mathbb E}_{p_c} [|\mathcal I|] .\
    \]
The claim \eqref{eq:ineqforZ0} now follows from Lemma~\ref{l:YIJ}.
This completes the proof of Proposition~\ref{prop:Ydelta=0}.
\end{proof}

In the remainder of this section, we prove some properties of
the exploration algorithm defined in the proof of Proposition~\ref{prop:Ydelta=0}.
Remember the notation used in the description of the algorithm.

\begin{lemma}[Structure depth-first tree]
\label{l:dftree:properties}
(a) For $n\geq 0$, let $\widetilde X_n$ be the set of vertices $a'\in X_n$ for
which there exists $b'\in\torus$ such that the edge $\{a',b'\} \notin W_n$.
For each $n\geq 0$, there exists a unique vertex $a\in \widetilde X_n$
which is the farthest from $x$ in the tree $T_{n}$, and all the other
vertices from $\widetilde X_n$ belong to the unique path from $a$ to the root $x$ in $T_n$.\\
(b) For all $e\in U_x$, there exist $a,b\in X_x$ such that $e = \{a,b\}$
and the unique path from $a$ to $x$ in the tree $T_x$ passes through $b$.
\end{lemma}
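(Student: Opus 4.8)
The plan is to prove both parts by tracking the \emph{active set} of the depth-first exploration, using the two standard structural features of DFS: the active set is always a simple path running from the current deepest explored vertex back to the root $x$, and every non-tree edge joins a vertex to one of its ancestors in the spanning tree. Throughout I will use that $T_n$ is always a tree with vertex set $X_n$ containing $x$ (each step either leaves $T_n$ unchanged or attaches a single new pendant vertex $b$ to a vertex $a\in X_n$), and that once a vertex enters the tree, the tree-path joining it to $x$ is never modified; consequently $T_n$ and $T_x$ induce the same path from any $a\in X_n$ to $x$.

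For part~(a) I would induct on $n$, restricting attention to those $n$ with $\widetilde X_n\neq\varnothing$ (this is exactly the condition under which the first stage of the algorithm has not yet stopped, so the statement is only meaningful there). The base case $n=0$ is immediate: $X_0=T_0=\{x\}$, so $\widetilde X_0=\{x\}$. For the inductive step, write $a$ for the unique deepest vertex of $\widetilde X_n$ and $P_n$ for the path from $a$ to $x$ in $T_n$, so $\widetilde X_n\subseteq P_n$ by hypothesis, and let $e=\{a,b\}$ be the edge selected at step $n+1$. A direct inspection of each case of the algorithm shows that $W_{n+1}=W_n\cup\{e\}$, so any $a'\in X_{n+1}$ with $a'\neq b$ lies in $X_n$ and, if it has an incident edge outside $W_{n+1}$, that edge is also outside $W_n$; hence $\widetilde X_{n+1}\subseteq\{b\}\cup\widetilde X_n$. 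Moreover the only case that produces a new vertex ($b\notin X_n$ and $e$ open) attaches $b$ to $a$, so the path from $b$ to $x$ in $T_{n+1}$ is $b$ followed by $P_n$; in the other cases $T_{n+1}=T_n$. Therefore $\widetilde X_{n+1}$ sits inside a single simple path of $T_{n+1}$ starting at the root $x$, and any non-empty subset of such a path has a unique deepest element, with every element of the subset lying on the path from that deepest element to $x$. This is the claim at step $n+1$.

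For part~(b) I would deduce it directly from~(a). Any $e\in U_x$ was placed into $U$ at some step $n+1$, necessarily through the ``surplus edge'' case, and at that moment $e=\{a,b\}$ with $a$ the (unique, by~(a)) deepest vertex of $\widetilde X_n$, with $b\in X_n$, and with $e\notin W_n$. Since $e$ is then an incident edge of $b$ lying outside $W_n$, we have $b\in\widetilde X_n$, so~(a) forces $b$ onto the path from $a$ to $x$ in $T_n$. As that tree-path is the same in $T_n$ and in $T_x$, the unique path from $a$ to $x$ in $T_x$ passes through $b$, which is the assertion (with the endpoints named as in the statement).

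I do not anticipate a genuine obstacle here: the content is entirely bookkeeping of the exploration. The one point needing a little care is the case analysis in~(a), specifically checking in the ``closed edge'' and ``surplus edge'' cases that even when the previously deepest active vertex $a$ leaves the active set, the new deepest vertex still dominates all of $\widetilde X_{n+1}$ — but this is automatic once one knows that $\widetilde X_{n+1}$ is contained in a simple path issuing from the root.
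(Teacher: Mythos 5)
Your proposal is correct and follows essentially the same route as the paper: induction on $n$ showing that $\widetilde X_n$ lies on the tree path from the current deepest active vertex to the root, and then deducing (b) by noting that when a surplus edge $\{a,b\}$ is set aside, $a$ is the deepest vertex of $\widetilde X_n$ and $b\in\widetilde X_n$. Your additional remark that tree paths to the root are never altered (so the path in $T_n$ agrees with that in $T_x$) just makes explicit a point the paper leaves implicit.
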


\begin{proof}
The proof of part (a) is by induction on $n$.
The result is obvious for $n=0$, since $\widetilde X_0 = \{x\}$.
Assume that the result holds for all $n'\leq n$.
Pick the unique vertex $a\in \widetilde X_n$ which is the farthest from $x$ in $T_{n}$.
Let $\{a,b\}\notin W_n$. (If there are several choices, then
we pick the smallest edge according to the numbering of the edges of the torus.)
If $\{a,b\}$ satisfies 1(a) of the first stage of the algorithm, then
$b\in \widetilde X_{n+1}$, and the unique path from $b$ to $x$ in $T_{n+1}$
contains $\widetilde X_n$, by the induction assumption.
If $\{a,b\}$ satisfies 1(b) or 2 of the first stage of the algorithm,
then either $\widetilde X_{n+1} = \widetilde X_n$ (if there is more
than one edge $\{a,b'\}\notin W_n$) or
$\widetilde X_{n+1} = \widetilde X_n\setminus\{a\}$ (if there is the unique edge
$\{a,b'\} \notin W_n$).
In both cases $\widetilde X_{n+1}$ satisfies the statement
in part (a) of the lemma, by the induction hypothesis.
This completes the proof of (a).\\
To prove part (b), let $e\in U_x$. There exists $n\geq 0$ such that $U_{n+1}\setminus U_n = \{e\}$.
By the definition of the algorithm, there exist $a$ and $b$ such that $e = \{a,b\}$ and
$a$ is the farthest vertex in $\widetilde X_n$ from $x$ in $T_n$.
Note that the edge $e$ satisfies condition 2 of the first stage of the algorithm.
In particular, $b\in X_n$ and $e\notin W_n$. Therefore, $b\in \widetilde X_n$.
The result in part (b) now follows from part (a) of the lemma.
\end{proof}

\section{Proof of Theorem~\ref{thm:ConnInsideBalls}}
\label{sec-pf-thmShortArms}

In this section, we restrict to percolation on $\mathbb Z^d$.
In particular, all the paths are assumed by default to be in $\mathbb Z^d$,
and we write here $\{x\leftrightarrow y\}$ for $\{x\leftrightarrow y~\mbox{in}~\mathbb Z^d\}$
and ${\mathcal C}(x)$ for ${\mathcal C}_{\sZ}(x)$.
This section is organized as follows. In Section \ref{secLemmaShortArms},
we start with some preparatory lemmas based on the techniques in
\cite{KN, KN(IIC)}.
We prove Theorem~\ref{thm:ConnInsideBalls}(a) in Section \ref{sec:ShortConn}, and
Theorem~\ref{thm:ConnInsideBalls}(b) in Section \ref{sec:ShortArms}.

\subsection{Preparatory lemmas}
\label{secLemmaShortArms}
The following lemma produces the factor $\sqrt{\varepsilon}$ that is present
in Theorem~\ref{thm:ConnInsideBalls}(b):

\begin{lemma}\label{lShortArms2}
There exists $C<\infty$ such that for any $\varepsilon > 0$,
positive integer $n$, and $x\in{\mathbb Z}^d$ with $|x| \geq n^2$,
    \[
    {\mathbb P}_{p_c}\big(0\stackrel{\leq \varepsilon n^2}\longleftrightarrow \partial \Qn{n}, 0\leftrightarrow x\big)
    \leq
    C\sqrt\varepsilon {\mathbb P}_{p_c}(0\leftrightarrow x).
    \]
\end{lemma}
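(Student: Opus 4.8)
The plan is to condition on the intrinsic ball $B_{\sZ,k}(0)$ of radius $k:=\lfloor\varepsilon n^2\rfloor$ around the origin, and to exploit that across its boundary the two events in the lemma decouple. First I would dispose of large $\varepsilon$: when $\varepsilon\geq 1/2$ the left-hand side is at most ${\mathbb P}_{p_c}(0\leftrightarrow x)\leq\sqrt 2\,\sqrt\varepsilon\,{\mathbb P}_{p_c}(0\leftrightarrow x)$, which already has the required form provided $C\geq\sqrt 2$. So I may assume $\varepsilon<1/2$, whence $k\leq\varepsilon n^2<n^2/2\leq|x|/2$; consequently every $v\in B_{\sZ,k}(0)\subseteq\Qn{k}$ has $|v-x|\geq|x|/2$, so that by \eqref{prop2pf} (the upper bound applied at $(v,x)$ and the lower bound at $(0,x)$) there is a constant $C_1=C_1(d)$ with $\tau_{\sZ,p_c}(v,x)\leq C_1\,\tau_{\sZ,p_c}(0,x)$ for all such $v$.

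Next I would run the standard breadth-first exploration of ${\mathcal C}(0)$ up to intrinsic distance $k$. This reveals the vertex sets $B_{\sZ,k}(0)$ and $\partial B_{\sZ,k}(0)$ together with the states of the set $\mathcal E$ of edges incident to $B_{\sZ,k-1}(0)$, while every edge outside $\mathcal E$ is still open with probability $p_c$ independently of the exploration. Observe that $\{0\stackrel{\leq k}\longleftrightarrow\partial\Qn{n}\}$ equals $\{B_{\sZ,k}(0)\cap\partial\Qn{n}\neq\varnothing\}$ and is measurable with respect to the exploration. Since $|x|>k$ we have $x\notin B_{\sZ,k}(0)$, so on $\{0\leftrightarrow x\}$ there exists $v\in\partial B_{\sZ,k}(0)$ joined to $x$ by a path using only edges outside $\mathcal E$: take the last vertex of a path from $0$ to $x$ that lies in $B_{\sZ,k}(0)$; it must in fact lie on $\partial B_{\sZ,k}(0)$ (a neighbour of a vertex of intrinsic distance $<k$ is again in $B_{\sZ,k}(0)$), and the remainder of the path stays outside $B_{\sZ,k}(0)$ and hence avoids $\mathcal E$. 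Conditioning on the exploration, bounding the conditional probability of such a connection by $\sum_{v\in\partial B_{\sZ,k}(0)}\tau_{\sZ,p_c}(v,x)$ (union bound and monotonicity), and using the previous paragraph,
\[
{\mathbb P}_{p_c}\big(0\stackrel{\leq k}\longleftrightarrow\partial\Qn{n},\ 0\leftrightarrow x\big)\ \leq\ C_1\,\tau_{\sZ,p_c}(0,x)\;{\mathbb E}_{p_c}\big[\,|\partial B_{\sZ,k}(0)|\;I\big(B_{\sZ,k}(0)\cap\partial\Qn{n}\neq\varnothing\big)\,\big].
\]

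Finally I would estimate the remaining expectation by the Cauchy--Schwarz inequality, bounding it by $\big({\mathbb E}_{p_c}|\partial B_{\sZ,k}(0)|^2\big)^{1/2}\big({\mathbb P}_{p_c}(B_{\sZ,k}(0)\cap\partial\Qn{n}\neq\varnothing)\big)^{1/2}$. Since $\{B_{\sZ,k}(0)\cap\partial\Qn{n}\neq\varnothing\}\subseteq\{0\leftrightarrow\partial\Qn{n}\}$, the second factor is at most $\sqrt{C/n^2}$ by \eqref{prop1Arm}. For the first factor I would invoke ${\mathbb E}_{p_c}|\partial B_{\sZ,k}(0)|^2\leq Ck$; this is the intrinsic-metric analogue of the tree-graph bound (together with the uniform bound ${\mathbb E}_{p_c}|\partial B_{\sZ,j}(0)|\leq C$), it follows from the methods of \cite{KN,KN(IIC)}, and it would be recorded as one of the preparatory lemmas of this section. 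Combining the two factors yields a bound of order $(\sqrt k/n)\,\tau_{\sZ,p_c}(0,x)\leq\sqrt\varepsilon\,\tau_{\sZ,p_c}(0,x)$ (using $k\leq\varepsilon n^2$), which is the assertion.

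The step I expect to require the most care is the exploration argument: one has to check that the two events genuinely decouple — that no edge used to connect $\partial B_{\sZ,k}(0)$ to $x$ has been examined while constructing the ball — and that $\partial B_{\sZ,k}(0)$, rather than some larger outer boundary, is the right interface on which to condition. The only other nonsoft ingredient is the second-moment estimate ${\mathbb E}_{p_c}|\partial B_{\sZ,k}(0)|^2\leq Ck$; everything else (the union bound, monotonicity, Cauchy--Schwarz, the one-arm bound \eqref{prop1Arm}, and the two-point bounds \eqref{prop2pf}) is routine.
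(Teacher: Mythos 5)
Your overall architecture is sound and is in fact the skeleton of the argument the paper relies on: the paper's own proof simply quotes \cite[Lemma~2.5]{KN(IIC)} (an estimate for events measurable with respect to the intrinsic ball) together with the one-arm bound \eqref{prop1Arm}, and your exploration/decoupling step, the comparison $\tau_{\sZ,p_c}(v,x)\leq C_1\tau_{\sZ,p_c}(0,x)$ via \eqref{prop2pf}, the Cauchy--Schwarz step and the use of \eqref{prop1Arm} all reproduce the mechanism behind that lemma correctly. The genuine gap is the ingredient you invoke at the end: ${\mathbb E}_{p_c}\big[|\partial B_{\sZ,k}(0)|^2\big]\leq Ck$. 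This per-level second moment is consistent with mean-field heuristics, but it is not among the results available in this paper or in \cite{KN,KN(IIC)}, and it does not follow by a routine tree-graph argument: the constraint that a vertex is at intrinsic distance \emph{exactly} $k$ is not a monotone event, so it cannot be fed into the BK inequality, and discarding it only yields ${\mathbb E}_{p_c}|B_{\sZ,k}(0)|^2\leq Ck^3$, which in your Cauchy--Schwarz step produces $k^{3/2}/n$ instead of $\sqrt{k}/n$ and loses a full factor of $k$. Asserting that this bound ``follows from the methods of \cite{KN,KN(IIC)}'' begs the question; indeed, the reason the argument in \cite{KN(IIC)} is organized the way it is, is precisely to avoid needing any second-moment control of a single level set $\partial B_{\sZ,k}(0)$.

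The standard repair keeps everything else you wrote and replaces the fixed radius $k$ by an averaged one. For every $j\in[k,2k]$ (say after disposing of $\varepsilon\geq 1/4$ trivially, so that $2k\leq |x|/2$), the event $E=\{0\stackrel{\leq \varepsilon n^2}\longleftrightarrow\partial\Qn{n}\}$ is still measurable with respect to the radius-$j$ exploration, and your decoupling gives ${\mathbb P}_{p_c}(E,\,0\leftrightarrow x)\leq C_1\tau_{\sZ,p_c}(0,x)\,{\mathbb E}_{p_c}\big[|\partial B_{\sZ,j}(0)|\,I(E)\big]$. Summing over $j\in[k,2k]$ and using $\sum_{j=k}^{2k}|\partial B_{\sZ,j}(0)|\leq |B_{\sZ,2k}(0)|$, Cauchy--Schwarz, the first-moment bound ${\mathbb E}_{p_c}|B_{\sZ,2k}(0)|\leq Ck$ (this is \cite[Theorem~1.2(i)]{KN(IIC)}, i.e.\ \eqref{KN-ball}, quoted externally so as not to be circular within the paper) and the tree-graph consequence ${\mathbb E}_{p_c}|B_{\sZ,2k}(0)|^2\leq Ck^3$, one finds some $j^*\in[k,2k]$ with ${\mathbb E}_{p_c}\big[|\partial B_{\sZ,j^*}(0)|\,I(E)\big]\leq C\sqrt{k\,{\mathbb P}_{p_c}(E)}\leq C\sqrt{\varepsilon}$ by \eqref{prop1Arm}, which yields the lemma. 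With that substitution your proof is complete; as written, the step ${\mathbb E}_{p_c}|\partial B_{\sZ,k}(0)|^2\leq Ck$ is an unproved (and not readily provable) assertion rather than a routine preparatory lemma.
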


\begin{proof}[Proof of Lemma~\ref{lShortArms2}]
This proof is a slight modification of the proof of \cite[Lemma~2.5]{KN(IIC)}.
The event $E = \{0\stackrel{\leq \varepsilon n^2}\longleftrightarrow \partial \Qn{n}\}$ is measurable with respect to
${\mathcal B}_I(\varepsilon n^2) = \{x\in\mathbb Z^d\colon0\stackrel{\leq \varepsilon n^2}\longleftrightarrow x\}$.
Therefore, \cite[Lemma~2.5]{KN(IIC)} implies that for any $x\in{\mathbb Z}^d$ with $|x|$ sufficiently large,
    \[
    {\mathbb P}_{p_c}\left(0\stackrel{\leq \varepsilon n^2}\longleftrightarrow \partial \Qn{n}, 0\leftrightarrow x\right)
    \leq
    C_1\sqrt{\varepsilon n^2{\mathbb P}_{p_c}(E)}{\mathbb P}_{p_c}(0\leftrightarrow x).
    \]
In fact, it follows from the proof of \cite[Lemma~2.5]{KN(IIC)} that the above inequality
holds for all $x\in{\mathbb Z}^d$ with $|x|\geq n^2$.
Finally, remember that ${\mathbb P}_{p_c}(E) \leq C_2n^{-2}$ by (\ref{prop1Arm}).
This completes the proof.
\end{proof}

It follows from Lemma~\ref{lShortArms2} and (\ref{propSizeCn}) that
    \begin{equation}
    \label{eqShortArms1}
    \sum_{x\in \Qn{2n^2}\setminus \Qn{n^2}}
    {\mathbb P}_{p_c}\left(0\stackrel{\leq \varepsilon n^2}\longleftrightarrow \partial \Qn{n}, 0\leftrightarrow x\right)
    \leq
    C_3 \sqrt\varepsilon n^4,
    \end{equation}
which shall be used crucially later on.

We next recall some notation from \cite{KN}.
Recall the definition of a $K$-regular vertex from \cite[Definition~4.1]{KN}:
For $A\subseteq \Z^d$, let ${\mathcal C}(y;A)$ be the set of vertices $z$
such that $y\leftrightarrow z~\mbox{in}~A$.
For $y\in\partial \Qn{n}$ and positive integers $s$ and $K$, 
we say that $y$ is $s$-{\it bad} if ${\mathcal C}(y;\Qn{n})$ satisfies
    \[
    {\mathbb P}_{p_c}\left(|{\mathcal C}(y)\cap \Qn{s}(y)| < s^4\log^7s~|~{\mathcal C}(y;\Qn{n})\right)
    \leq 1-\exp(-\log^2 s).
    \]
We further say that $y\in\partial \Qn{n}$ is $K$-{\it irregular} if 
there exists $s\geq K$ such that $y$ is $s$-bad. Otherwise we say that $y$ is $K$-{\it regular}.

We say that a pair of vertices $(x,y)$ is $(n,K,\varepsilon)$-admissible if the following conditions hold:
(a) $y\in\partial \Qn{n}$ and $x\in \Qn{2n^2}\setminus \Qn{n^2}$;
(b) $0\stackrel{\leq \varepsilon n^2}\longleftrightarrow y$ in $\Qn{n}$ and $y\leftrightarrow x$;
(c) $y$ is $K$-regular; and
(d) the edge $\{y,\widetilde y\}$ is pivotal for the event
$0\leftrightarrow x$, where
$\widetilde y$ is the neighbor of $y$ not in $\Qn{n}$
(if more than one exist, then we choose the first in lexicographical order).

We define $Y(n,K,\varepsilon)$ as the number of $(n,K,\varepsilon)$-admissible pairs.
The random variable $Y(n,K,\varepsilon)$ is very similar to $Y(j,K,L)$ defined in the proof of \cite[Lemma~5.1]{KN}.
\begin{remark}\label{rem:KN:L51}
Note that \cite[Lemma~5.1]{KN} holds for all $M\geq 1$ and not just for $M\geq L^2/2$ as it is stated.
Indeed, \cite[Lemma~5.1]{KN} follows directly from \cite[Lemmas 5.3 and 5.5]{KN},
both of which hold (and are stated) for all $M\geq 1$.
\end{remark}
\begin{lemma}\label{lnKe}
There exists a positive constant $C_4 = C_4(K)$ such that for $K$ sufficiently large,
any positive integer $n$ and $\varepsilon>0$,
    \[
    {\mathbb E}_{p_c}Y(n,K,\varepsilon)
    \geq C_4(K) n^4 {\mathbb E}_{p_c}|\{y\in\partial \Qn{n}\colon0\stackrel{\leq \varepsilon n^2}\longleftrightarrow y~\mbox{in}~\Qn{n}, y~\mbox{is}~K\mbox{-regular}\}|.
    \]
\end{lemma}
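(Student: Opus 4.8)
The plan is to fix a $K$-regular vertex $y\in\partial\Qn{n}$ with $0\stackrel{\leq\varepsilon n^2}\longleftrightarrow y$ in $\Qn{n}$ and estimate the conditional expected number of $x\in\Qn{2n^2}\setminus\Qn{n^2}$ that complete an admissible pair with $y$, exactly in the spirit of the proof of \cite[Lemma~5.1]{KN}. First I would condition on the cluster ${\mathcal C}(y;\Qn{n})$, together with the event that $0$ reaches $y$ inside $\Qn{n}$ using at most $\varepsilon n^2$ edges; these are events measurable with respect to the configuration of edges with both endpoints in $\Qn{n}$ (or with at least one endpoint in $\Qn{n}$, as in the pivotality convention). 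Given this, the edge $\{y,\widetilde y\}$ is present with probability $p_c$ independently, and on $\{\,\{y,\widetilde y\}\text{ open}\,\}$ the pair $(x,y)$ is admissible precisely when $\widetilde y$ is connected to $x$ using only edges disjoint from those already revealed and $\{y,\widetilde y\}$ is pivotal for $0\leftrightarrow x$ — which, by the standard pivotal decomposition, amounts to: $\widetilde y\leftrightarrow x$ off the revealed cluster, and the revealed part is not itself connected to $x$ avoiding $\{y,\widetilde y\}$. The point of $K$-regularity is that it forces the cluster of $y$ outside $\Qn{n}$ to be large with good probability, which is what makes many such $x$ available.

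The core estimate is therefore a lower bound, valid on the $K$-regular event, of the form
\[
{\mathbb E}_{p_c}\bigl[\#\{x\in\Qn{2n^2}\setminus\Qn{n^2}\colon (x,y)\text{ admissible}\}\;\big|\;{\mathcal C}(y;\Qn{n})\bigr]\ \geq\ C_4(K)\,n^4,
\]
which, after taking expectations and summing over $y\in\partial\Qn{n}$ and using linearity, yields the claimed inequality. To get this, I would first drop the pivotality refinement and the ``off the revealed cluster'' restriction at the cost of a constant factor: by the BK-type and tree-graph bounds (here \eqref{prop2pf} and the triangle condition \eqref{triangle-cond}, i.e.\ $\nabla(p_c)<\infty$), the contributions of the ``bad'' configurations where $x$ is also reached without the pivotal edge, or where the second connection is not disjoint, are dominated by a triangle-type sum that is a fixed fraction of the main term once $|x|$ is comparable to $n^2$. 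This is precisely the argument in \cite[Lemmas~5.3 and 5.5]{KN}, and Remark~\ref{rem:KN:L51} ensures it applies for all our scales (not just $M\geq L^2/2$). The main term is then
\[
p_c\sum_{x\in\Qn{2n^2}\setminus\Qn{n^2}}{\mathbb P}_{p_c}\bigl(\widetilde y\leftrightarrow x\text{ in the unrevealed edges}\bigr),
\]
and on the $K$-regular event the definition of $s$-good (for the relevant scale $s\asymp n$, say $s=n$, so that $s^4\log^7 s$ is of order $n^4$ up to logs, or one picks $s$ of a suitable polynomial order so the count is $\gtrsim n^4$) guarantees that $|{\mathcal C}(\widetilde y)\cap\Qn{s}(\widetilde y)|\geq s^4\log^7 s$ with probability at least $1-\exp(-\log^2 s)$, conditionally on ${\mathcal C}(y;\Qn{n})$; since $\Qn{s}(\widetilde y)\subseteq\Qn{2n^2}\setminus\Qn{n^2}$ for $s$ in the right range and $y\in\partial\Qn n$, this gives the lower bound of order $n^4$ (with the $\log$ factors absorbed into $C_4(K)$).

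The step I expect to be the main obstacle is the same one that is technical in \cite{KN}: making the ``unrevealed'' restriction rigorous, i.e.\ showing that requiring the connection from $\widetilde y$ to $x$ to avoid the already-explored edges (and the constraint that $\{y,\widetilde y\}$ genuinely be pivotal) only costs a constant factor. This requires the Factorization/Markov-type property of percolation clusters used in \cite{KN}, combined with the finiteness of the triangle diagram to control the overcounting, and some care that the scales match — in particular that the length constraint $0\stackrel{\leq\varepsilon n^2}\longleftrightarrow y$ on the \emph{inside} connection does not interfere, which it does not, since the outside connection from $\widetilde y$ to $x$ is unconstrained in length and the relevant volume lives in the annulus $\Qn{2n^2}\setminus\Qn{n^2}$. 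Modulo these bookkeeping points, which are handled verbatim as in \cite[Lemma~5.1]{KN} (and for which the only genuinely new input is Remark~\ref{rem:KN:L51} extending the validity to all scales), the proof follows.
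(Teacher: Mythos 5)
Your high-level plan --- repeat the proof of \cite[Lemma~5.1]{KN}, observe that the only change is that the inside event $\{0\stackrel{\leq \varepsilon n^2}\longleftrightarrow y~\mbox{in}~\Qn{n}\}$ is still determined by the revealed cluster, and invoke Remark~\ref{rem:KN:L51} to remove the scale restriction --- is exactly the paper's proof. But the mechanism you give for the core estimate is wrong on two counts, and as written it would not produce the factor $n^4$. First, you have the regularity definition backwards: $y$ is $s$-bad when the conditional probability, given ${\mathcal C}(y;\Qn{n})$, that $|{\mathcal C}(y)\cap \Qn{s}(y)|\geq s^4\log^7 s$ is at least $\e^{-\log^2 s}$; hence $K$-regularity says that for every $s\geq K$ this probability is \emph{smaller} than $\e^{-\log^2 s}$. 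It is an upper bound on the chance of a large cluster, whose role (in \cite[Lemmas~5.3 and 5.5]{KN}) is to control the interference terms coming from conditioning on the revealed cluster and from the pivotality requirement; it does not guarantee $|{\mathcal C}(\widetilde y)\cap\Qn{s}(\widetilde y)|\geq s^4\log^7 s$ with probability $1-\e^{-\log^2 s}$, so it cannot be used to manufacture admissible vertices. Second, the geometry is off: admissible $x$ must lie in $\Qn{2n^2}\setminus\Qn{n^2}$, whereas $\widetilde y$ is adjacent to $\partial\Qn{n}$, so no box $\Qn{s}(\widetilde y)$ is contained in that annulus (indeed $\Qn{n}(\widetilde y)\subseteq\Qn{2n+1}$ is disjoint from it for $n\geq 3$); the inclusion $\Qn{s}(\widetilde y)\subseteq\Qn{2n^2}\setminus\Qn{n^2}$ you invoke is false, and the vertices it would supply are not admissible.

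The factor $n^4$ actually comes from the main term you wrote down, $p_c\sum_{x\in\Qn{2n^2}\setminus\Qn{n^2}}{\mathbb P}_{p_c}(\widetilde y\leftrightarrow x)$, lower bounded via \eqref{prop2pf}: for such $x$ one has $|x-\widetilde y|\asymp n^2$, so the sum is of order $(n^2)^d\cdot(n^2)^{2-d}=n^4$; this is the analogue of the factor $L^2$ in \cite{KN} with $L\asymp n^2$. Regularity then enters only through the KN lemmas, to show that conditioning on the revealed cluster and forcing the outside connection to avoid it (so that $\{y,\widetilde y\}$ is pivotal) costs at most a constant factor depending on $K$. Since the paper's argument consists precisely of checking that those lemmas use nothing about the conditioning event beyond its measurability with respect to the edges of ${\mathcal C}(0;\Qn{n})$ --- which your measurability observation does cover --- your proposal becomes correct once you replace the paragraph deriving $n^4$ from the regularity definition at scale $s\asymp n$ by the two-point-function computation over the annulus at scale $n^2$.
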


\begin{proof}
A word for word repetition of the proof of \cite[Lemma~5.1]{KN} (taking into account
Remark~\ref{rem:KN:L51}) gives Lemma \ref{lnKe}.
Indeed, with the notation of \cite{KN},
the only difference in the proofs arises in the proof (and the statement) of \cite[Lemma~5.3]{KN}, where instead of the event
    \[
    \mathcal E_1 = \{0\leftrightarrow y \mbox{ in }\Qn{j}, y\mbox{ is }K-\mbox{regular and }X^{K-reg}_j = M\} ,\
    \]
we use the event
    \[
    \widetilde{\mathcal E_1} = \{0\stackrel{\leq \varepsilon j^2}\longleftrightarrow y \mbox{ in }\Qn{j}, y\mbox{ is }K-\mbox{regular and }X^{K-reg}_j = M\} .\
    \]
However, the event $\widetilde{\mathcal E_1}$ still can be determined by observing only the edges of $\mathcal C(0;\Qn{j})$.
Therefore, the proof of Lemma~5.3 in \cite{KN} remains unchanged if we replace the event $\mathcal E_1$ with the event $\widetilde{\mathcal E_1}$.
The proof of Lemma~5.5 in \cite{KN} also requires only that the event $\mathcal E_1$ must be determined by observing only the edges of $\mathcal C(0;\Qn{j})$,
and therefore also holds with $\mathcal E_1$ replaced with $\widetilde{\mathcal E_1}$.
\end{proof}

Note that for every $x\in \Qn{2n^2}\setminus \Qn{n^2}$ there exists at most one $y\in\partial \Qn{n}$ such that the pair of vertices $(x,y)$ is $(n,K,\varepsilon)$-admissible.
Therefore,
    \[
    {\mathbb E}_{p_c}Y(n,K,\varepsilon)
    =
    \sum_{x\in \Qn{2n^2}\setminus \Qn{n^2}}
    {\mathbb P}_{p_c}\Big(\exists y\in\partial \Qn{n}\colon(x,y)~\mbox{is}~(n,K,\varepsilon)\mbox{-admissible}\Big).
    \]
If $(x,y)$ is $(n,K,\varepsilon)$-admissible,
then $\{0\stackrel{\leq \varepsilon n^2}\longleftrightarrow y~\mbox{in}~\Qn{n}\}$ and $\{0\leftrightarrow x\}$
both occur.
We use this observation to bound the expected number of $(n,K,\varepsilon)$-admissible pairs from above by
    \[
    \sum_{x\in \Qn{2n^2}\setminus \Qn{n^2}}
    {\mathbb P}_{p_c}\left(\exists y\in\partial \Qn{n}\colon0\stackrel{\leq \varepsilon n^2}\longleftrightarrow y~\mbox{in}~\Qn{n}, 0\leftrightarrow x\right)
    =
    \sum_{x\in \Qn{2n^2}\setminus \Qn{n^2}}
    {\mathbb P}_{p_c}\left(0\stackrel{\leq \varepsilon n^2}\longleftrightarrow \partial \Qn{n}, 0\leftrightarrow x\right).
    \]
We can now combine these bounds with the results of (\ref{eqShortArms1}) and Lemma~\ref{lnKe} to get
    \begin{equation}\label{eqYUpperBound}
    {\mathbb E}_{p_c}\Big|\{y\in\partial \Qn{n}\colon0\stackrel{\leq \varepsilon n^2}\longleftrightarrow y~\mbox{in}~\Qn{n}, y~\mbox{is}~K\mbox{-regular}\}\Big|
    \leq
    \frac{C_3}{C_4(K)}\sqrt\varepsilon
    \end{equation}
for all large enough $K$, positive integers $n$ and for all $\varepsilon>0$ with
the constants $C_3$ from (\ref{eqShortArms1}) (not depending on $K$, $n$ and $\varepsilon$) and $C_4(K)$ from Lemma~\ref{lnKe} (not depending on $n$ and $\varepsilon$). We next investigate the contribution from $K$-irregular $y$'s:

\begin{lemma}
\label{lShortArms3}
For all large enough $K$ and for all $\varepsilon>0$,
    \[
    \limsup_{n\to\infty}\sum_{y\in\partial \Qn{n}}{\mathbb P}_{p_c}\left(0\stackrel{\leq \varepsilon n^2}\longleftrightarrow y~\mbox{in}~\Qn{n}, y\mbox{ is }K\mbox{-irregular}\right)
    \leq \frac{1}{2}\limsup_{n\to\infty}\sum_{y\in\partial \Qn{n}}{\mathbb P}_{p_c}\left(0\stackrel{\leq 2\varepsilon n^2}\longleftrightarrow y~\mbox{in}~\Qn{n}\right).
    \]
\end{lemma}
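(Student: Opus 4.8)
The plan is to decompose the left-hand side according to the scale $s\ge K$ at which $y$ is bad, to trade $s$-badness for the presence of an atypically large cluster near $y$ by an unconditioning step whose cost is $\e^{\log^2 s}$, and then to sum over $y\in\partial\Qn{n}$ and over $s\ge K$, the point being that the cutoff $s^4\log^7 s$ in the definition of badness makes the per-scale contribution summable once $K$ is large.

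First I would use the definition of $K$-irregularity together with a union bound over scales to write
\[
\sum_{y\in\partial \Qn{n}}\PP_{p_c}\big(0\stackrel{\leq \varepsilon n^2}{\conn} y\text{ in }\Qn{n},\ y\text{ is }K\text{-irregular}\big)
\ \le\ \sum_{s\ge K}\ \sum_{y\in\partial \Qn{n}}\PP_{p_c}\big(0\stackrel{\leq \varepsilon n^2}{\conn} y\text{ in }\Qn{n},\ y\text{ is }s\text{-bad}\big).
\]
For a fixed scale $s$, both $\{0\stackrel{\leq \varepsilon n^2}{\conn} y\text{ in }\Qn{n}\}$ and $\{y\text{ is }s\text{-bad}\}$ are measurable with respect to ${\mathcal C}(y;\Qn{n})$ together with its open edges, and on $\{y\text{ is }s\text{-bad}\}$ one has $\PP_{p_c}\big(|{\mathcal C}(y)\cap \Qn{s}(y)|\ge s^4\log^7 s\,\big|\,{\mathcal C}(y;\Qn{n})\big)\ge\e^{-\log^2 s}$ by definition. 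Bounding the indicator of $s$-badness by $\e^{\log^2 s}$ times this conditional probability, taking expectations, and using the ${\mathcal C}(y;\Qn{n})$-measurability of the connection event, I get
\[
\PP_{p_c}\big(0\stackrel{\leq \varepsilon n^2}{\conn} y\text{ in }\Qn{n},\ y\text{ is }s\text{-bad}\big)
\ \le\ \e^{\log^2 s}\,\PP_{p_c}\big(0\stackrel{\leq \varepsilon n^2}{\conn} y\text{ in }\Qn{n},\ |{\mathcal C}(y)\cap \Qn{s}(y)|\ge s^4\log^7 s\big).
\]

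The heart of the proof is to bound $\sum_{y\in\partial\Qn{n}}\PP_{p_c}\big(0\stackrel{\leq \varepsilon n^2}{\conn} y\text{ in }\Qn{n},\,|{\mathcal C}(y)\cap\Qn{s}(y)|\ge s^4\log^7 s\big)$ by $\gamma_s\sum_{y\in\partial\Qn{n}}\PP_{p_c}\big(0\stackrel{\leq 2\varepsilon n^2}{\conn} y\text{ in }\Qn{n}\big)$ up to an error that vanishes as $n\to\infty$, where $\gamma_s$ is small enough that $\sum_{s\ge K}\e^{\log^2 s}\gamma_s\to 0$ as $K\to\infty$. On the event in question ${\mathcal C}(y)={\mathcal C}(0)$, the origin reaches $y\in\partial\Qn{n}$ within $\varepsilon n^2$ steps inside $\Qn{n}$, and ${\mathcal C}(0)$ has at least $s^4\log^7 s$ vertices in $\Qn{s}(y)$, all in the thin annulus $\{z\colon ||z|-n|\le s\}$; those cluster vertices at intrinsic distance $\le\varepsilon n^2$ from $y$ are also reached from the origin within $2\varepsilon n^2$ steps. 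I would then apply a first-moment bound in the number of these cluster vertices (dividing by $s^4\log^7 s$) after splitting the connection $0\conn y$ at its first visit to $\Qn{s}(y)$ by the BK inequality, and control the resulting double sum over the annulus using $\sum_{v\in\Qn{s}}\tau_{\sZ,p_c}(v)\le Cs^2$ from \eqref{propSizeCn}, the intrinsic one-arm bound \eqref{KN-intrinsic} and \eqref{KN-ball} (to discard the cluster vertices at intrinsic distance $>\varepsilon n^2$ from $y$), and the diagram bounds of Theorem~\ref{thmHighD}(iii)--(iv). Combining the three displays, taking $\limsup_{n\to\infty}$ so the error terms disappear, and choosing $K$ so large that $\sum_{s\ge K}\e^{\log^2 s}\gamma_s\le\tfrac12$ then gives the lemma.

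The main obstacle is precisely this last estimate: producing a per-scale factor $\gamma_s$ for which $\sum_{s\ge K}\e^{\log^2 s}\gamma_s$ is both finite and tends to $0$ with $K$. Since the cost $\e^{\log^2 s}$ of the unconditioning step grows faster than any power of $s$, the cutoff $s^4\log^7 s$ and the intrinsic one-arm estimate must be leveraged carefully to recover enough decay per scale — this is where the regularity machinery of \cite{KN} is reused. A secondary point needing care is that $\Qn{s}(y)$ protrudes from $\Qn{n}$ when $y\in\partial\Qn{n}$, so the cluster vertices produced above need not be reached from the origin inside $\Qn{n}$; one absorbs this by landing on the sum over $\partial\Qn{n'}$ with $n'$ larger than $n$ by $O(s)$ and using that, for each fixed $s$, the $\limsup$ over $n$ is unchanged under such a shift.
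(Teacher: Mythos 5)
Your overall skeleton (decompose over the bad scale $s\ge K$, remove the conditional-probability definition of badness at cost $\e^{\log^2 s}$, and then produce a summable per-scale factor) runs into a genuine quantitative obstruction at exactly the step you flag as ``the main obstacle'', and the tool you propose for that step cannot deliver it. After the unconditioning, you need
\[
\sum_{y\in\partial \Qn{n}}{\mathbb P}_{p_c}\Big(0\stackrel{\leq \varepsilon n^2}\longleftrightarrow y~\mbox{in}~\Qn{n},\ |{\mathcal C}(y)\cap \Qn{s}(y)|\geq s^4\log^7 s\Big)
\leq \gamma_s \sum_{y\in\partial \Qn{n}}{\mathbb P}_{p_c}\Big(0\stackrel{\leq 2\varepsilon n^2}\longleftrightarrow y~\mbox{in}~\Qn{n}\Big)+o_n(1)
\]
with $\sum_{s\geq K}\e^{\log^2 s}\gamma_s\to 0$ as $K\to\infty$, i.e.\ $\gamma_s$ must decay faster than $\e^{-\log^2 s}$, hence faster than any power of $s$. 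But the estimate you propose --- Markov's inequality at threshold $s^4\log^7 s$ applied to the number of cluster vertices in $\Qn{s}(y)$, followed by BK and the diagrammatic bounds \eqref{propSizeCn}, \eqref{KN-intrinsic}, \eqref{KN-ball} --- can only produce $\gamma_s$ of order (first moment)/(threshold) $\asymp s^2/(s^4\log^7 s)=s^{-2}\log^{-7}s$, a \emph{polynomial} rate. Then $\sum_{s\geq K}\e^{\log^2 s}\,s^{-2}\log^{-7}s=\infty$, so the scales cannot be resummed and the argument does not close. To beat $\e^{\log^2 s}$ one needs a large-deviation-type bound on the cluster volume in a box (probability roughly $\exp(-c\log^{4} s)$ or better), which is not accessible by a first-moment/BK computation; this is precisely the content of the regularity estimates of \cite{KN}, and invoking them only as ``machinery to be reused'' leaves the key step unproved.

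The paper takes a different route that avoids the $\e^{\log^2 s}$ cost altogether: by \cite[Claim~4.2]{KN}, a $K$-irregular $y$ is $s$-\emph{locally} bad for some $s\ge K$, where ``locally bad'' is an event determined by the edges in $\Qn{s^{4d^2}}(y)\cap\Qn{n}$ and has probability at most $C\exp(-c\log^4 s)$ by \cite[Lemma~4.3]{KN}. For $2d(s^{4d^2})^d<n$ one stops the connection event at $\Qn{s^{4d^2}}(y)$, uses independence of the two events (disjoint edge sets), and re-extends the path to $y$ at multiplicative cost $C\exp(C\log^2 s)$ via \cite[Lemma~1.1]{KN} and FKG, the length budget increasing only to $2\varepsilon n^2$ because the local box contains at most $\varepsilon n^2$ edges; since $\exp(-c\log^4 s)$ dominates $\exp(C\log^2 s)$, the sum over $s\ge K$ is small for $K$ large. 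The remaining scales $2d(s^{4d^2})^d\ge n$ (which also subsume your boundary-protrusion concern) are handled by a crude union bound contributing $C\exp(-c\log^4 n)=o_n(1)$. If you want to keep your decomposition over $s$-bad (rather than $s$-locally bad) scales, you would have to reprove a super-polynomial volume estimate of this strength jointly with the connection event, which is substantially more than the first-moment argument you outline.
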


\begin{proof}[Proof of Lemma~\ref{lShortArms3}]
Recall the definition of an $s$-locally bad vertex from \cite[Definition~4.3]{KN}:
Let $\mathcal T^{loc}_s(y)$ be the event that
(a) for all $z\in \Qn{s}(y)$, $|{\mathcal C}(z;\Qn{s^{2d}}(y))\cap \Qn{s}(y)| < s^4\log^4 s$; and
(b) there exist at most $\log^3 s$ disjoint open paths starting in $\Qn{s}(y)$ and ending at $\partial \Qn{s^{2d}}(y)$.
For $y\in\partial \Qn{n}$ and positive integers $s$ and $K$,
we say that a cluster ${\mathcal C}$ in $\Qn{s^{4d^2}}(y)\cap \Qn{n}$ is a ``spanning cluster'' if 
(a) ${\mathcal C}\cap \Qn{n}$ intersects both $\partial \Qn{s^{4d^2}}(y)$ and $\Qn{s^{2d}}(y)$, or
(b) ${\mathcal C} = {\mathcal C}(y)$.
We say that $y$ is $s$-{\it locally bad} if there exist spanning clusters 
${\mathcal C}_1,\ldots,{\mathcal C}_m$ in $\Qn{s^{4d^2}}(y)\cap \Qn{n}$ such that
    \[
    {\mathbb P}_{p_c}\left(\mathcal T^{loc}_s(y)~|~{\mathcal C}_1,\ldots,{\mathcal C}_m\right) \leq 1 - \exp(-\log^2 s).
    \]
Note that the event that $y$ is $s$-locally bad is determined by the status of the edges in the box $\Qn{s^{4d^2}}(y)\cap \Qn{n}$.
Moreover, it follows from \cite[Claim~4.2]{KN} that if $y$ is not $K$-regular, 
then there exists $s\geq K$ such that $y$ is $s$-locally bad.
Therefore, we need to bound from above the probabilities
\[
{\mathbb P}_{p_c}\left(0\stackrel{\leq \varepsilon n^2}\longleftrightarrow y~\mbox{in}~\Qn{n}, y~\mbox{is}~s\mbox{-locally bad}\right)
\]
for $y\in\partial \Qn{n}$, $s\geq K$ and large enough $n$.

Since we are only interested in large $n$, we may assume that $n\varepsilon > 1$.
We consider two different cases: $2d(s^{4d^2})^d < n$ and $2d(s^{4d^2})^d \geq n$.
We start with the case $2d(s^{4d^2})^d < n$.
Note that in this case the ball $\Qn{s^{4d^2}}$ contains at most $\varepsilon n^2$ edges.
We bound the sum
    \[
    \sum_{y\in\partial \Qn{n}}
    {\mathbb P}_{p_c}\left(0\stackrel{\leq \varepsilon n^2}\longleftrightarrow y~\mbox{in}~\Qn{n}, y~\mbox{is}~s\mbox{-locally bad}\right)
    \]
from above by
    \[
    \sum_{y\in\partial \Qn{n}}
    {\mathbb P}_{p_c}\left(0\stackrel{\leq \varepsilon n^2}\longleftrightarrow \Qn{s^{4d^2}}(y)~\mbox{in}~\Qn{n}, y~\mbox{is}~s\mbox{-locally bad}\right).
    \]
Since the events $\{0\stackrel{\leq \varepsilon n^2}\longleftrightarrow \Qn{s^{4d^2}}(y)~\mbox{in}~\Qn{n}\}$ and
$\{y~\mbox{is}~s\mbox{-locally bad}\}$ depend on the states of edges in disjoint subsets of $\mathbb Z^d$, they are independent.
In particular, the above sum equals
    \[
    \sum_{y\in\partial \Qn{n}}
    {\mathbb P}_{p_c}\left(0\stackrel{\leq \varepsilon n^2}\longleftrightarrow \Qn{s^{4d^2}}(y)~\mbox{in}~\Qn{n}\right)
    {\mathbb P}_{p_c}\left(y~\mbox{is}~s\mbox{-locally bad}\right).
    \]
By \cite[Lemma~1.1]{KN} and the FKG inequality (see, e.g., \cite[Theorem~2.4]{Grimmett}),
there exist a positive constant $C_5$ and a finite constant $C_6$ such that for all $m$ and $z,z'\in\partial \Qn{m}$,
    \[
    {\mathbb P}_{p_c}(z\leftrightarrow z'~\mbox{in}~\Qn{m}) \geq C_5\exp\left(-C_6\log^2 m\right).
    \]
We apply this result to ``extend'' the path
$0\stackrel{\leq \varepsilon n^2}\longleftrightarrow \Qn{s^{4d^2}}(y)~\mbox{in}~\Qn{n}$ to a path
$0\stackrel{\leq 2\varepsilon n^2}\longleftrightarrow y~\mbox{in}~\Qn{n}$:
    \[
    {\mathbb P}_{p_c}\left(0\stackrel{\leq \varepsilon n^2}\longleftrightarrow \Qn{s^{4d^2}}(y)~\mbox{in}~\Qn{n}\right)
    \leq C_7 \exp\left(C_8\log^2 s\right){\mathbb P}_{p_c}\left(0\stackrel{\leq 2\varepsilon n^2}\longleftrightarrow y~\mbox{in}~\Qn{n}\right).
    \]
Here we also use the fact that the number of edges in $\Qn{s^{4d^2}}(y)$ is at most $\varepsilon n^2$,
which implies that if two vertices $z$ and $z'$ in $\Qn{s^{4d^2}}(y)$ are connected by an open path in $\Qn{s^{4d^2}}(y)$
then the length of this path is at most $\varepsilon n^2$.

It follows from \cite[Lemma~4.3]{KN} that
    \[
    {\mathbb P}_{p_c}\left(y~\mbox{is}~s\mbox{-locally bad}\right)
    \leq C_9 \exp\left(-C_{10}\log^4 s\right).
    \]
We now put these bounds together. Let ${\sum}'$ be the sum over all $s$ such that $s\geq K$ and $2d(s^{4d^2})^d < n$. We obtain that
    \[
    {\sum}'
    \sum_{y\in\partial \Qn{n}}
    {\mathbb P}_{p_c}\left(0\stackrel{\leq \varepsilon n^2}\longleftrightarrow y~\mbox{in}~\Qn{n}, y~\mbox{is}~s\mbox{-locally bad}\right)
    \]
is bounded from above by
    \[
    C_{11}\exp\left(-C_{12}\log^4 K\right)
    \sum_{y\in\partial \Qn{n}}
    {\mathbb P}_{p_c}(0\stackrel{\leq 2\varepsilon n^2}\longleftrightarrow y~\mbox{in}~\Qn{n}),
    \]
where the constants $C_{11}$ and $C_{12}$ do not depend on $n$, $\varepsilon$ or $K$.

We now consider the case $2d(s^{4d^2})^d \geq n$. Let $s_0 = (n/2d)^{1/(4d^3)}$.
For $s\geq s_0$, we simply bound
    \[
    \sum_{s=s_0}^\infty\sum_{y\in\partial \Qn{n}}
    {\mathbb P}_{p_c}\left(0\stackrel{\leq \varepsilon n^2}\longleftrightarrow y~\mbox{in}~\Qn{n}, y~\mbox{is}~s\mbox{-locally bad}\right)
    \leq
    |\partial \Qn{n}| \sum_{s=s_0}^\infty \sup_{y\in\partial \Qn{n}}{\mathbb P}_{p_c}\left(y~\mbox{is}~s\mbox{-locally bad}\right).
    \]
We again use \cite[Lemma~4.3]{KN} to bound the above expression by
    \[
    |\partial \Qn{n}| \sum_{s=s_0}^\infty C_9 \exp\left(-C_{10}\log^4 s\right)
    \leq C_{13} \exp\left(-C_{14}\log^4 n\right),
    \]
since $s_0 = (n/2d)^{1/(4d^3)}$, and
where the constants $C_{13}$ and $C_{14}$ do not depend
on $n$, $K$, or $\varepsilon$.

We take $K$ so large that $C_{11}\exp\left(-C_{12}\log^4 K\right) < 1/2$.
Remember \cite[Claim~4.2]{KN} which states that if $y$ is $K$-irregular, then
there exists $s\geq K$ such that $y$ is $s$-locally bad.
Therefore, for such choice of $K$, the sum
    \begin{equation}
    \sum_{y\in\partial \Qn{n}}
    {\mathbb P}_{p_c}\left(0\stackrel{\leq \varepsilon n^2}\longleftrightarrow y~\mbox{in}~\Qn{n}, y~\mbox{is}~K\mbox{-irregular}\right)
   \leq
	\label{eqShortArms3}
    \frac{1}{2}\sum_{y\in\partial \Qn{n}}
    {\mathbb P}_{p_c}\left(0\stackrel{\leq 2\varepsilon n^2}\longleftrightarrow y~\mbox{in}~\Qn{n}\right)
    +
    C_{13} \exp\left(-C_{14}\log^4 n\right).
    \end{equation}
The result of Lemma~\ref{lShortArms3} follows.
\end{proof}

\subsection{Proof of Theorem~\ref{thm:ConnInsideBalls}(a)}
\label{sec:ShortConn}

The proof of Theorem~\ref{thm:ConnInsideBalls}(a) is similar to the proof of Lemma~\ref{lShortArms3}, but easier.
We refer the reader to Section~\ref{secLemmaShortArms} for definitions and notation.
Remember the definition of a $K$-regular vertex from Section~\ref{secLemmaShortArms}.
Let $X_n^{K-reg}$ be the number of $K$-regular vertices on the boundary of $\Qn{n}$
connected to the origin by an open path in $\Qn{n}$.

Let $Y(n,K,L)$ be the random variable defined in the proof of \cite[Theorem~2]{KN}:
We say that a pair of vertices $(x,y)$ are $(n,K,L)$-admissible if the following conditions hold:
(a) $y\in\partial \Qn{n}$ and $x\in \Qn{L}(y)$;
(b) $0\leftrightarrow y$ in $\Qn{n}$ and $y\leftrightarrow x$;
(c) $y$ is $K$-regular; and
(d) the edge $\{ y,\widetilde y\}$ is pivotal for the event
$0\leftrightarrow x$, where $\widetilde y$ is the neighbor of $y$ not in
$\Qn{n}$ (if more than one exist, we choose the first in lexicographical order).
We define $Y(n,K,L)$ as the number of $(n,K,L)$-admissible pairs.

It follows from \cite[Lemma~5.1]{KN} and Remark~\ref{rem:KN:L51} that there exists $C_{15} = C_{15}(K)$ such that
for all large enough $K$ and for all $n$ and $L$,
    \begin{equation}
    \label{EYnKL-bd}
    {\mathbb E}_{p_c}Y(n,K,L) \geq C_{15}(K) L^2 {\mathbb E}_{p_c} X_n^{K-reg}.
    \end{equation}

\begin{lemma}
\label{lem-X-reg-lb}
For all large enough $K$ and for all $n$,
    \[
    {\mathbb E}_{p_c} X_n^{K-reg} \geq \frac{1}{3}\sum_{y\in\partial \Qn{n}}{\mathbb P}_{p_c}(0\leftrightarrow y~\mbox{in}~\Qn{n}).
    \]
\end{lemma}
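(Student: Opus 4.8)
The plan is to follow the proof of Lemma~\ref{lShortArms3}, which simplifies considerably here since there is no constraint on path lengths. Write $S_n=\sum_{y\in\partial \Qn{n}}{\mathbb P}_{p_c}(0\leftrightarrow y~\mbox{in}~\Qn{n})$. Since ${\mathbb E}_{p_c}X_n^{K-reg}=\sum_{y\in\partial \Qn{n}}{\mathbb P}_{p_c}(0\leftrightarrow y~\mbox{in}~\Qn{n},\ y~\mbox{is}~K\mbox{-regular})$, it suffices to show that the complementary sum, over $K$-irregular $y$, is at most $\frac23 S_n$ once $K$ is large enough. By \cite[Claim~4.2]{KN}, if $y$ is not $K$-regular then $y$ is $s$-locally bad for some $s\ge K$, so after a union bound it is enough to control $\sum_{y\in\partial \Qn{n}}{\mathbb P}_{p_c}(0\leftrightarrow y~\mbox{in}~\Qn{n},\ y~\mbox{is}~s\mbox{-locally bad})$ summed over $s\ge K$, and I would split this according to whether $2d(s^{4d^2})^d < n$ or not, exactly as in Lemma~\ref{lShortArms3}.

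For $s$ with $2d(s^{4d^2})^d < n$, I would first enlarge $\{0\leftrightarrow y~\mbox{in}~\Qn{n}\}$ to the event that $0$ is joined to $\partial\Qn{s^{4d^2}}(y)$ using only edges of $\Qn{n}$ lying outside $\Qn{s^{4d^2}}(y)$. This event is independent of $\{y~\mbox{is}~s\mbox{-locally bad}\}$, which is measurable with respect to the edges inside $\Qn{s^{4d^2}}(y)\cap\Qn{n}$, so the joint probability factorizes. Then I would bound ${\mathbb P}_{p_c}(y~\mbox{is}~s\mbox{-locally bad})\le C\exp(-C\log^4 s)$ via \cite[Lemma~4.3]{KN}, and use \cite[Lemma~1.1]{KN} together with the FKG inequality to ``close up'' the connection, turning the connection to $\partial\Qn{s^{4d^2}}(y)$ back into a connection to $y$ at the cost of a factor $C\exp(C\log^2 s)$. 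Summing the resulting bound $C\exp(C\log^2 s-C\log^4 s)\,{\mathbb P}_{p_c}(0\leftrightarrow y~\mbox{in}~\Qn{n})$ over $y$ and over $s\ge K$ produces a total at most $C\exp(-C\log^4 K)\,S_n$, which is $\le\frac13 S_n$ for $K$ large. For $s$ with $2d(s^{4d^2})^d\ge n$, that is $s\ge s_0:=(n/2d)^{1/(4d^3)}$, I would simply discard the connection event and bound ${\mathbb P}_{p_c}(y~\mbox{is}~s\mbox{-locally bad})\le C\exp(-C\log^4 s)$, summing over $y\in\partial\Qn{n}$ and $s\ge s_0$ to obtain an error term of size at most $C\,|\partial\Qn{n}|\,\exp(-C\log^4 s_0)\le C\exp(-C'\log^4 n)$.

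What is genuinely different from the $\limsup$ statement of Lemma~\ref{lShortArms3} is absorbing this last error term for \emph{every} $n$, and this is the step I expect to require the most care. I would use the crude but uniform lower bound $S_n\ge c\exp(-C\log^2 n)$, valid for all $n\ge1$, obtained by chaining the connection estimate of \cite[Lemma~1.1]{KN} to join $0$ to a single vertex of $\partial\Qn{n}$ within $\Qn{n}$. Since $\exp(-C'\log^4 n)/\exp(-C\log^2 n)\to0$, the error $C\exp(-C'\log^4 n)$ is at most $\frac13 S_n$ for all $n\ge n_0$, with $n_0=n_0(d)$ fixed, so together with the previous paragraph the $K$-irregular contribution is $\le\frac23 S_n$ for $n\ge n_0$. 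For each of the finitely many $n<n_0$ one instead uses that ${\mathbb P}_{p_c}(y~\mbox{is}~K\mbox{-irregular})\to0$ as $K\to\infty$ for each fixed $n$ and $y$, so enlarging $K$ makes $\sum_{y\in\partial\Qn{n}}{\mathbb P}_{p_c}(0\leftrightarrow y~\mbox{in}~\Qn{n},\ y~\mbox{is}~K\mbox{-irregular})\le\frac23 S_n$ directly. Choosing $K=K(d)$ large enough for both regimes then gives ${\mathbb E}_{p_c}X_n^{K-reg}=S_n-(\text{irregular part})\ge\frac13 S_n$, which is the claim; the core estimate — splitting over bad scales $s$, factorizing via disjoint boxes, and the FKG ``extension'' of connections — is otherwise a routine transcription of the proof of Lemma~\ref{lShortArms3}.
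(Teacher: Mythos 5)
Your proof is correct, and its core is the same as the paper's: both transcribe the argument of Lemma~\ref{lShortArms3} without the length constraint (split over bad scales $s\geq K$ via \cite[Claim~4.2]{KN}, factorize the connection event from the $s$-locally-bad event using disjoint edge sets, close up the connection at cost $C\exp(C\log^2 s)$ via \cite[Lemma~1.1]{KN} and FKG, and control the bad-event probability by \cite[Lemma~4.3]{KN}). Where you genuinely diverge is in absorbing the additive error term for every $n$: the paper invokes \cite[Lemma~3.1]{KN}, which states $\sum_{y\in\partial \Qn{n}}{\mathbb P}_{p_c}(0\leftrightarrow y~\mbox{in}~\Qn{n})\geq 1$ for all $n$, and observes that the error can be bounded uniformly in $n$ by $C\exp(-c\log^4 K)$ (in the regime $s_0<K$ the prefactor $|\partial\Qn{n}|$ is polynomial in $K$ and is swallowed by $\exp(-c\log^4 K)$), so a single large $K$ works for all $n$ with no case analysis. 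You instead compare the error $C\exp(-C'\log^4 n)$ with a crude lower bound $S_n\geq c\exp(-C\log^2 n)$ and treat the finitely many small $n$ separately; this is logically sound, and your chaining lower bound is of the same nature as the boundary-to-boundary estimate the paper already extracts from \cite[Lemma~1.1]{KN} plus FKG, though it is an extra step needing its own (routine) justification. Note also that you could simplify your route using only ingredients already in the paper: since any path from $0$ to $\partial\Qn{n}$ stays in $\Qn{n}$ until it first hits $\partial\Qn{n}$, one has $S_n\geq {\mathbb P}_{p_c}(0\leftrightarrow\partial\Qn{n})\geq cn^{-2}$ by \eqref{prop1Arm}, which beats $\exp(-C'\log^4 n)$ just as well and avoids the chaining argument; the paper's appeal to \cite[Lemma~3.1]{KN} is simply the cleanest version of this uniform lower bound.
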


\begin{proof}
The proof of this lemma is very similar to the proof of Lemma~\ref{lShortArms3}, but simpler.
In the same way as in the proof of Lemma~\ref{lShortArms3} and with the same choice of $K$, we bound
    \[
    \sum_{y\in\partial \Qn{n}}
    {\mathbb P}_{p_c}\left(0\leftrightarrow y~\mbox{in}~\Qn{n}, y~\mbox{is not}~K\mbox{-regular}\right)
    \leq
    \frac{1}{2}\sum_{y\in\partial \Qn{n}}
    {\mathbb P}_{p_c}\left(0\leftrightarrow y~\mbox{in}~\Qn{n}\right)
    +
    C_{13} \exp\left(-C_{14}\log^4 K\right).
    \]
We then use the result of \cite[Lemma~3.1]{KN}: For all positive integers $n$,
    \[
    \sum_{y\in\partial \Qn{n}}{\mathbb P}_{p_c}(0\leftrightarrow y~\mbox{in}~\Qn{n}) \geq 1.
    \]
We increase $K$ if necessary to fulfill the bound $C_{13} \exp\left(-C_{14}\log^4 K\right) < 1/6$.
The result follows.
\end{proof}

Theorem~\ref{thm:ConnInsideBalls}(a) follows in a straightforward way
from \eqref{EYnKL-bd}, Lemma \ref{lem-X-reg-lb} and the fact that
    \[
    {\mathbb E}_{p_c}Y(n,K,n) \leq \sum_{x\in \Qn{2n}} {\mathbb P}_{p_c}(0\leftrightarrow x) \leq C_{16} n^2.
    \]
The last inequality follows from (\ref{propSizeCn}).
\qed

\medskip

\subsection{Proof of Theorem~\ref{thm:ConnInsideBalls}(b)}
\label{sec:ShortArms}

Let
    \[
    F(\varepsilon)
    =
    \limsup_{n\to\infty}\sum_{y\in\partial \Qn{n}}{\mathbb P}_{p_c}\left(0\stackrel{\leq \varepsilon n^2}\longleftrightarrow y~\mbox{in}~\Qn{n}\right).
    \]
Theorem~\ref{thm:ConnInsideBalls}(a) implies that there exists a finite constant $C_{17}$ such that $F(\varepsilon) \leq C_{17}$ for all $\varepsilon>0$.
It follows from (\ref{eqYUpperBound}) and Lemma~\ref{lShortArms3} that for all $\varepsilon>0$,
    \[
    F(\varepsilon) \leq \frac{C_3}{C_4}\sqrt\varepsilon + \frac{1}{2}F(2\varepsilon).
    \]
We apply the above inequality $k$ times to get
    \[
    F(\varepsilon) \leq C_{18}\sqrt\varepsilon + \frac{1}{2^k}F(2^k\varepsilon)
    \leq C_{18}\sqrt\varepsilon + \frac{C_{17}}{2^k},
    \]
with $C_{18} = C_3\sqrt 2/C_4(\sqrt 2 - 1)$ and where we use that $F(2^k\varepsilon)\leq C_{17}$ by
Theorem~\ref{thm:ConnInsideBalls}(a). This inequality holds for any fixed $k$.
We complete the proof of Theorem~\ref{thm:ConnInsideBalls}(b) by taking $k$ such that $2^k\sqrt\varepsilon > 1$.
\qed

\medskip

\section{Proof of Theorem~\ref{thm:ShortConn:r}}\label{sec:ShortConn:r}

\begin{proof}[Proof of \eqref{eq:ShortConn:r}]
Let $k$ and $r$ be positive integers and $z\in\Z^d$. For brevity, we write $r/4$ instead of $\lfloor r/4\rfloor$.
It suffices to prove the result for all large enough $k$.
By Theorem~\ref{thm:ConnInsideBalls}(b), there exist $A$ and $K$ such that for any $k\geq K$, 
	\begin{equation}\label{eq:AK}
	\sum_{y\in\partial \Qn{A\sqrt{k}}}
	{\mathbb P}_{p_c}\left(0\stackrel{\leq k}\longleftrightarrow y~\mbox{in}~\Qn{A\sqrt{k}}\right) 
	\leq \frac 12 .\
	\end{equation}
Indeed, fix $\varepsilon>0$ such that the right hand side of \eqref{eq:ConnInsideBalls(b)} is strictly smaller than $1/2$. 
Then by \eqref{eq:ConnInsideBalls(b)}, inequality \eqref{eq:AK} holds for all large $k$ with $A = 1/\sqrt{\varepsilon}$. 

From now on we assume that $k\geq K$.
We first consider the case $A\sqrt{k} \leq r/8$. 
We write 
	\begin{eqnarray*}
	\sum_{x\in\Z^d,~x\stackrel{r}\sim z,~|x|\geq r/4}
	{\mathbb P}_{\sZ, p_c}\left(0\stackrel{\leq k}\longleftrightarrow x\mbox{ in } \Z^d\right)
	&\leq
	&\sum_{n=0}^\infty \sum_{x\stackrel{r}\sim z,~r/4 + rn\leq |x|<r/4 + r(n+1)}
	{\mathbb P}_{\sZ, p_c}\left(0\stackrel{\leq k}\longleftrightarrow x\mbox{ in } \Z^d\right) \\
	&\leq
	&\sum_{n=0}^\infty C_1 (n+1)^{d-1} \sup_{r/4 + rn\leq |x|<r/4 + r(n+1)}
	{\mathbb P}_{\sZ, p_c}\left(0\stackrel{\leq k}\longleftrightarrow x\mbox{ in } \Z^d\right) .
	\end{eqnarray*}
Here we use the fact that, uniformly in $r$, the number of $x\stackrel{r}\sim z$ such that
$r/4 + rn\leq |x|<r/4 + r(n+1)$ is of order $(n+1)^{d-1}$.

Take $n\geq 0$ and $x\in\Z^d$ with $r/4 + rn\leq |x|<r/4 + r(n+1)$.
Let $M_n = \lfloor \frac{r/8 + rn}{A\sqrt{k}} \rfloor \geq 1$,
since $A\sqrt{k} \leq r/8$.  

Note that the event $\{0\stackrel{\leq k}\longleftrightarrow x\mbox{ in }\Z^d\}$ implies the existence of
$x_1,\ldots, x_{M_n}$ such that for all $i\in\{1,\ldots,M_n\}$, 
$x_i\in \partial \Qn{A\sqrt{k}}(x_{i-1})$ (where we assume $x_0 = 0$)
and the following connections all occur disjointly:
	\[
	\{x_{i-1}\stackrel{\leq k}\longleftrightarrow x_i\mbox{ in }\Qn{A\sqrt{k}}(x_{i-1})\}, 
	\quad i\in\{1,	\ldots,M_n\},
	\quad
	\mbox{and}
	\quad
	\{x_{M_n}\leftrightarrow x\} .\
	\]
By the BK inequality, translation invariance, \eqref{eq:AK}, the fact that $|x_{M_n} - x| \geq r/8$, and \eqref{prop2pf}, for any $n\geq 0$, 
	\[
	\sup_{r/4 + rn\leq |x|<r/4 + r(n+1)}
	{\mathbb P}_{\sZ, p_c}\left(0\stackrel{\leq k}\longleftrightarrow x\mbox{ in } \Z^d\right)
	\leq C_2 r^{2-d} \left(\frac 12\right)^{M_n} .\
	\]
Putting all the bounds together and using that $M_n\geq n+M_0$ (since $A\sqrt{k} \leq r/8$), 
we have in the case 
$A\sqrt{k} \leq r/8$ that 
	\[
	\sum_{x\in\Z^d,~x\stackrel{r}\sim z,~|x|\geq r/4} 
	{\mathbb P}_{\sZ, p_c}\left(0\stackrel{\leq k}\longleftrightarrow x\mbox{ in } \Z^d\right)
	\leq
	C_1C_2 r^{2-d}\sum_{n=0}^\infty (n+1)^{d-1}\left(\frac 12\right)^{M_n}
	\leq 
	C_3 r^{2-d} 2^{-\frac{r}{8A\sqrt{k}}} .\
	\]
This finishes the proof of \eqref{eq:ShortConn:r} in the case $A\sqrt{k} \leq r/8$. 
\medskip

It remains to consider the case $A\sqrt{k} > r/8$.
We write 
	\begin{eqnarray*}
	\sum_{x\in\Z^d,~x\stackrel{r}\sim z,~|x|\geq r/4} 
	{\mathbb P}_{\sZ, p_c}\left(0\stackrel{\leq k}\longleftrightarrow x\mbox{ in } \Z^d\right)
	&\leq
	&\sum_{x\in\Qn{8A\sqrt{k}},~x\stackrel{r}\sim z,~|x|\geq r/8} 
	{\mathbb P}_{\sZ, p_c}\left(0\leftrightarrow x\mbox{ in } \Z^d\right)\\
	&+
	&\sum_{n=8}^\infty \sum_{x\stackrel{r}\sim z,~nA\sqrt{k}< |x|\leq (n+1)A\sqrt{k}} 
	{\mathbb P}_{\sZ, p_c}\left(0\stackrel{\leq k}\longleftrightarrow x\mbox{ in } \Z^d\right) .\
	\end{eqnarray*}
Using \eqref{propSizeCnr}, the first sum can be bounded from above by $C_4 k/r^d$. 

Take $n\geq 8$ and $x\in\Z^d$ with $nA\sqrt{k}< |x|\leq (n+1)A\sqrt{k}$.
Note that the event $\{0\stackrel{\leq k}\longleftrightarrow x\mbox{ in }\Z^d\}$ implies the existence of
$x_1,\ldots, x_{n-1}$ such that for all $i\in\{1,\ldots,n-1\}$, 
$x_i\in \partial \Qn{A\sqrt{k}}(x_{i-1})$ (where we assume $x_0 = 0$)
and the following connections all occur disjointly:
	\[
	\{x_{i-1}\stackrel{\leq k}\longleftrightarrow x_i\mbox{ in }\Qn{A\sqrt{k}}(x_{i-1})\}, 
	\qquad i\in\{1,	\ldots,n-1\},
	\qquad
	\mbox{and}
	\qquad
	\{x_{n-1}\leftrightarrow x\} .\
	\]
By the BK inequality,
	\begin{multline*}
	\sum_{x\stackrel{r}\sim z,~nA\sqrt{k}< |x|\leq (n+1)A\sqrt{k}} 
	{\mathbb P}_{\sZ, p_c}\left(0\stackrel{\leq k}\longleftrightarrow x\mbox{ in } \Z^d\right) \\
	\leq
	\sum_{x_1,\ldots,x_{n-1}}\prod_{i=1}^{n-1}
	{\mathbb P}_{\sZ, p_c}\left(x_{i-1}\stackrel{\leq k}\longleftrightarrow x_i\mbox{ in }
	\Qn{A\sqrt{k}}(x_{i-1})\right)
	\sum_{x\stackrel{r}\sim z,~nA\sqrt{k}< |x|\leq (n+1)A\sqrt{k}}
	{\mathbb P}_{\sZ, p_c}\left(x_{n-1}\leftrightarrow x\mbox{ in } \Z^d\right) .\
	\end{multline*}
Note that for any choice of $x_1,\ldots, x_{n-1}$ as above, we have $r/8< A\sqrt{k} \leq |x_{n-1} - x| \leq 2nA\sqrt{k}$. 
Therefore, by translation invariance and \eqref{propSizeCnr}, we obtain 
	\begin{eqnarray*}
	\sum_{x\stackrel{r}\sim z,~nA\sqrt{k}< |x|\leq (n+1)A\sqrt{k}}
	{\mathbb P}_{\sZ, p_c}\left(x_{n-1}\leftrightarrow x\mbox{ in } \Z^d\right)
	&\leq
	&\sum_{x\in\Qn{2nA\sqrt{k}},~x\stackrel{r}\sim z-x_{n-1},~|x|\geq r/8}
	{\mathbb P}_{\sZ, p_c}\left(0\leftrightarrow x\mbox{ in } \Z^d\right)\\
	&\leq 
	&\frac{C_5k}{r^d} .\
	\end{eqnarray*}
Putting all the bounds together and using translation invariance and \eqref{eq:AK}, we get
	\[
	\sum_{x\in\Z^d,~x\stackrel{r}\sim z,~|x|\geq r/4} 
	{\mathbb P}_{\sZ, p_c}\left(0\stackrel{\leq k}\longleftrightarrow x\mbox{ in } \Z^d\right)
	\leq
	\frac{C_4k}{r^d} + 
	\sum_{n=8}^\infty \frac{C_5k}{r^d} \left(\frac 12\right)^{n-1}
	\leq 
	\frac{C_6k}{r^d} .\
	\]
This finishes the proof of \eqref{eq:ShortConn:r} in the case $A\sqrt{k}>r/8$. 
\end{proof}

\medskip

\begin{proof}[Proof of \eqref{eq:ShortConn:3r}]
We bound the sum over $u$, $v$, and $w$ by distinguishing three cases: $|u|\geq r/4$, $|v-w|\geq r/4$, and the remaining term. 

If $|v-w|\geq r/4$, then \eqref{eq:ShortConn:3r} follows by applying \eqref{eq:ShortConn:r}. 
Indeed, by \eqref{eq:ShortConn:r} we get one factor of $k/r^d$ from summing over $w$, and
by \eqref{KN-ball} two factors of $k$ from the remaining two sums.

If $|u|\geq r/4$, we let $u'=u+w$ and $v'=v+w$, and replace the sums over $u$ and $v$ by sums 
over $u'$ and $v'$ and use translation invariance 
to obtain that \eqref{eq:ShortConn:3r} over this range equals
	\begin{equation}\label{eq:ShortConn:3r-rep}
	\sum_{w\stackrel{r}\sim z,~|w|\geq 3r/4}\sum_{u',v'\colon |u'-w|\geq r/4}
	{\mathbb P}_{\sZ, p_c}\left(w\stackrel{\leq k}\longleftrightarrow u'\mbox{ in } \Z^d\right)
	{\mathbb P}_{\sZ, p_c}\left(u'\stackrel{\leq k}\longleftrightarrow v'\mbox{ in } \Z^d\right)
	{\mathbb P}_{\sZ, p_c}\left(0\stackrel{\leq k}\longleftrightarrow v'\mbox{ in } \Z^d\right)
	\leq \frac{Ck^3}{r^d} ,\
	\end{equation}
where we use that $|u'-w|=|u|\geq r/4$, together with \eqref{eq:ShortConn:r} and 
\eqref{KN-ball}.

It remains to bound the sum over all $u$, $v$, and $w$ with $w\stackrel{r}\sim z$, $|w|\geq 3r/4$, 
$|u|<r/4$, and $|v-w|<r/4$, which we denote by ${\sum}'$.
By the triangle inequality, we have $|u-v|\geq r/4$. 
We write 
\begin{multline*}
{\sum}'
{\mathbb P}_{\sZ, p_c}\left(0\stackrel{\leq k}\longleftrightarrow u\mbox{ in } \Z^d\right)
{\mathbb P}_{\sZ, p_c}\left(u\stackrel{\leq k}\longleftrightarrow v\mbox{ in } \Z^d\right)
{\mathbb P}_{\sZ, p_c}\left(v\stackrel{\leq k}\longleftrightarrow w\mbox{ in } \Z^d\right)\\
=
{\sum}'
{\mathbb P}_{\sZ, p_c}\left(0\stackrel{\leq k}\longleftrightarrow u\mbox{ in } \Z^d\right)
{\mathbb P}_{\sZ, p_c}\left(u\stackrel{\leq k}\longleftrightarrow v\mbox{ in } \Z^d\right)
{\mathbb P}_{\sZ, p_c}\left(0\stackrel{\leq k}\longleftrightarrow v-w\mbox{ in } \Z^d\right) .\
\end{multline*}
Note that $v\stackrel{r}\sim v-w+z$ and $|u-v|\geq r/4$. 
With the change of variables $(v,x) = (v,v-w+z)$, we observe that the above sum is bounded from above by 
\[
\sum_{u,x}\sum_{v\stackrel{r}\sim x,~|v-u|\geq r/4}
{\mathbb P}_{\sZ, p_c}\left(0\stackrel{\leq k}\longleftrightarrow u\mbox{ in } \Z^d\right)
{\mathbb P}_{\sZ, p_c}\left(u\stackrel{\leq k}\longleftrightarrow v\mbox{ in } \Z^d\right)
{\mathbb P}_{\sZ, p_c}\left(0\stackrel{\leq k}\longleftrightarrow x\mbox{ in } \Z^d\right) .\
\]
Applying \eqref{eq:ShortConn:r} to the sum over $v$, and then \eqref{KN-ball} to the sums over $u$ and $x$, we obtain that 
the above sum is bounded from above by $(C_7k/r^d)\cdot C_7k\cdot C_7k$. 
Putting all the cases together, we arrive at \eqref{eq:ShortConn:3r}. 
\end{proof}

\section{Proof of Theorem \ref{thmTwoPointTorus}}\label{sec:thmTwoPointTorus}

\begin{proof}[Proof of \eqref{eq:torus2pf}]
Take $x\in\mathbb T_r^d$. Let $k = \lfloor V^{1/3}\rfloor$. We write
\[
{\mathbb P}_{p_c}\left(0\leftrightarrow x~\mbox{in}~\mathbb T_r^d\right)
=
{\mathbb P}_{p_c}\left(0\stackrel{\leq 3k}\longleftrightarrow x~\mbox{in}~\mathbb T_r^d\right) 
+ 
{\mathbb P}_{p_c}\left(\{0\leftrightarrow x~\mbox{in}~\mathbb T_r^d\}\setminus 
\{0\stackrel{\leq 3k}\longleftrightarrow x~\mbox{in}~\mathbb T_r^d\}\right) .\
\]
It follows from Proposition~\ref{prCoupling}, \eqref{eq:ShortConn:r}, and the choice of $k$ that
\[
{\mathbb P}_{p_c}\left(0\stackrel{\leq 3k}\longleftrightarrow x~\mbox{in}~\mathbb T_r^d\right) 
\leq 
\tau_{\sZ,p_c}(0,x) + C_1 V^{-2/3} .\
\]

Note that the event 
$\{0\leftrightarrow x~\mbox{in}~\mathbb T_r^d\}\setminus \{0\stackrel{\leq 3k}\longleftrightarrow x~\mbox{in}~\mathbb T_r^d\}$ 
implies that 
(a) $\partial B_{\sT,k}(0)\neq \varnothing$, (b) $\partial B_{\sT,k}(x)\neq \varnothing$, and
(c) $B_{\sT,k}(0)$ and $B_{\sT,k}(x)$ do not intersect.
Let $G'$ be the subgraph of $\torus$ obtained by removing all edges needed to calculate $B_{\sT,k}(x)$. Note that the events (a)-(c) imply that $\partial B_{\sT,k}(x)\neq \varnothing$ and 
$\partial B_{\sT,k}^{\sss G'}(0)\neq \varnothing$.
By \eqref{KN-intrinsic},
	\[
	\sup_{G} {\mathbb P}_{p_c}\left(\partial B_{\sT,k}^{\sss G}(0)\neq \varnothing\right)
	\cdot {\mathbb P}_{p_c}\left(\partial B_{\sT,k}(x)\neq \varnothing\right)
	\leq
	\left(C_2/k\right)^2  \leq C_3 V^{-2/3} .\    
	\]
This completes the proof of \eqref{eq:torus2pf}.
\end{proof}

\medskip

\begin{proof}[Proof of \eqref{eq:torus2pf:n}]
Let $x\in\torus$. Let $n$ be a positive integer smaller than $r/2$.
We distinguish two cases: $|x| < 2n/3$ and $|x| \geq 2n/3$.
In the first case, we observe that the event $\{0\leftrightarrow x\mbox{ by a path which visits }\partial \Qn{n}\}$ implies that
there exists a vertex $y\in\partial \Qn{n}$ such that
\[
\{0\leftrightarrow y\mbox{ in }\Qn{n}\}
\circ
\{y\leftrightarrow x\} .\
\]
By the BK inequality, Theorem~\ref{thm:ConnInsideBalls}(a) and \eqref{eq:torus2pf},
    \eqan{
    &{\mathbb P}_{p_c}(0\leftrightarrow x~\mbox{in}~\mathbb T_r^d\mbox{ by a path which visits }\partial \Qn{n})\\
    &\qquad \leq
    \sum_{y\in \partial \Qn{n}} {\mathbb P}_{p_c}(0\leftrightarrow y\mbox{ in }\Qn{n})\cdot
    {\mathbb P}_{p_c}(y\leftrightarrow x\mbox{ in }\mathbb T_r^d)
    \leq
    C_4(\sup_{y\in\partial \Qn{n}}\tau_{\sZ,p_c}(y,x) + C_5 V^{-2/3}) .\nonumber
    }
Since $|x| < 2n/3$, the distance between $x$ and any $y\in\partial\Qn{n}$ is at least $n/3$. Therefore,
by \eqref{prop2pf}, we have $\tau_{\sZ,p_c}(y,x) \leq C_6 n^{2-d}$, 
and \eqref{eq:torus2pf:n} follows.

\medskip

In the case $|x| \geq 2n/3$, we simply use the bound 
\[
{\mathbb P}_{p_c}(0\leftrightarrow x~\mbox{in}~\mathbb T_r^d\mbox{ by a path which visits }\partial \Qn{n})
\leq
{\mathbb P}_{p_c}(0\leftrightarrow x~\mbox{in}~\mathbb T_r^d)
\stackrel{\eqref{eq:torus2pf}}\leq 
\tau_{\sZ,p_c}(0,x) + C_5V^{-2/3} .\
\]
Since $|x| \geq 2n/3$, we obtain by \eqref{prop2pf} that 
$\tau_{\sZ,p_c}(0,x) \leq C_7 n^{2-d}$, 
and \eqref{eq:torus2pf:n} follows.
\end{proof}

\paragraph{Acknowledgements.}
We thank Asaf Nachmias for his comments on parts of the manuscript.
The work of RvdH and AS was supported in part by the Netherlands
Organisation for Scientific Research (NWO).
The work of AS was further supported by a grant of the `Excellence Fund'
of Eindhoven University of Technology, as well as by the grant ERC-2009-AdG 245728-
RWPERCRI.

\end{document}